\DeclareMathAlphabet{\mathpzc}{OT1}{pzc}{m}{it}
\newtheorem{theorem}{Theorem}[section]
\newtheorem{lemma}[theorem]{Lemma}
\newtheorem{proposition}[theorem]{Proposition}
\theoremstyle{definition}
\newtheorem{remark}[theorem]{Remark}
\newtheorem*{assA*}{Assumption A}
\numberwithin{equation}{section}
\newcommand{\Ee}{\mathds{E}}
\newcommand{\real}{\mathds{R}}
\newcommand{\rd}{\mathds{R}^d}
\newcommand\II{\mathds{1}}
\newcommand{\dif}{\mathrm{d}}
\newcommand\iup{\mathrm{i}}
\newcommand\eup{\mathrm{e}}
\begin{document}

\title[Approximation of the invariant measure of stable SDEs by an EM scheme]
{Approximation of the invariant measure of stable SDEs by an Euler--Maruyama scheme}

\author[P.~Chen]{Peng Chen}
\address[P.~Chen]{School of Mathematics, Nanjing University of Aeronautics and
Astronautics, Nanjing 211106, China}
\email{chenpengmath@nuaa.edu.cn}

\author[C.-S.~Deng]{Chang-Song Deng}
\address[C.-S.~Deng]{School of Mathematics and Statistics, Wuhan University, Wuhan 430072, China}
\email{dengcs@whu.edu.cn}

\author[R.L.~Schilling]{Ren\'{e} L.\ Schilling}
\address[R.L.\ Schilling]{%
TU Dresden, Fakult\"{a}t Mathematik, Institut f\"{u}r Ma\-the\-ma\-ti\-sche Sto\-cha\-stik, 01062 Dresden, Germany}
\email{rene.schilling@tu-dresden.de}

\author[L.~Xu]{Lihu Xu}
\address[L.~Xu]{1. Department of Mathematics, Faculty of Science and Technology, University of Macau, Macau S.A.R., China. 2. Zhuhai UM Science \& Technology Research Institute, Zhuhai, China}
\email{lihuxu@um.edu.mo}

\keywords{Euler-Maruyama method; invariant measure; convergence rate; Wasserstein distance.}

\makeatletter
\@namedef{subjclassname@2020}{%
  \textup{2020} Mathematics Subject Classification}
\makeatother

\subjclass[2020]{60H10; 37M25; 60G51; 60H07; 60H35; 60G52.}

\begin{abstract}
    We propose two Euler-Maruyama (EM) type numerical schemes in order to approximate the invariant measure of a stochastic differential equation (SDE) driven by an $\alpha$-stable L\'evy process ($1<\alpha<2$): an approximation scheme with the $\alpha$-stable distributed noise
    and a further scheme with Pareto-distributed noise. Using a discrete version of Duhamel's principle and Bismut's formula in Malliavin calculus, we prove that the error bounds in Wasserstein-$1$ distance are in the order of $\eta^{1-\epsilon}$ and $\eta^{\frac2{\alpha}-1}$, respectively, where $\epsilon \in (0,1)$ is arbitrary and $\eta$ is the step size of the approximation schemes. For the Pareto-driven scheme, an explicit calculation for Ornstein--Uhlenbeck $\alpha$-stable process shows that the rate $\eta^{\frac2{\alpha}-1}$ cannot be improved.
\end{abstract}

\maketitle

\section{Introduction}
We study the solution $(X_t)_{t \ge 0}$ of the following stochastic differential equation (SDE) driven by an $\alpha$-stable L\'evy process:
\begin{align}\label{SDE}
    \dif X_t = b(X_t)\,\dif t+\dif Z_{t},\quad X_0=x,
\end{align}
where $x \in \rd$ is the starting point, $(Z_t)_{t \ge 0}$ is a $d$-dimensional, rotationally invariant $\alpha$-stable L\'evy process with index $\alpha \in (1,2)$, and $b: \rd \to \rd$ is a function satisfying \textbf{Assumption A} below.

The Euler-Maruyama (EM) scheme of the SDE \eqref{SDE}, with a step size  $\eta\in(0,1)$, is defined by
\begin{align}\label{EM2}
    Y_0 = x,\quad Y_{k+1} = Y_{k} + \eta b(Y_{k})+(Z_{(k+1)\eta}-Z_{k\eta}), \quad k=0,1,2,\dots,
\end{align}
see, e.g.\ \cite{TA18,KS19}. It is easy to see that $(Y_k)_{k \ge 0}$ is a Markov chain. A drawback of the scheme \eqref{EM2} is that there is no explicit representation for the probability density of $\alpha$-stable noise $Z_{(k+1)\eta}-Z_{k\eta}$, $\alpha\in(1,2)$, making the numerical simulation is complicated and numerically expensive, see the very recent monograph \cite[Section 1.9]{Nolbook} for a detailed discussion about the difficulties arising in the multivariate stable distribution simulations. See also \cite{CMS76,MN94,Nol08} for sampling stable distributed random variables. In contrast, the Pareto distribution has a simple probability density and thus can be easily sampled by the classical acceptance and rejection method. Since the stable and the Pareto distribution have the same tail behaviour, and inspired by the stable central limit theorem (see, e.g.\ \cite{Hal81,CNXY19}), we replace the stable noise in \eqref{EM2} with a Pareto distributed noise, and consider the following EM scheme:

Let $\widetilde{Z}_{1},\widetilde{Z}_{2},\cdots$ be an iid sequence of $d$-dimensional random vectors, which are Pareto distributed, i.e.\
\begin{align}\label{density}
    \widetilde Z_1 \sim p(z) = \frac{\alpha}{\sigma_{d-1}|z|^{\alpha+d}}\,
    \II_{(1,\infty)}(|z|);
\end{align}
we denote by $\sigma_{d-1} = {2\pi^{\frac{d}{2}}} / {\Gamma(\frac{d}{2})}$ the surface area of the unit sphere $\mathds{S}^{d-1}\subset\rd$. We will approximate the SDE \eqref{SDE} by the following approximation scheme:
\begin{align}\label{EM}
    \tilde{Y}_0 = x,\quad \tilde{Y}_{k+1} = \tilde{Y}_{k} + \eta b(\tilde{Y}_{k}) + \frac{\eta^{1/\alpha}}{\sigma}\,\widetilde{Z}_{k+1}, \quad k=0,1,2,\dots,
\end{align}
where $\eta>0$ is the step size, $\sigma^\alpha = \alpha /(\sigma_{d-1} C_{d,\alpha})$, and
\begin{gather}\label{normalisation}
    C_{d,\alpha}
    = |\xi|^\alpha\left(\int_{\rd\setminus\{0\}}\left(1-\cos\langle\xi,y\rangle\right) \,\frac{\dif y}{|y|^{\alpha+d}}\right)^{-1}
    = \alpha 2^{\alpha-1} \pi^{-d/2} \frac{\Gamma\left(\frac{d+\alpha}2\right)}{\Gamma\left(1-\frac\alpha 2\right)},
\end{gather}
see e.g. \cite[Example 2.4.d)]{BSW} and \cite[III.18.23]{BF75}. It is easy to see that $(\tilde{Y}_k)_{k \ge 0}$ is a Markov chain.

We aim to study the error bounds in the Wasserstein-$1$ distance for the above two schemes, in particular for large time.

\subsection{Motivation, contribution and method}
The EM approximation of SDEs is a {classical} research topic, both in probability theory and in numerical analysis, and over the past decades there have been many contributions, see for instance \cite{BT96,FG16, Lem07,Sha18,DG20,PP20} for SDEs driven by a Brownian motion, and \cite{JMW96,PT97,Jac04,PT17,MX18,KS19} for SDEs driven by L\'evy noise. Most of these papers focus on error bounds of the solution to the SDE and the EM approximation in a time interval $[0,T]$ for some finite $T>0$; typically, there appears a constant $C_T$ (depending on $T$) in the error bounds, which tends to $\infty$ as $T \to \infty$.

The recent use of Langevin samplings in machine learning, has caused a surge of interest error bounds for the invariant measures of the solution of the SDE and of the EM discretization, see e.g.\ \cite{NSR19,SSG19,ZFM20,CLX21,JPXX21}.
We refer the reader to \cite{LMYY18, SZ21, Tal90} for discrete schemes for the invariant measure of SDEs driven by a Brownian motion.
Panloup \cite{Pan08} uses certain recursive procedures to compute the invariant measure of L\'evy-driven SDEs, but he does not determine the convergence rate. To the best of our knowledge, our paper is the first contribution studying the bound between the invariant measures of solutions to SDEs driven by stable noise and their EM discretizations.

A further motivation of our research is to show that the EM scheme with Pareto distributed innovations can indeed be used to approximate the invariant measures of SDEs driven by an $\alpha$-stable noise with $\alpha \in (1,2)$. In order to speed up the EM scheme, actual implementations of the discretization \eqref{EM}, use iid random variables $(\widetilde Z_k)_{k \ge 1}$ with Pareto distribution rather than stable innovations. The advantage of this approach is that the Pareto distribution has an explicitly given density (see \eqref{density}) which allows for a much simpler sampling than stable random variables. We also show that the convergence rate $\eta^{2/\alpha-1}$ is optimal for the Ornstein--Uhlenbeck process on $\real$.

For $\alpha=2$, the stable process $Z_t$ is (essentially) a $d$-dimensional standard Brownian motion and the convergence rate for the corresponding invariant measure is $\sqrt{\eta}$ (up to a logarithmic correction), see for instance \cite{FSX19}. Our optimal rate $\eta^{\frac 2{\alpha}-1}$ will tend to $O(1)$ rather than $\sqrt{\eta}$ as $\alpha \uparrow 2$; this type of ``phase transition'' has been observed in many situations, e.g.\  in the stable law CLT \cite{Hal81,Xu19}. This is due to the fact that $\alpha$-stable distributions with $\alpha \in (0,2)$ do not have second moments, while the $2$-stable distribution is the Gaussian law having arbitrary moments.

Our approach in proving the main results is via a discrete version of Duhamel principle and Bismut's formula in Malliavin calculus. More precisely, we split the stochastic process $(X_t)_{t \ge 0}$ into smaller pieces $(X_t)_{(k-1) \eta \le t \le k \eta}$ for $k \ge 1$ and replace $(X_t)_{(k-1) \eta \le t \le k \eta}$ with $\tilde{Y}_{k}$ and $Y_k$, respectively.  This procedure is reminiscent to Lindeberg's method for the CLT. In order to bound the error caused by these replacements, we use the semigroup $P_t$ given by $(X_t)_{t \ge 0}$ and study its regularity using Malliavin's calculus for jump processes. In order to bound the second-order derivative of $P_t$, we need to adopt the framework of the time-change argument established in \cite{Z} and use the Bismut formula.

\subsection{Notation}
Whenever we want to emphasize the starting point $X_0=x$ for a given $x \in \rd$, we will write $X_t^x$ instead of $X_t$; we use this also for $Y_k^y$ and $\tilde Y^y_k$ for a given $y \in \rd$. By $P_t,$ $Q_k$ and $\tilde{Q}_k$ we denote the Markov semigroups of $X_t$, $Y_k$ and $\tilde{Y}_k$, respectively, i.e.\
\begin{gather*}
    P_tf(x) = \Ee f(X_t^x),
    \quad
    Q_kf(x) = \Ee f(Y_k^{x}),
    \quad\text{and}\quad
    \tilde{Q}_kf(x) = \Ee f(\tilde{Y}_k^{x}).
\end{gather*}
for a bounded measurable function $f:\rd\to\real$, $x\in\rd$, $t\geq 0$ and $k=0,1,2,\dots$.

As usual, $\mathcal{C}(\rd,\real)$ denote the continuous functions $f: \rd \to \real$, and $\mathcal{C}^2(\rd,\real)$
[$\mathcal{C}_b^2(\rd,\real)$] are the twice continuously differentiable functions [which are bounded together with all their derivatives];  $\nabla f(x)\in \rd$ and $\nabla^2 f(x)\in \real^{d \times d}$ are the gradient and the Hessian. For $v, v_1, v_2,x \in \rd$, the directional derivatives are given by
\begin{align*}
    \nabla_v f(x)
    &= \left\langle \nabla f(x), v\right\rangle
    = \lim_{\varepsilon \to 0} \frac{f(x+\varepsilon v)-f(x)}{\varepsilon},\\
    \nabla_{v_2} \nabla_{v_1} f(x)
    &= \left\langle \nabla^2 f(x), v_1 v^\top_2\right\rangle_{\mathrm{HS}}
    = \lim_{\varepsilon \to 0} \frac{\nabla_{v_1}f(x+\varepsilon v_2)-\nabla_{v_1}f(x)}{\varepsilon},
\end{align*}
where $\left\langle A, B\right\rangle_{\mathrm{HS}}:=\sum_{i,j=1}^d A_{ij} B_{ij}$ for $A, B \in \real^{d \times d}$. The Hilbert-Schmidt norm of a matrix $A\in \real^{d \times d}$ is $\|A\|_{\mathrm{HS}}=\sqrt{\sum_{i,j=1}^dA_{ij}^2}$.

The directional derivatives are similarly defined for (sufficiently smooth) vector-valued functions $f=(f_{1},f_{2},\cdots,f_{d})^{\top}: \rd\to \rd$:
let $v, v_1, v_2,x \in \rd$, then
$\nabla_{v}f(x)=(\nabla_{v}f_{1},\nabla_{v},\dots,\nabla_{v}f_{d})^{\top}$,
$\nabla_{v_{2}}\nabla_{v_{1}}f(x)= (\nabla_{v_{2}}\nabla_{v_{1}}f_{1},\dots,\nabla_{v_{2}} \nabla_{v_{1}}f_{d})^{\top}$.

For $f \in \mathcal{C}_b^2(\rd,\real)$, we will use the supremum and the supremum Hilbert-Schmidt norm
\begin{gather*}
    \|\nabla f\|_{\infty} =
    \sup_{x\in\rd}|\nabla f(x)|,
\quad
    \|\nabla^2f\|_{\mathrm{HS},\infty} =
    \sup_{x\in\rd}\|\nabla^2f(x)\|_{\mathrm{HS}}.
\end{gather*}

The Wasserstein-$1$ distance between two probability measures $\mu_{1}$ and $\mu_{2}$ on $\rd$ is defined as
\begin{align}\label{deW1}
    W_{1}(\mu_{1},\mu_{2})=\inf_{(X,Y)\in\mathsf{C}(\mu_{1},\mu_{2})}\Ee |X-Y|,
\end{align}
where $\mathsf{C}(\mu_{1},\mu_{2})$ is the set of all coupling realizations of $\mu_1, \mu_2$, i.e.\ all random variables with values in $\real^{2d}$ with marginals $\mu_{1},\mu_{2}$. We also have the following dual description of the Wasserstein distance
\begin{gather*}
    W_{1}(\mu_{1},\mu_{2})=\sup_{h \in \mathrm{Lip}(1)} |\mu_{1}(h)-\mu_{2}(h)|,
\end{gather*}
where $\mathrm{Lip}(1)=\{h: \rd \to \real; \ |h(y)-h(x)| \le |y-x|\}$ and $\mu_{i}(h)=\int_{\real} h(x)\,\mu_{i}(\dif x)$, $i=1,2$.

We will frequently need the following weight function
\begin{gather*}
    V_{\beta}(x)=(1+|x|^{2})^{\beta/2}, \quad x \in \rd, \; \beta\geq 0.
\end{gather*}

Finally, we write $\lfloor x\rfloor$ for the largest integer which is less than or equal to $x\in\real$, and throughout $C_{d,\alpha}$ is the constant \eqref{normalisation}.

\subsection{Assumptions and main results}

Throughout this paper, we make the following assumption:
\begin{assA*}
The function $b:\rd\to\rd$ is twice continuously differentiable and there exist constants $\theta_{1},\theta_{2} > 0$ and $\theta_{3},K \geq 0$ such that
\begin{align}\label{diss}
    \left\langle b(x)-b(y),x-y\right\rangle\leq-\theta_{1}|x-y|^{2}+K \quad \forall x,y\in\rd
\intertext{and}\label{gra}
    |\nabla_{v} b(x)| \leq \theta_{2}\left|v\right|, \quad
    |\nabla_{v_{1}}\nabla_{v_{2}}b(x)| \leq \theta_{3}\left|v_{1}\right|\left|v_{2}\right| \quad \forall v,v_{1},v_{2},x\in\rd.
\end{align}
\end{assA*}

\begin{remark}
Note that \eqref{gra} immediately implies the following linear growth condition
\begin{align}\label{linear}
    |b(x)-b(0)|\leq\theta_{2}|x|,\quad x\in\rd.
\end{align}
\end{remark}

Under \textbf{Assumption A}, we will show that both $(X_{t})_{t \ge 0}$ and $(\tilde{Y}_{k})_{k \ge 0}$ are ergodic; we write $\mu$ and $\tilde{\mu}_{\eta}$, respectively, for their invariant measures, see Propositions~\ref{W1.1-0} and~\ref{W1.1-1} below. Throughout the paper the constants $C$, $c_{1}$, $c_{2}$, $c_{3}$, $c_{4}$ and $\lambda$ may depend on $\theta_{1},\theta_{2},\theta_{3},K,\alpha,d,|b(0)|$ {and $\beta$ for some constant $\beta\in[1,\alpha)$}, but we often suppress this in our notation; moreover, the  exact values of the constants may vary from line to line. Our \emph{main results} are the following two theorems:
\begin{theorem}\label{main}
    Let $(X_{t})_{t \ge 0}$ and $(\tilde{Y}_{k})_{k \ge 0}$ be defined by \eqref{SDE} and \eqref{EM} \textup{(}step size $\eta$\textup{)}, and denote by $\mu$ and $\tilde{\mu}_{\eta}$, their invariant measures. Under \textbf{Assumption A}, there exists a constant $C$ such that the following two statements hold:
    \begin{enumerate}
    \item\label{main-i}
        For every $N\geq 2$ and step size $\eta<\min\left\{1,\:\theta_{1}/{(8\theta_{2}^{2})},\: {1}/{\theta_{1}}\right\}$, one has
        \begin{gather*}
            W_{1}\bigl(\mathrm{law}(X_{\eta N}),\mathrm{law}(\tilde{Y}_{N})\bigr) \leq C(1+|x|) \eta^{2/\alpha-1}.
        \end{gather*}

    \item\label{main-ii}
        For every step size $\eta < \min\left\{1,\:{\theta_{1}}/{\theta_{2}^{2}},\:{1}/{\theta_{1}}\right\}$, one has
        \begin{align*}
            W_{1}\bigl(\mu,\tilde{\mu}_{\eta}\bigr) \leq C\eta^{2/\alpha-1}.
        \end{align*}
    \end{enumerate}
\end{theorem}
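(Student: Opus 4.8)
The plan is to reduce the bound to a one-step error estimate via a discrete version of Duhamel's principle (a telescoping identity), and then to sum the one-step errors using the smoothing and contractivity of the semigroup $P_t$. Throughout we use the dual formulation of $W_1$, so it suffices to estimate $|P_{\eta N}h(x)-\tilde Q_Nh(x)|$ and $|\mu(h)-\tilde\mu_\eta(h)|$ uniformly over $h\in\mathrm{Lip}(1)$. We shall use the following auxiliary estimates: (a) the exponential ergodicity of $(X_t)_{t\ge0}$ and of $(\tilde Y_k)_{k\ge0}$ in $W_1$, together with the uniform first moment bound $\sup_k\Ee|\tilde Y_k^x|\le C(1+|x|)$ (Propositions~\ref{W1.1-0} and~\ref{W1.1-1}); (b) for $h\in\mathrm{Lip}(1)$ and all $t>0$, the gradient and Hessian estimates $\|\nabla P_th\|_\infty\le C\eup^{-\lambda t}$ and $\|\nabla^2 P_th\|_{\mathrm{HS},\infty}\le C\eup^{-\lambda t}\,t^{-1/\alpha}$; and (c) the short-time bound $\Ee|X_s^y-y|\le C(1+|y|)(s+s^{1/\alpha})$ for $0<s\le1$, which follows from $\Ee|Z_s|\le Cs^{1/\alpha}$ (finite since $\alpha>1$), the linear growth \eqref{linear}, and Gronwall's inequality. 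In~(b) the exponential decay comes from the dissipativity \eqref{diss} via a (reflection) coupling argument, while the short-time factor $t^{-1/\alpha}$ in the Hessian estimate is the delicate point: it is obtained from Bismut's integration-by-parts formula for the stable-driven SDE \eqref{SDE}, for which we adopt the time-change framework of \cite{Z}.

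For part~\eqref{main-i} we use the semigroup properties of $P$ and of the chain $\tilde Q$ to write the telescoping identity
\[
    P_{\eta N}h-\tilde Q_Nh=\sum_{k=0}^{N-1}\tilde Q_k\bigl(P_\eta-\tilde Q_1\bigr)P_{(N-1-k)\eta}h ,
\]
which reduces the problem to a one-step estimate for $(P_\eta-\tilde Q_1)g$. Fix $y\in\rd$ and put $z=y+\eta b(y)$; then
\[
    (P_\eta-\tilde Q_1)g(y)=\Bigl[\Ee g(X_\eta^y)-\Ee g(z+Z_\eta)\Bigr]+\Bigl[\Ee g(z+Z_\eta)-\Ee g\bigl(z+\tfrac{\eta^{1/\alpha}}{\sigma}\widetilde Z_1\bigr)\Bigr].
\]
Coupling $X_\eta^y$ and $z+Z_\eta$ through the common noise $(Z_t)$ we have $X_\eta^y-(z+Z_\eta)=\int_0^\eta(b(X_s^y)-b(y))\,\dif s$, so the first bracket is bounded by $\|\nabla g\|_\infty\,\Ee\bigl|\int_0^\eta(b(X_s^y)-b(y))\,\dif s\bigr|\le C\|\nabla g\|_\infty(1+|y|)\eta^{1+1/\alpha}$ by \eqref{gra} and~(c). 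For the second bracket, the normalisation $\sigma^\alpha=\alpha/(\sigma_{d-1}C_{d,\alpha})$ and the density \eqref{density} are chosen precisely so that $\Ee g\bigl(z+\tfrac{\eta^{1/\alpha}}{\sigma}\widetilde Z_1\bigr)=g(z)+\eta C_{d,\alpha}\int_{|w|>\eta^{1/\alpha}/\sigma}(g(z+w)-g(z))|w|^{-d-\alpha}\,\dif w$; comparing this with the Dynkin formula $\Ee g(z+Z_\eta)=g(z)+\int_0^\eta\Ee\mathcal L g(z+Z_s)\,\dif s$ for the generator $\mathcal L$ of $(Z_t)$, splitting $\mathcal L$ at scale $\eta^{1/\alpha}/\sigma$, and using the symmetry of the Lévy measure of $Z$ to kill the first-order term, one gets, uniformly in $z$,
\[
    \Bigl|\Ee g(z+Z_\eta)-\Ee g\bigl(z+\tfrac{\eta^{1/\alpha}}{\sigma}\widetilde Z_1\bigr)\Bigr|\le C\,\|\nabla^2g\|_{\mathrm{HS},\infty}\,\eta^{2/\alpha},
\]
the power $\eta^{2/\alpha}$ coming from the small jumps of $Z$ below scale $\eta^{1/\alpha}$ that the Pareto scheme discards. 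Hence $|(P_\eta-\tilde Q_1)g(y)|\le C(1+|y|)\bigl(\|\nabla g\|_\infty\eta^{1+1/\alpha}+\|\nabla^2g\|_{\mathrm{HS},\infty}\eta^{2/\alpha}\bigr)$; for merely Lipschitz $g=h$ the first bracket is still $\le C(1+|y|)\eta^{1+1/\alpha}$, while the second is $\le C\eta^{1/\alpha}$, namely $\eta^{1/\alpha}$ times the ($\eta$-independent) $W_1$-distance between the rescaled stable and Pareto laws.

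Inserting $g=P_{m\eta}h$, $m=N-1-k\ge1$ (and the boundary term $g=h$ for $m=0$) into the telescoping sum, applying $\tilde Q_k$ together with $\Ee(1+|\tilde Y_k^x|)\le C(1+|x|)$ from~(a), and using~(b), we obtain
\begin{align*}
    |P_{\eta N}h(x)-\tilde Q_Nh(x)|
    &\le C(1+|x|)\eta^{1+1/\alpha}\sum_{m\ge1}\eup^{-\lambda m\eta}
    +C(1+|x|)\eta^{2/\alpha}\sum_{m\ge1}(m\eta)^{-1/\alpha}\eup^{-\lambda m\eta}\\
    &\quad+C(1+|x|)\eta^{1/\alpha}.
\end{align*}
Since $\sum_{m\ge1}\eup^{-\lambda m\eta}\le C\eta^{-1}$, and since $1/\alpha<1$ makes $t\mapsto t^{-1/\alpha}\eup^{-\lambda t}$ integrable on $(0,\infty)$ so that the Riemann-sum comparison yields $\sum_{m\ge1}(m\eta)^{-1/\alpha}\eup^{-\lambda m\eta}\le C\eta^{-1}$, and since $1/\alpha\ge 2/\alpha-1$ for $\alpha\ge1$, the right-hand side is at most $C(1+|x|)\eta^{2/\alpha-1}$; taking the supremum over $h\in\mathrm{Lip}(1)$ gives~\eqref{main-i}. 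For part~\eqref{main-ii}, fix $x=0$ and apply the triangle inequality $W_1(\mu,\tilde\mu_\eta)\le W_1(\mu,\mathrm{law}(X_{\eta N}^0))+W_1(\mathrm{law}(X_{\eta N}^0),\mathrm{law}(\tilde Y_N^0))+W_1(\mathrm{law}(\tilde Y_N^0),\tilde\mu_\eta)$: the middle term is $\le C\eta^{2/\alpha-1}$ by~\eqref{main-i}, uniformly in $N$, whereas the first and third tend to $0$ as $N\to\infty$ by the ergodicity in~(a) (with $\eta$ fixed); letting $N\to\infty$ gives the claim.

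The main obstacle is the Hessian estimate in~(b): the decisive fact is that the short-time singularity $t^{-1/\alpha}$ is integrable at $t=0$ (because $\alpha>1$), which is exactly what keeps $\eta\sum_{m\ge1}\|\nabla^2P_{m\eta}h\|_{\mathrm{HS},\infty}$ bounded as $\eta\to0$ and so produces the rate $\eta^{2/\alpha-1}$; establishing it requires Bismut's formula together with the time-change construction of \cite{Z} to handle the second-order Malliavin differentiation of the stable flow. A secondary difficulty in the one-step estimate is that the $\alpha$-stable and Pareto laws have no second moment, which forces one to work throughout with $\beta$-moments for some $\beta\in[1,\alpha)$ and to use $\alpha>1$ so that $\int_{|w|>1}|w|\,|w|^{-d-\alpha}\,\dif w<\infty$ — this is what allows all the one-step errors to be controlled by $\|\nabla g\|_\infty$ and $\|\nabla^2g\|_{\mathrm{HS},\infty}$ alone, without higher derivatives.
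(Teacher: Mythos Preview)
Your approach is essentially the paper's: the same Duhamel telescoping $P_{N\eta}h-\tilde Q_Nh=\sum_k\tilde Q_k(P_\eta-\tilde Q_1)P_{(N-1-k)\eta}h$, the same one-step comparison (Lemma~\ref{compare} in the paper, whose proof matches your sketch including the $\mathsf R$-term from the small jumps and the Dynkin/time-variation term controlled via the H\"older continuity of $(-\Delta)^{\alpha/2}$), and the same reduction of Part~\eqref{main-ii} to Part~\eqref{main-i} via ergodicity.

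One point deserves care. Your item~(b) asserts $\|\nabla^2P_th\|_{\mathrm{HS},\infty}\le C\eup^{-\lambda t}t^{-1/\alpha}$ for \emph{all} $t>0$; as stated this is too strong for large $t$. The Bismut argument (Lemma~\ref{regular}) only yields $\le Ct^{-1/\alpha}$ on $t\in(0,1]$, while for $t>1$ one factorises $P_t=P_1\circ P_{t-1}$, uses that $P_{t-1}h\in\mathrm{Lip}(C\eup^{-\lambda(t-1)})$ from Proposition~\ref{W1.1}, and applies Lemma~\ref{regular} at time $1$ to get $\le C\eup^{-\lambda t}$ with no $t^{-1/\alpha}$. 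The honest combined bound is therefore $C(t\wedge1)^{-1/\alpha}\eup^{-\lambda(t-1)_+}$, which is \emph{larger} than your formula for $t>1$. This does not affect your conclusion, since splitting the sum at $m\eta=1$ still gives $\sum_m\|\nabla^2P_{m\eta}h\|_{\mathrm{HS},\infty}\le C\eta^{-1}$; but your Riemann-sum line as written rests on an inequality you have not actually established. The paper sidesteps this by an explicit case distinction ($N\le\eta^{-1}+1$ versus $N>\eta^{-1}+1$) and, in the second case, by writing $P_{(N-i)\eta}=P_1P_{(N-i)\eta-1}$ before summing --- exactly the factorisation above made pedestrian.
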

\begin{theorem}\label{main2}
    Let $(X_{t})_{t \ge 0}$ and $(Y_{k})_{k \ge 0}$ be defined by \eqref{SDE} and \eqref{EM2} \textup{(}step size $\eta$\textup{)}, and denote by $\mu$ and $\mu_{\eta}$, their invariant measures. Under \textbf{Assumption A}, for any
    $\beta\in[1,\alpha)$, there exists a constant $C$ depending on $\beta$
    such that the following two statements hold:
    \begin{enumerate}
    \item\label{main2-i}
        For every $N\geq 2$ and step size $\eta<\min\left\{1,\:\theta_{1}/{(8\theta_{2}^{2})},\: {1}/{\theta_{1}}\right\}$, one has
        \begin{gather*}
            W_{1}\bigl(\mathrm{law}(X_{\eta N}),\mathrm{law}(Y_{N})\bigr) \leq C(1+|x|^{\beta}) \eta^{1+\frac{1}{\alpha}-\frac{1}{\beta}}.
        \end{gather*}

    \item\label{main2-ii}
        For every step size $\eta < \min\left\{1,\:{\theta_{1}}/{\theta_{2}^{2}},\:{1}/{\theta_{1}}\right\}$, one has
        \begin{align*}
            W_{1}\bigl(\mu,\mu_{\eta}\bigr) \leq C \eta^{1+\frac{1}{\alpha}-\frac{1}{\beta}}.
        \end{align*}
    \end{enumerate}
\end{theorem}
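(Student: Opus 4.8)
The plan is to prove part~\ref{main2-i} by a discrete Duhamel (Lindeberg-type) decomposition and then to deduce part~\ref{main2-ii} from it via ergodicity. By the dual description of $W_1$ it suffices to bound $|P_{\eta N}h(x)-Q_Nh(x)|$ uniformly over $h\in\mathrm{Lip}(1)$. Since $P_{\eta N}=(P_\eta)^N$ and $Q_N=(Q_1)^N$, a telescoping sum gives
\begin{gather*}
P_{\eta N}h-Q_Nh=\sum_{k=0}^{N-1}Q_k\,(P_\eta-Q_1)\,\bigl(P_{\eta(N-1-k)}h\bigr),
\end{gather*}
so everything reduces to controlling the one-step error $(P_\eta-Q_1)g(y)=\Ee\bigl[g(X_\eta^y)-g(Y_1^y)\bigr]$, with $g=P_th$, $t=\eta(N-1-k)\ge0$, then averaging it over $Y_k^x$ and summing. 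Because the innovations of \eqref{EM2} are genuine increments of $Z$, we may run $(X_t)_{t\ge0}$ and $(Y_k)_{k\ge0}$ with the same $Z$, so that in one step the only discrepancy is the drift error $A:=X_\eta^y-Y_1^y=\int_0^\eta\bigl(b(X_s^y)-b(y)\bigr)\dif s$.

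Next I would assemble the a priori ingredients. From \eqref{diss}, \eqref{linear} and self-similarity of $Z$ (so that $\Ee|Z_s|^\beta=c_\beta s^{\beta/\alpha}<\infty$, finite precisely because $\beta<\alpha$) one obtains the uniform moment bounds $\Ee V_\beta(X_t^y)\le C(1+|y|^\beta)$ and $\Ee V_\beta(Y_k^y)\le C(1+|y|^\beta)$ for $\eta$ below the stated thresholds, together with $\Ee|X_s^y-y|^\beta\le C(1+|y|^\beta)s^{\beta/\alpha}$ for $0\le s\le1$. With \eqref{gra} and Jensen's inequality these give
\begin{gather*}
\bigl(\Ee|A|^\beta\bigr)^{1/\beta}\le C(1+|y|^\beta)^{1/\beta}\eta^{1+1/\alpha},\qquad |\Ee A|\le C(1+|y|)\eta^{2};
\end{gather*}
for the second bound one writes $\Ee b(X_s^y)-b(y)=\int_0^s\Ee\bigl[(\mathcal Lb)(X_r^y)\bigr]\dif r$, where $\mathcal L$ is the generator of \eqref{SDE}, and uses $|\mathcal Lb(x)|\le C(1+|x|)$, which in turn rests on the boundedness of the nonlocal part of $\mathcal L$ applied to $b$ (this is where $\alpha>1$ enters, through $\int_{|z|>1}|z|\,|z|^{-\alpha-d}\dif z<\infty$). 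Finally I would use the regularity of $P_t$ obtained from Bismut's formula in the subordination (time-change) framework of \cite{Z}: realising $Z$ as a Brownian motion subordinated by an independent $(\alpha/2)$-stable subordinator, conditioning on the subordinator and applying Gaussian Malliavin calculus yields, for $h\in\mathrm{Lip}(1)$,
\begin{gather*}
\|\nabla P_th\|_\infty\le C\eup^{-\lambda t},\qquad \|\nabla^2P_th\|_{\mathrm{HS},\infty}\le C\,t^{-1/\alpha}\eup^{-\lambda t}\qquad(t>0),
\end{gather*}
the exponential decay reflecting the $W_1$-contraction implied by \eqref{diss} (Propositions~\ref{W1.1-0} and~\ref{W1.1-1}) and the $t^{-1/\alpha}$ smoothing the $\alpha$-stable structure.

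Now the one-step estimate. For the last summand $k=N-1$, where $g=h$, the $1$-Lipschitz property alone gives $|(P_\eta-Q_1)h(y)|\le\Ee|A|\le C(1+|y|^\beta)^{1/\beta}\eta^{1+1/\alpha}$. For $k\le N-2$, where $g=P_{\eta(N-1-k)}h$ has bounded gradient and Hessian, I would write $g(X_\eta^y)-g(Y_1^y)=\int_0^1\langle\nabla g(Y_1^y+uA),A\rangle\dif u$ and split $A=\Ee A+(A-\Ee A)$. The deterministic part contributes at most $\|\nabla g\|_\infty|\Ee A|\le C\eup^{-\lambda t}(1+|y|)\eta^{2}$. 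For the centred part one uses $\Ee(A-\Ee A)=0$ and a Bismut-type integration by parts, performed conditionally on the subordinator and then integrated over it, to move one derivative onto a Malliavin weight; this produces a bound in terms of $\|\nabla^2g\|_{\mathrm{HS},\infty}$ with an additional power of $\eta$, the only moments of $Z$ that appear being of order $<\alpha$ (equivalently, moments of the subordinator of order $<\alpha/2$). Altogether
\begin{gather*}
|(P_\eta-Q_1)g(y)|\le C\,\|\nabla^2g\|_{\mathrm{HS},\infty}(1+|y|^\beta)\,\eta^{2+1/\alpha-1/\beta}+C\eup^{-\lambda t}(1+|y|)\,\eta^{2}.
\end{gather*}
Inserting these bounds into the telescoping identity, averaging over $Y_k^x$ by means of $\Ee V_\beta(Y_k^x)\le C(1+|x|^\beta)$, and summing, the decisive sum is $\sum_{j\ge1}(\eta j)^{-1/\alpha}\eup^{-\lambda\eta j}\le C\eta^{-1}$, which converges uniformly in $N$ precisely because $1/\alpha<1$; every resulting contribution is $\le C(1+|x|^\beta)\eta^{1+1/\alpha-1/\beta}$ for $\eta<1$, which is part~\ref{main2-i}.

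Finally, part~\ref{main2-ii} follows from part~\ref{main2-i}: by \eqref{diss} the processes $(X_t)_{t\ge0}$ and $(Y_k)_{k\ge0}$ are exponentially ergodic in $W_1$ with invariant measures $\mu$ and $\mu_\eta$ (Propositions~\ref{W1.1-0} and~\ref{W1.1-1}), so $W_1\bigl(\mu,\mathrm{law}(X_{\eta N}^0)\bigr)\to0$ and $W_1\bigl(\mathrm{law}(Y_N^0),\mu_\eta\bigr)\to0$ as $N\to\infty$; combining this with part~\ref{main2-i} at $x=0$ and the triangle inequality and letting $N\to\infty$ yields $W_1(\mu,\mu_\eta)\le C\eta^{1+1/\alpha-1/\beta}$. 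The main obstacle is the one-step bound for $k\le N-2$: because $\alpha$-stable increments have no second moment, the naive second-order Taylor remainder, which would involve the infinite quantity $\|\nabla^2g\|_{\mathrm{HS},\infty}\,\Ee|A|^2$, is unavailable, so the extra power of $\eta$ must be extracted instead through the Bismut/Malliavin integration by parts in the subordination framework; making this device interact with the (non-independent) increment $A$ while remaining inside $L^\beta$ with $\beta<\alpha$ is the technical heart of the proof, and it is exactly this that degrades the exponent from $1/\alpha$ to $1/\beta$ (so the rate is $\eta^{1-\epsilon}$ but never $\eta$).
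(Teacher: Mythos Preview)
Your overall architecture---telescoping Duhamel sum, one-step error, semigroup regularity via Bismut in the subordination set-up, and deduction of part~\ref{main2-ii} from part~\ref{main2-i} by ergodicity and a triangle inequality at $x=0$---is exactly the paper's. The paper in fact proves Theorem~\ref{main2} by rerunning the proof of Theorem~\ref{main} verbatim, replacing the one-step Lemma~\ref{compare} by Lemma~\ref{taylor} and the ergodicity statement~\eqref{exact2} by~\eqref{exact22}. One cosmetic difference: the paper proves the Hessian bound $\|\nabla^2P_th\|_{\mathrm{HS},\infty}\le Ct^{-1/\alpha}$ only for $t\in(0,1]$ (Lemma~\ref{regular}) and obtains the large-$t$ decay separately from the $W_1$-contraction (Proposition~\ref{W1.1}), splitting the sum into $N\le\eta^{-1}+1$ and $N>\eta^{-1}+1$; your composite bound $Ct^{-1/\alpha}\eup^{-\lambda t}$ is an acceptable shortcut.

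The substantive divergence is in the one-step estimate, and here your proposal has a gap. The paper's Lemma~\ref{taylor} is \emph{elementary}---no Malliavin calculus enters at this stage. With $A=X_\eta^y-Y_1^y$ it Taylor-expands $g(X_\eta^y)-g(Y_1^y)$ around $Y_1^y$ and splits the first-order term not by centring $A$ but by evaluating $\nabla g$ at the \emph{deterministic} point $y+\eta b(y)$:
\[
\Ee\langle\nabla g(Y_1^y),A\rangle
=\underbrace{\Ee\bigl\langle\nabla g\bigl(y+\eta b(y)+Z_\eta\bigr)-\nabla g\bigl(y+\eta b(y)\bigr),\,A\bigr\rangle}_{\mathsf I}
+\underbrace{\bigl\langle\nabla g\bigl(y+\eta b(y)\bigr),\,\Ee A\bigr\rangle}_{\mathsf{II}}.
\]
Term $\mathsf{II}$ is your $|\Ee A|\le C(1+|y|)\eta^2$. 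The $-1/\beta$ loss arises entirely in term $\mathsf I$: one splits on $\{|Z_\eta|\le1\}$ and $\{|Z_\eta|>1\}$, uses the truncated-moment/tail estimates $\Ee\bigl[|Z_\eta|^{\beta/(\beta-1)}\II_{(0,1]}(|Z_\eta|)\bigr]\le C\eta$ and $\mathds P(|Z_\eta|>1)\le C\eta$, and applies H\"older with the conjugate pair $(\beta/(\beta-1),\beta)$ against $(\Ee|A|^\beta)^{1/\beta}\le C(1+|y|)\eta^{1+1/\alpha}$; the product is $\eta^{(\beta-1)/\beta}\cdot\eta^{1+1/\alpha}=\eta^{2+1/\alpha-1/\beta}$. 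The second-order remainder is controlled by the interpolation $|\nabla g(x)-\nabla g(y)|\le(2\|\nabla g\|_\infty+\|\nabla^2g\|_{\mathrm{HS},\infty})|x-y|^{\beta-1}$ (Lemma~\ref{holder2}) and $\Ee|A|^\beta$.

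By contrast, your route---centre $A$ and handle $\Ee\int_0^1\langle\nabla g(Y_1^y+uA),A-\Ee A\rangle\dif u$ by a ``Bismut-type integration by parts conditionally on the subordinator''---is not what the paper does and, as stated, does not produce the required exponent. The integrand $\nabla g(Y_1^y+uA)$ is random through \emph{both} $Z_\eta$ (in $Y_1^y$) and $A$, so centring $A$ buys nothing directly; a Malliavin IBP would shift a derivative but there is no visible mechanism in your sketch that yields precisely $\eta^{1-1/\beta}$ from the stable part, and any naive second-order remainder runs into $\Ee|A|^2=\infty$. The honest source of the $1/\beta$ is the H\"older/truncation device on $Z_\eta$, not Bismut's formula; Bismut is used in the paper only to get the regularity of $P_t$ (Lemma~\ref{regular}), not for the one-step comparison.
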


\begin{remark}
    The rate $\eta^{2/\alpha-1}$ in the first theorem is optimal for the one-dimensional Ornstein--Uhlenbeck process,
    see Proposition~\ref{ourate} below.
\end{remark}

The proofs of Theorems \ref{main} and \ref{main2} are presented in Section \ref{mainproof}. In Section \ref{A}, we use a time-change argument and the Bismut formula to prove Lemma \ref{regular}, which is the key to the proof of our main result.  Appendix~\ref{sec2} includes the proofs of the propositions in this section for the completeness. Finally, in Appendix \ref{OU}, the exact convergence rate $\eta^{2/\alpha-1}$ is reached for the Ornstein--Uhlenbeck process on $\real$, which shows that the rate in Theorem  \ref{main}\,\eqref{main-ii} is sharp.

\subsection{Auxiliary propositions}
Here we collect a few auxiliary properties of $(X_{t})_{t \ge 0}$ and $(Y_{k})_{k \ge 0}$. The proofs are standard, but we include them in Appendix \ref{sec2} to be self-contained. Recall that $V_{\beta}(x)=(1+|x|^{2})^{\beta/2}$.
\begin{proposition}\label{W1.1-0}
    Let \textbf{Assumption A} hold and denote by $(X_{t})_{t\geq 0}$ the solution to the SDE \eqref{SDE}. Then, $(X_{t})_{t \ge 0}$ admits a unique invariant probability measure $\mu$ such that for $1\leq \beta < \alpha$
    \begin{align}\label{exact1}
        \sup_{|f|\leq V_{\beta}}\big| \Ee [f(X_{t}^x)]-\mu(f)\big|
        \leq c_{1} V_{\beta}(x) \eup^{-c_{2}t}, \quad t>0,
    \end{align}
    for some constants $c_{1},c_{2}>0$. In particular, there exists a constant $C>0$ such that
    \begin{align}\label{moment}
        \Ee |X_{t}^x|^{\beta}\leq C(1+|x|^{\beta}), \quad t>0.
    \end{align}
\end{proposition}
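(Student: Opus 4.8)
The plan is to establish a Lyapunov (Foster--Lyapunov) drift condition for the process $(X_t)_{t\ge0}$ using the weight function $V_\beta$, and then combine it with a minorization/irreducibility argument to obtain the exponential ergodicity estimate \eqref{exact1}; the moment bound \eqref{moment} then drops out as a special case. First I would apply It\^o's formula (for the jump process) to $V_\beta(X_t)$; the generator of \eqref{SDE} is $\mathcal{A}f(x)=\langle b(x),\nabla f(x)\rangle + \mathcal{L}f(x)$, where $\mathcal{L}$ is the fractional-Laplacian-type operator associated with the $\alpha$-stable process $Z$. The drift part $\langle b(x),\nabla V_\beta(x)\rangle$ is controlled by the dissipativity condition \eqref{diss}: writing $b(x)=b(x)-b(0)+b(0)$ and using \eqref{diss} with $y=0$ gives $\langle b(x),x\rangle \le -\theta_1|x|^2 + K + \langle b(0),x\rangle$, which for $|x|$ large behaves like $-\theta_1|x|^2$; since $\nabla V_\beta(x)=\beta x (1+|x|^2)^{\beta/2-1}$, the drift term is $\le -c|x|^\beta$ outside a compact set. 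The nonlocal part $\mathcal{L}V_\beta(x)$ must be shown to grow no faster than $O(|x|^{\beta-\alpha})$ (hence to be of lower order, since $\beta<\alpha$ forces $\beta-\alpha<\beta-1<\beta$), which is where the restriction $\beta<\alpha$ is essential: it guarantees $V_\beta$ lies in the domain of $\mathcal{L}$ and that the integral $\int(V_\beta(x+y)-V_\beta(x)-\langle\nabla V_\beta(x),y\rangle\II_{|y|\le1})\,|y|^{-\alpha-d}\,\dif y$ converges and has the right growth. Combining these yields a drift inequality of the form $\mathcal{A}V_\beta \le -c_2 V_\beta + b_0 \II_{K_0}$ for some compact $K_0$ and constant $b_0$.

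Next I would invoke the standard theory of exponential ergodicity for Markov processes (in the style of Meyn--Tweedie / Down--Meyn--Tweedie): the drift condition above, together with a local minorization condition on compact sets, implies $V_\beta$-uniform exponential ergodicity, i.e.\ $\sup_{|f|\le V_\beta}|\Ee f(X_t^x)-\mu(f)| \le c_1 V_\beta(x)\eup^{-c_2 t}$. The minorization/irreducibility requirement is easy here: the $\alpha$-stable process has a strictly positive, smooth transition density on all of $\rd$, and the drift $b$ is locally bounded and smooth, so $X_t^x$ has a density bounded below uniformly over $x$ in compacts and $t$ in a bounded interval away from $0$; this gives the needed small-set condition. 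Existence and uniqueness of the invariant measure $\mu$ follows from the drift condition (tightness) plus irreducibility.

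Finally, for \eqref{moment} I would take $f(y)=\min\{|y|^\beta, R\}$ (a bounded function dominated by $V_\beta$ up to a constant) in \eqref{exact1}, let $R\to\infty$ by monotone convergence, and use that $\mu(V_\beta)<\infty$ (itself a consequence of the drift inequality, e.g.\ by integrating $\mathcal{A}V_\beta\le -c_2V_\beta+b_0\II_{K_0}$ against $\mu$ and using $\mu(\mathcal{A}V_\beta)=0$). This gives $\Ee|X_t^x|^\beta \le c_1 V_\beta(x) + \mu(|\cdot|^\beta) \le C(1+|x|^\beta)$.

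The main obstacle I anticipate is the careful estimate of the nonlocal term $\mathcal{L}V_\beta(x)$ and verifying that it is genuinely of lower order than $|x|^\beta$ uniformly in $x$; one must split the defining integral into the regions $|y|\le\frac12|x|$, $\frac12|x|<|y|\le 2|x|$ (or similar), and $|y|>2|x|$, use a second-order Taylor expansion of $V_\beta$ on the small-jump region together with the bound $\|\nabla^2 V_\beta\|\lesssim (1+|x|^2)^{\beta/2-1}$, and handle the large-jump region directly using $\beta<\alpha$ so that $\int_{|y|>1}|y|^\beta|y|^{-\alpha-d}\,\dif y<\infty$. Everything else is a routine application of established ergodic theory for jump diffusions, so I would keep those parts brief and refer to the literature.
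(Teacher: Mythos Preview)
Your proposal is correct and follows essentially the same route as the paper: verify a Foster--Lyapunov drift inequality $\mathcal{A}V_\beta\le -\lambda V_\beta + q\,\II_A$ for the generator and then invoke the Meyn--Tweedie framework (the paper cites \cite[Theorems~5.1 and~6.1]{M-T3}) for exponential ergodicity. The only notable difference is that the paper handles the nonlocal term more simply than you anticipate---it splits at the fixed radius $|y|=1$ rather than at $|y|\sim|x|$ and obtains the cruder but sufficient bound $|(-\Delta)^{\alpha/2}V_\beta(x)|\le C(1+|x|^{\beta-1})$, which is then absorbed via Young's inequality; your three-region split aiming for $O(|x|^{\beta-\alpha})$ would work but is unnecessary.
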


\begin{proposition} \label{W1.1}
    Under \textbf{Assumption A}, there exist for every $t>0$ and $x,y\in\rd$ constants $C>0$ and $\lambda>0$ such that
    \begin{align*}
        W_{1}\left(\mathrm{law}(X_{t}^{x}),\mathrm{law}(X_{t}^{y})\right)
        \leq C\eup^{-\lambda t}|x-y|.
    \end{align*}
\end{proposition}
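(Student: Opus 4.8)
The plan is to prove the contraction estimate of Proposition~\ref{W1.1} by a synchronous (same-noise) coupling of two copies of the SDE~\eqref{SDE} started at $x$ and $y$, and then to control the squared distance of the two solutions pathwise using the one-sided dissipativity hypothesis~\eqref{diss}. Concretely, let $X_t^x$ and $X_t^y$ solve~\eqref{SDE} driven by the \emph{same} $\alpha$-stable process $(Z_t)_{t\ge 0}$; then $D_t := X_t^x - X_t^y$ is absolutely continuous in $t$ (the jump parts cancel, since the noise is additive and identical for the two copies), with $D_0 = x-y$ and
\begin{gather*}
    \frac{\dif}{\dif t} D_t = b(X_t^x) - b(X_t^y).
\end{gather*}

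From here the computation is short. Since $t\mapsto |D_t|^2$ is differentiable with $\frac{\dif}{\dif t}|D_t|^2 = 2\langle b(X_t^x)-b(X_t^y),\, X_t^x-X_t^y\rangle$, the first part of \textbf{Assumption A} gives $\frac{\dif}{\dif t}|D_t|^2 \le -2\theta_1 |D_t|^2 + 2K$. However, to land at a \emph{clean} exponential contraction of the form $C\eup^{-\lambda t}|x-y|$ with no additive constant, I would instead avoid introducing $K$ and argue as follows: apply~\eqref{diss} with the same point, i.e.\ note that the genuinely useful bound for the coupling is obtained by combining dissipativity with the Lipschitz bound~\eqref{gra}. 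Actually the cleanest route is to observe that~\eqref{diss} together with the elementary fact that $b(x)-b(y) = \int_0^1 \nabla b(y + s(x-y))(x-y)\,\dif s$ and~\eqref{linear}/\eqref{gra} is not needed: the additive constant $K$ in~\eqref{diss} is harmless because it is multiplied against $x=y$ implicitly — more precisely, when $x=y$ one has $D_t\equiv 0$, so by a limiting/localization argument the effective inequality controlling $|D_t|^2$ is $\frac{\dif}{\dif t}|D_t|^2 \le -2\theta_1|D_t|^2$ whenever $D_t\neq 0$, hence $|D_t|^2 \le \eup^{-2\theta_1 t}|x-y|^2$, i.e.\ $|D_t|\le \eup^{-\theta_1 t}|x-y|$. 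Taking expectations,
\begin{gather*}
    W_1\bigl(\mathrm{law}(X_t^x),\mathrm{law}(X_t^y)\bigr) \le \Ee|X_t^x - X_t^y| \le \eup^{-\theta_1 t}|x-y|,
\end{gather*}
which is the assertion with $\lambda=\theta_1$ and $C=1$.

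The subtle point — and the step I expect to need the most care — is the justification that $K$ genuinely drops out. The inequality~\eqref{diss} is stated with an additive $K\ge 0$, and for $K>0$ the naive Grönwall argument only yields $|D_t|^2 \le \eup^{-2\theta_1 t}|x-y|^2 + K/\theta_1$, which does \emph{not} tend to $0$ as $|x-y|\to 0$. The resolution is that~\eqref{diss} must in fact hold with $K$ replaceable by $0$ in the regime relevant to the coupling: writing $g(x,y):=\langle b(x)-b(y),x-y\rangle + \theta_1|x-y|^2$, inequality~\eqref{diss} says $g\le K$ everywhere, but by~\eqref{gra} one also has the two-sided Lipschitz-type bound $g(x,y)\le (\theta_2+\theta_1)|x-y|^2$; combining these, $g(x,y)\le \min\{K,\,(\theta_2+\theta_1)|x-y|^2\}$, and in particular $\langle b(x)-b(y),x-y\rangle \le -\theta_1|x-y|^2 + \min\{K,(\theta_2+\theta_1)|x-y|^2\}$. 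This still leaves a problem for $|x-y|$ bounded away from $0$. The honest fix is to prove contraction only \emph{eventually}: one uses the additive-$K$ Grönwall bound to get into a ball, notes that after a deterministic time the distance is of order $\sqrt{K/\theta_1}$, and then — since the statement allows an arbitrary multiplicative constant $C$ — rescales. Alternatively, and more robustly, I would introduce a reflection/synchronous hybrid coupling or simply prove the weaker-but-sufficient statement that the semigroup is a contraction in a \emph{weighted} Wasserstein metric $W_{1,V}$; but since Proposition~\ref{W1.1} is stated in plain $W_1$, the cleanest honest argument is: (i) synchronous coupling and the $K$-Grönwall bound give $\Ee|D_t|^2 \le \eup^{-2\theta_1 t}|x-y|^2 + K/\theta_1$, hence $\Ee|D_t|\le \eup^{-\theta_1 t}|x-y| + \sqrt{K/\theta_1}$; (ii) for $t \ge t_0$ with $t_0$ depending on $|x-y|$ this is $\le 2\eup^{-\theta_1 t/2}|x-y|$ after adjusting constants when $|x-y|\ge$ some threshold, while for $|x-y|$ small one uses the local $g\le (\theta_1+\theta_2)|x-y|^2$ bound to kill $K$ outright. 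Packaging the two regimes and absorbing everything into a single pair of constants $C,\lambda$ is the only nonroutine bookkeeping; everything else is the two-line dissipativity computation above.
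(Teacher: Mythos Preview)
Your proposal has a genuine gap: synchronous coupling alone cannot deliver the estimate when $K>0$ in \eqref{diss}, and none of your proposed fixes actually work.

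You correctly identify the problem: with the same noise, $D_t:=X_t^x-X_t^y$ obeys the pathwise bound
\[
    |D_t|^2 \le \eup^{-2\theta_1 t}|x-y|^2 + \tfrac{K}{\theta_1},
\]
which has a constant floor. Your three attempted repairs all fail:
\begin{itemize}
    \item The ``limiting/localization'' claim that $K$ drops out because $D_t\equiv 0$ when $x=y$ is a non-sequitur. Inequality \eqref{diss} does not sharpen as $|x-y|\to 0$; the only improvement available for small $|x-y|$ comes from the Lipschitz bound \eqref{gra}, which gives $\langle b(x)-b(y),x-y\rangle \le \theta_2|x-y|^2$ --- the wrong sign for contraction.
    \item Step (ii) asserts that $\eup^{-\theta_1 t}|x-y| + \sqrt{K/\theta_1} \le 2\eup^{-\theta_1 t/2}|x-y|$ ``after adjusting constants''. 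This is impossible for large $t$: the left side stays above $\sqrt{K/\theta_1}>0$ while the right side tends to $0$.
    \item For small $|x-y|$, the bound $\langle b(x)-b(y),x-y\rangle\le\theta_2|x-y|^2$ yields $\frac{\dif}{\dif t}|D_t|^2 \le 2\theta_2|D_t|^2$, i.e.\ exponential \emph{growth}, so it cannot ``kill $K$ outright''.
\end{itemize}
The underlying obstruction is structural: under synchronous coupling the noise cancels, and when $K>0$ the drift may be locally expansive (think of a smoothed double-well drift, which satisfies \textbf{Assumption A} with $K>0$). There is then no mechanism to contract nearby starting points, and no amount of bookkeeping with $C,\lambda$ will manufacture one.

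The paper's proof takes a different route precisely to exploit the noise at short range. It verifies that \eqref{gra} gives $\langle b(x)-b(y),x-y\rangle\le\theta_2|x-y|^2$ for all $x,y$, while \eqref{diss} gives $\langle b(x)-b(y),x-y\rangle\le -\tfrac{\theta_1}{2}|x-y|^2$ once $|x-y|^2>2K/\theta_1$, and then invokes \cite[Theorem~1.2]{Wan16}. That result uses a \emph{non-synchronous} (reflection-type) coupling of the L\'evy noise, so that the jumps actively drive the two copies together when they are close, while dissipativity handles the far regime. If you want a self-contained argument, you would need to construct such a coupling rather than the synchronous one.
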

\begin{proposition}\label{W1.1-1}
    Let \textbf{Assumption A} hold and denote by $(Y_{k})_{k \ge 0}$ and $(\tilde{Y}_{k})_{k \ge 0}$ the Markov chains defined by \eqref{EM2} and \eqref{EM}, respectively. Assume that the step size satisfies   $\eta < \min\left\{1,\: \theta_{1}/\theta_{2}^{2},\:1/\theta_{1}\right\}$. Then
     \begin{enumerate}
    \item  the chain $(Y_{k})_{k \ge 0}$ admits a unique invariant measure $\mu_{\eta}$, such that for all $x\in\rd$ and $k>0$,
    \begin{align}\label{exact22}
        \sup_{|f|\leq V_{1}}|\Ee f(Y_{k}^{x})-\mu_{\eta}(f)|
        \leq c_{1}V_{1}(x)\eup^{-c_{2}k},
    \end{align}
    for some constants $c_{1},c_{2}>0$.
    \item
  the chain $(\tilde{Y}_{k})_{k \ge 0}$ admits a unique invariant measure $\tilde{\mu}_{\eta}$, such that for all $x\in\rd$ and $k>0$,
    \begin{align}\label{exact2}
        \sup_{|f|\leq V_{1}}|\Ee f(\tilde{Y}_{k}^{x})-\tilde{\mu}_{\eta}(f)|
        \leq c_{3}V_{1}(x)\eup^{-c_{4}k},
    \end{align}
    for some constants $c_{3},c_{4}>0$.
    \end{enumerate}
\end{proposition}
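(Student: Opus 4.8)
The plan is to verify, for each of the two transition kernels $Q$ and $\tilde{Q}$, the two standard hypotheses of a Harris-type ergodic theorem: a geometric Foster--Lyapunov drift inequality with Lyapunov function $V_{1}$, and a uniform minorization on the sublevel sets $\{V_{1}\le R\}$. Once these are in place, a standard result (for instance Meyn--Tweedie's geometric ergodicity theorem, or the quantitative form of Harris' theorem due to Hairer--Mattingly) delivers existence and uniqueness of the invariant measure together with geometric convergence in the $V_{1}$-weighted total variation norm $\mu\mapsto\sup_{|f|\le V_{1}}|\mu(f)|$, which is exactly the quantity on the left-hand side of \eqref{exact22} and \eqref{exact2}.

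\emph{Drift.} I treat $\tilde{Q}$; the argument for $Q$ is the same. Writing one step of \eqref{EM} and using $V_{1}(x)\le 1+|x|$ together with the triangle inequality, it suffices to bound $|y+\eta b(y)|$. Expanding $|y+\eta b(y)|^{2}=|y|^{2}+2\eta\langle y,b(y)\rangle+\eta^{2}|b(y)|^{2}$, inserting \eqref{diss} with the second argument set to $0$ and the linear growth \eqref{linear}, one gets
\[
    |y+\eta b(y)|^{2}\le\bigl(1-2\eta\theta_{1}+2\eta^{2}\theta_{2}^{2}\bigr)|y|^{2}+C\eta\bigl(1+|y|\bigr),
\]
so that under the stated restriction on $\eta$ one has $|y+\eta b(y)|\le(1-c\eta)|y|+C\sqrt{\eta}$ for some $c>0$. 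Since $\alpha\in(1,2)$, the Pareto vector $\widetilde{Z}_{1}$ has finite first moment, $\Ee|\widetilde{Z}_{1}|=\alpha/(\alpha-1)$, hence taking conditional expectations yields
\[
    \Ee\bigl[V_{1}(\tilde{Y}_{k+1})\mid\tilde{Y}_{k}=y\bigr]\le(1-c\eta)V_{1}(y)+C_{\eta},\qquad 1-c\eta\in(0,1),
\]
with $C_{\eta}<\infty$. For $Q$ one uses instead $\Ee|Z_{(k+1)\eta}-Z_{k\eta}|=\eta^{1/\alpha}\,\Ee|Z_{1}|<\infty$, again valid because $\alpha>1$.

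\emph{Minorization and conclusion.} Conditionally on $\tilde{Y}_{k}=y$, the law of $\tilde{Y}_{k+1}$ has Lebesgue density $z\mapsto(\sigma/\eta^{1/\alpha})^{d}\,p\bigl(\sigma(z-y-\eta b(y))/\eta^{1/\alpha}\bigr)$. Fixing a compact set $D\subset\rd$ bounded away from the origin and large enough compared with the radius of $\{V_{1}\le R\}$, a direct estimate shows that for $V_{1}(y)\le R$ and $\eta$ small the modulus of the argument of $p$ stays between two positive multiples of $\eta^{-1/\alpha}$; consequently this density is bounded below by a positive constant $c(\eta,R)$ on $D$, uniformly in such $y$, so $\tilde{Q}(y,\cdot)\ge\epsilon(\eta,R)\,\nu(\cdot)$ on $\{V_{1}\le R\}$ with $\nu$ the normalized Lebesgue measure on $D$ (the vanishing of $p$ on the unit ball is harmless once $D$ is far enough out). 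The corresponding minorization for $Q$ is immediate, since the $\alpha$-stable density of $Z_{\eta}$ is strictly positive and continuous, hence bounded below on compacts. Because $\tilde{Q}(y,\cdot)$ (resp.\ $Q(y,\cdot)$) has an everywhere positive density off a small ball (resp.\ everywhere), both chains are Lebesgue-irreducible and aperiodic and the sets $\{V_{1}\le R\}$ are small sets; the Harris-type theorem then provides the unique invariant measures $\tilde{\mu}_{\eta}$ and $\mu_{\eta}$, and the asserted bounds $\sup_{|f|\le V_{1}}|\Ee f(\tilde{Y}_{k}^{x})-\tilde{\mu}_{\eta}(f)|\le c_{3}V_{1}(x)\eup^{-c_{4}k}$ and $\sup_{|f|\le V_{1}}|\Ee f(Y_{k}^{x})-\mu_{\eta}(f)|\le c_{1}V_{1}(x)\eup^{-c_{2}k}$, with all constants positive.

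I expect the only slightly delicate point to be the drift estimate: squeezing the one-step factor strictly below $1$ under the prescribed bound on $\eta$, and absorbing the heavy-tailed increment into the additive constant through its first moment, which is finite precisely because $\alpha>1$. The minorization argument and the appeal to the ergodic theorem are routine, and they apply verbatim to both chains.
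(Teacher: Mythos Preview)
Your proposal is correct and follows essentially the same route as the paper: a Foster--Lyapunov drift inequality for $V_{1}$ obtained from the one-step contraction $|y+\eta b(y)|^{2}\le(1-c\eta)|y|^{2}+C\eta$ via \eqref{diss} and \eqref{linear}, combined with a small-set/minorization condition, and then an appeal to the Meyn--Tweedie geometric ergodicity theorem. The paper treats the stable-noise chain first and dismisses irreducibility as ``standard'' with a reference, whereas you treat the Pareto chain first and spell out the minorization via the explicit transition density; these are cosmetic differences only.
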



\begin{lemma}\label{E-Mmoment}
   Let \textbf{Assumption A} hold and denote by $(Y_{k})_{k \ge 0}$ and $(\tilde{Y}_{k})_{k \ge 0}$ the Markov chains defined by \eqref{EM2} and \eqref{EM}, respectively. If the step size satisfies   $\eta < \min\left\{1,\frac{\theta_{1}}{8\theta_{2}^{2}},\frac{1}{\theta_{1}}\right\}$,
   then there is a constant $C>0$, which is independent of $\eta$, such that
    \begin{align}\label{momentd2}
        \Ee |Y_{k}^{x}|^{\beta} &\leq C(1+|x|^{\beta}),\\
    \label{momentd}
        \Ee |\tilde{Y}_{k}^{x}| &\leq C(1+|x|),
    \end{align}
   hold  for any $\beta\in[1,\alpha)$, $x\in\mathds{R}^{d}$ and $k>0$.
\end{lemma}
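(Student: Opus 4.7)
The plan is to establish a discrete Foster--Lyapunov inequality
\begin{equation*}
\Ee\bigl[V_\beta(Y_{k+1}^x)\,\big|\,Y_k^x=y\bigr]\le (1-c\eta)\,V_\beta(y)+C\eta
\end{equation*}
for constants $c,C>0$ independent of $k$ and $\eta$, with the test function $V_\beta(y)=(1+|y|^2)^{\beta/2}$. Iterating in $k$ gives $\sup_k\Ee V_\beta(Y_k^x)\le V_\beta(x)+C/c$, and since $|y|^\beta\le V_\beta(y)\le 2^{\beta/2}(1+|y|^\beta)$ this implies \eqref{momentd2}; the estimate \eqref{momentd} for the Pareto-driven chain follows from the same argument specialised to $\beta=1$. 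The key structural facts I exploit are $|\nabla V_\beta(y)|\le\beta V_{\beta-1}(y)$ and the Hessian bound $\|\nabla^2 V_\beta(y)\|_{\mathrm{HS}}\le C(1+|y|^2)^{(\beta-2)/2}$, which is uniform in $y$ for $\beta\in[1,2]$.

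Conditioning on $Y_k=y$ and writing $\bar y:=y+\eta b(y)$, the one-step update reads $Y_{k+1}=\bar y+\Delta$, where $\Delta=Z_{(k+1)\eta}-Z_{k\eta}$ for the stable scheme and $\Delta=\eta^{1/\alpha}\sigma^{-1}\widetilde Z_{k+1}$ for the Pareto scheme. The drift step is purely deterministic: a second-order Taylor expansion of $V_\beta$ at $y$, combined with the dissipativity \eqref{diss} applied at $(y,0)$, the linear-growth bound \eqref{linear}, and the Hessian estimate, yields $V_\beta(\bar y)\le(1-c_1\eta)V_\beta(y)+C\eta$ once $\eta$ is as small as the hypothesis requires, with $c_1$ essentially coming from $\theta_1$.

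The stochastic step is the delicate part. Since $\Delta$ is spherically symmetric with finite first moment (by $\alpha>1$), we have $\Ee\Delta=0$, so Taylor expansion of $V_\beta$ at $\bar y$ and taking conditional expectation give
\begin{equation*}
\Ee\bigl[V_\beta(\bar y+\Delta)-V_\beta(\bar y)\,\big|\,\mathcal{F}_{k\eta}\bigr]=\Ee\bigl[R(\bar y,\Delta)\,\big|\,\mathcal{F}_{k\eta}\bigr],\qquad R(\bar y,\Delta):=V_\beta(\bar y+\Delta)-V_\beta(\bar y)-\langle\nabla V_\beta(\bar y),\Delta\rangle.
\end{equation*}
The bound $|R|\le C|\Delta|^2$ from the Hessian estimate is not usable globally since $\Ee|\Delta|^2=\infty$ for $\alpha<2$; I therefore split at a threshold $|\Delta|=R_0$, with $R_0$ a large fixed constant. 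On $\{|\Delta|\le R_0\}$ the quadratic bound together with the truncated moment $\Ee[|\Delta|^2\II_{\{|\Delta|\le R_0\}}]\le C(R_0)\eta$ (valid for both innovations, whose densities scale as $\eta^{1/\alpha}$ with tails $\sim|z|^{-\alpha-d}$) produces a $C\eta$ contribution. On $\{|\Delta|>R_0\}$ one uses the coarse estimate $|R|\le C(V_\beta(\bar y)+V_\beta(\Delta)+V_{\beta-1}(\bar y)|\Delta|)$ together with the tail moments $\Ee\II_{\{|\Delta|>R_0\}}\le C\eta R_0^{-\alpha}$, $\Ee[|\Delta|\II_{\{|\Delta|>R_0\}}]\le C\eta R_0^{1-\alpha}$ (by $\alpha>1$) and $\Ee[|\Delta|^\beta\II_{\{|\Delta|>R_0\}}]\le C\eta R_0^{\beta-\alpha}$ (by $\beta<\alpha$). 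The resulting bound has the form $c_2(R_0)\eta V_\beta(\bar y)+C\eta$ with $c_2(R_0)\to 0$ as $R_0\to\infty$; choosing $R_0$ so that $c_2(R_0)<c_1/2$ and combining with the drift estimate gives the Lyapunov inequality with $c=c_1/2$. The main obstacle is precisely this control of $R$ against an increment of infinite variance; the key is to use $\beta<\alpha$ together with the freedom to pick $R_0$ large so that the large-jump contribution becomes sub-dominant to the drift contraction, after which the iteration is routine.
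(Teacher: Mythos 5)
Your proposal is correct, and the overall skeleton coincides with the paper's: a discrete Foster--Lyapunov inequality for the same weight $V_\beta$, obtained by separating the deterministic drift step from the noise increment, followed by iteration. The drift estimate is the same in substance (the paper writes the increment as $\int_0^\eta\langle\nabla V_\beta(Y_k+rb(Y_k)),b(Y_k)\rangle\,\dif r$ and estimates the integrand directly, rather than using a second-order Taylor remainder, but the dissipativity input and the outcome $-c\eta V_\beta+C\eta$ are identical). Where you genuinely diverge is the stochastic step. The paper applies Dynkin's formula to the stable increment, writes $\Ee[V_\beta(\bar y+Z_\eta)-V_\beta(\bar y)]=\int_0^\eta\Ee[(-\Delta)^{\alpha/2}V_\beta(\bar y+Z_r)]\,\dif r$, and invokes the pointwise bound \eqref{fracbound} on $(-\Delta)^{\alpha/2}V_\beta$ (itself proved by splitting the jump measure at the \emph{fixed} radius $1$), absorbing the resulting $|x|^{\beta-1}$ term into the drift contraction via Young's inequality. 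You instead exploit $\Ee\Delta=0$ to kill the first-order Taylor term and split at a \emph{large adjustable} threshold $R_0$, using $\beta<\alpha$ for the heavy-tail moments and sending the coefficient $c_2(R_0)\to0$ so that the large-jump contribution is dominated by the drift contraction. Both routes are sound; yours has the concrete advantage of treating the stable and Pareto innovations on exactly the same footing (the paper's remark that \eqref{momentd} ``can be proved in the same way'' glosses over the fact that Dynkin's formula for the $\alpha$-stable generator does not literally apply to the Pareto-driven chain, which requires a direct density computation), while the paper's route is shorter in context because it recycles the bound \eqref{fracbound} already established for Proposition~\ref{W1.1-0}. One small point worth making explicit in your write-up: in the drift step the second-order remainder produces a term of size $\eta^2\theta_2^2|y|^2(1+|\xi|^2)^{(\beta-2)/2}$ with $\xi$ on the segment from $y$ to $\bar y$; you need $\eta\theta_2<1/2$ (which does follow from $\eta<\min\{\theta_1/(8\theta_2^2),1/\theta_1\}$) to compare $|\xi|$ with $|y|$ and conclude this is $O(\eta^2)V_\beta(y)$, absorbable into $-c_1\eta V_\beta(y)$.
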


\section{Proof of Theorems \ref{main} and \ref{main2}}\label{mainproof}

We begin with several auxiliary lemmas which will be used to prove Theorems \ref{main} and \ref{main2}.

\subsection{Auxiliary lemmas}

The first auxiliary lemma is about the regularity of the semigroup induced by $(X_t)_{t \ge 0}$.
\begin{lemma}\label{regular}
    Let $h\in\mathrm{Lip}(1)$ and $X_{t}^{x}$ be the solution to the SDE \eqref{SDE}. For all vectors $v,v_{1},v_{2}\in\rd$ and $t\in(0,1]$, we have
    \begin{gather}\label{semi1}
        |\nabla_{v}P_{t}h(x)|
        \leq \eup^{\theta_{2}}\left|v\right|
    \intertext{and}\label{semi2}
        |\nabla_{v_{2}}\nabla_{v_{1}}P_{t}h(x)|
        \leq C t^{-1/\alpha}\left|v_{1}\right|\left|v_{2}\right|,
    \end{gather}
    for some constant $C>0$.
\end{lemma}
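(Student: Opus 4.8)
\textbf{Proof plan for Lemma \ref{regular}.}

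The plan is to prove both estimates via Malliavin calculus for the jump process $(Z_t)_{t\ge 0}$, following the time-change framework of \cite{Z}. First I would set up the first-variation (tangent) process. Differentiating the SDE \eqref{SDE} in the direction $v$ gives the linear equation
\begin{align}\label{var1}
    \dif(\nabla_v X_t^x) = \nabla_{\nabla_v X_t^x} b(X_t^x)\,\dif t, \quad \nabla_v X_0^x = v,
\end{align}
which has no stochastic term because the noise in \eqref{SDE} is additive. The dissipativity bound \eqref{gra}, $|\nabla_w b(x)|\le\theta_2|w|$, combined with Gronwall's inequality immediately yields $|\nabla_v X_t^x|\le\eup^{\theta_2 t}|v|$, and hence for $h\in\mathrm{Lip}(1)$ the chain rule gives $|\nabla_v P_t h(x)| = |\Ee\langle\nabla h(X_t^x),\nabla_v X_t^x\rangle|\le\eup^{\theta_2 t}|v|\le\eup^{\theta_2}|v|$ for $t\in(0,1]$. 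This settles \eqref{semi1}. Similarly I would write down the second-variation process $\nabla_{v_2}\nabla_{v_1}X_t^x$, which solves an affine equation driven by $\nabla^2 b$ and the first variations; using \eqref{gra} again one gets a bound of the form $|\nabla_{v_2}\nabla_{v_1}X_t^x|\le C|v_1||v_2|$ on $[0,1]$. The point, however, is that a naive chain rule for $\nabla_{v_2}\nabla_{v_1}P_t h(x)$ needs $\nabla^2 h$, which does not exist for a mere Lipschitz $h$; this is exactly why one needs Bismut's formula to shift one derivative onto the (smooth) density of the noise and produce the integrable singularity $t^{-1/\alpha}$.

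So the heart of the proof is \eqref{semi2}. The strategy: apply $\nabla_{v_1}$ directly (it costs nothing, by the above), so it suffices to bound $|\nabla_{v_2}(\nabla_{v_1}P_t h)(x)|$ where $g := \nabla_{v_1}P_t h$ is a bounded function with $\|g\|_\infty\le\eup^{\theta_2}|v_1|$. Now I would invoke the Bismut-type integration-by-parts formula for $P_t$ established via Malliavin calculus for the $\alpha$-stable process. Concretely, using the subordination/time-change representation of $Z_t$ as a Brownian motion time-changed by an $\alpha/2$-stable subordinator (the framework of \cite{Z}), one conditions on the subordinator $S$, performs Gaussian Malliavin calculus on the Brownian bridge part, and obtains
\begin{align}\label{bismut}
    \nabla_{v_2}(\nabla_{v_1}P_t h)(x) = \Ee\left[\nabla_{v_1}P_t h(X_t^x)\cdot \Xi_t(v_2)\right]
\end{align}
for a suitable weight $\Xi_t(v_2)$ built from the inverse time-change and the first-variation process, whose $L^1$-norm is controlled by $\Ee|\Xi_t(v_2)|\le C t^{-1/\alpha}|v_2|$. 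Combining \eqref{bismut} with $\|\nabla_{v_1}P_t h\|_\infty\le\eup^{\theta_2}|v_1|$ gives \eqref{semi2}. The $t^{-1/\alpha}$ scaling is natural: it is precisely the self-similarity exponent of the $\alpha$-stable process, $Z_{ct}\overset{d}{=}c^{1/\alpha}Z_t$, so that one derivative of the heat kernel costs $t^{-1/\alpha}$.

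The main obstacle is making the Bismut formula \eqref{bismut} rigorous and getting the sharp $t^{-1/\alpha}$ bound on the Malliavin weight, because the $\alpha$-stable noise is a pure-jump process and its density has no closed form. The technical work — controlling negative moments of the time-change, verifying that the first-variation process is non-degenerate (here trivial, since $\nabla X_t^x$ is an invertible matrix flow by \eqref{var1}), and handling the drift $b$ inside the Malliavin covariance — is what Section \ref{A} is devoted to; one must be careful that the constant $C$ in \eqref{semi2} depends only on $\alpha,d$ and the structural constants $\theta_1,\theta_2,\theta_3$, not on $t$ or $x$. A secondary point is the restriction $t\in(0,1]$: it lets us absorb the exponential factors $\eup^{\theta_2 t}$ into the constant and avoids having to track the long-time behaviour of the variation processes, which is instead handled separately through the dissipativity bound \eqref{diss} when these local estimates are pieced together in Section \ref{mainproof}.
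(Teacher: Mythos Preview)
Your plan for \eqref{semi1} is correct and matches the paper exactly. For \eqref{semi2} your high-level strategy --- subordination, conditioning on the subordinator, Gaussian Bismut formula, and the negative subordinator moment producing $t^{-1/\alpha}$ --- is also the paper's strategy, but the displayed identity you wrote,
\[
\nabla_{v_2}\bigl(\nabla_{v_1}P_t h\bigr)(x)=\Ee\bigl[\nabla_{v_1}P_t h(X_t^x)\,\Xi_t(v_2)\bigr],
\]
is not correct. By the first-derivative Bismut formula the right-hand side equals $\nabla_{v_2}P_t\bigl(\nabla_{v_1}P_t h\bigr)(x)$, which carries an \emph{extra} application of $P_t$ and is not the quantity on the left. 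There is no Bismut formula that lets you differentiate an arbitrary bounded function $g$ at $x$ by averaging $g(X_t^x)$ against a weight; the integration-by-parts is tied to the semigroup, so you cannot simply set $g=\nabla_{v_1}P_th$ and invoke it.

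The paper closes this gap differently. You correctly noted that $\nabla^2 h$ need not exist, but did not say how to work around it; the paper mollifies $h$ to a smooth $h_\epsilon$ with $\|\nabla h_\epsilon\|_\infty\le 1$. For smooth $h_\epsilon$ the chain rule gives
\[
\nabla_{v_2}\nabla_{v_1}P_t h_\epsilon(x)
=\Ee\bigl[\nabla_{\nabla_{v_2}\nabla_{v_1}X_t^x}h_\epsilon(X_t^x)\bigr]
+\Ee\bigl[\nabla_{\nabla_{v_2}X_t^x}\nabla_{\nabla_{v_1}X_t^x}h_\epsilon(X_t^x)\bigr].
\]
The first term is harmless by your second-variation bound. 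The dangerous second term contains $\nabla^2 h_\epsilon$; here Lemma~\ref{implement} uses the time-changed Brownian Bismut formula to replace the direction $\nabla_{v_2}X_t^x$ by a Malliavin derivative and then integrate by parts, producing a weight $S_t^{-1}\int_0^t\langle\nabla_{v_2}X_s^x,\dif W_{S_s}\rangle$ and leaving only $\nabla h_\epsilon$ inside the expectation. After Cauchy--Schwarz and It\^o's isometry the factor $t^{-1/\alpha}$ emerges from $\Ee\bigl[S_t^{-1/2}\bigr]\le Ct^{-1/\alpha}$. Because the resulting bound depends only on $\|\nabla h_\epsilon\|_\infty\le 1$, one may let $\epsilon\to 0$. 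So the derivative that is ``shifted'' is one of the two directions \emph{inside} the second-order term $\nabla^2 h_\epsilon(X_t^x)$, not the outer $\nabla_{v_2}$ acting on your function $g$.
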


{
\begin{remark}
We will prove the above lemma using Malliavin's calculus \cite{Nor86} in Section \ref{A} further down. A very careful and knowledgeable referee suggested a shorter argument which avoids the use of Bismut's formula and Norris' formalism. The idea of
her/his proof is to use the so called "variance of constants method" to establish the
corresponding Bismut--Elworthy--Li formula. The alternative proof of Lemma \ref{regular} will be added
in Appendix \ref{supple} below.
\end{remark}
}

Using the inequalities \eqref{moment} and \eqref{momentd2}, we can obtain the following estimates:
\begin{lemma}
Let $(X_{t})_{t\geq 0}$ be the solution to the SDE \eqref{SDE} and $(Y_{k})_{k \ge 0}$ be the Markov chains defined by \eqref{EM2}. If the step size satisfies   $\eta < \min\left\{1,\frac{\theta_{1}}{8\theta_{2}^{2}},\frac{1}{\theta_{1}}\right\}$, then the following estimates
hold for all $t\in(0,1]$, $\beta\in[1,\alpha)$:
\begin{align}\label{1}
    \Ee |Y_{1}^{x}-x|^{\beta} &\leq C(1+|x|^{\beta})\eta^{\beta/\alpha},\\
\label{2}
    \Ee |X_{t}^{x}-x|^{\beta} &\leq C(1+|x|^{\beta})t^{\beta/\alpha},\\
\label{3}
    \Ee |X_{\eta}^{x}-Y_{1}^{x}|^{\beta} &\leq C(1+|x|^{\beta})\eta^{\beta+\frac{\beta}{\alpha}}.
\end{align}
\end{lemma}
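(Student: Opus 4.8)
The plan is to treat the three inequalities in the order \eqref{1}, \eqref{2}, \eqref{3}, since the third will be obtained from a Duhamel/Gronwall comparison that uses the first two. Throughout, the basic fact I would exploit is that the increment $Z_\eta$ of a rotationally invariant $\alpha$-stable process satisfies $\Ee|Z_\eta|^\beta = c_{\alpha,\beta}\,\eta^{\beta/\alpha}$ for every $\beta \in [1,\alpha)$ (by the scaling $Z_\eta \overset{d}{=} \eta^{1/\alpha} Z_1$ and the finiteness of absolute moments of order $<\alpha$); this is the only place where the restriction $\beta<\alpha$ is used.

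For \eqref{1}: from the definition \eqref{EM2}, $Y_1^x - x = \eta b(x) + Z_\eta$. Using the elementary inequality $|a+b|^\beta \le 2^{\beta-1}(|a|^\beta+|b|^\beta)$, the linear growth bound \eqref{linear} (so $|b(x)| \le |b(0)| + \theta_2|x| \le C(1+|x|)$), and $\eta<1$ (so $\eta^\beta \le \eta^{\beta/\alpha}$ because $\beta > \beta/\alpha$), I get $\Ee|Y_1^x-x|^\beta \le C(1+|x|^\beta)\eta^{\beta/\alpha} + c_{\alpha,\beta}\eta^{\beta/\alpha}$, which is the claim. For \eqref{2}: from the mild/integral form of \eqref{SDE}, $X_t^x - x = \int_0^t b(X_s^x)\,\dif s + Z_t$. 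I bound $\Ee\big|\int_0^t b(X_s^x)\,\dif s\big|^\beta \le t^{\beta-1}\int_0^t \Ee|b(X_s^x)|^\beta\,\dif s$ (Jensen in the time variable), then use \eqref{linear} together with the moment bound \eqref{moment} of Proposition~\ref{W1.1-0} to get $\Ee|b(X_s^x)|^\beta \le C(1+|x|^\beta)$ uniformly in $s\in[0,t]\subseteq[0,1]$; hence the drift contributes $C(1+|x|^\beta)t^\beta \le C(1+|x|^\beta)t^{\beta/\alpha}$, and the noise term contributes $c_{\alpha,\beta}t^{\beta/\alpha}$, giving \eqref{2}.

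For \eqref{3}, which is the most delicate of the three: write, for $s \in [0,\eta]$,
\begin{align*}
    X_s^x - Y_1^x\big|_{\text{(interp.)}} = \int_0^s \big(b(X_r^x) - b(x)\big)\,\dif r,
\end{align*}
i.e.\ at time $\eta$ we have $X_\eta^x - Y_1^x = \int_0^\eta \big(b(X_r^x)-b(x)\big)\,\dif r$, because the stable increments in \eqref{SDE} and \eqref{EM2} over $[0,\eta]$ coincide and cancel. Then $\Ee|X_\eta^x - Y_1^x|^\beta \le \eta^{\beta-1}\int_0^\eta \Ee|b(X_r^x)-b(x)|^\beta\,\dif r$, and by the Lipschitz bound from \eqref{gra} (the first inequality there gives $|b(y)-b(x)| \le \theta_2|y-x|$) this is $\le \theta_2^\beta\,\eta^{\beta-1}\int_0^\eta \Ee|X_r^x - x|^\beta\,\dif r$. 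Now plug in \eqref{2}: $\Ee|X_r^x-x|^\beta \le C(1+|x|^\beta)r^{\beta/\alpha}$, so the integral is $\le C(1+|x|^\beta)\,\eta^{\beta/\alpha+1}$, and multiplying by $\eta^{\beta-1}$ yields $\Ee|X_\eta^x-Y_1^x|^\beta \le C(1+|x|^\beta)\eta^{\beta+\beta/\alpha}$, exactly \eqref{3}.

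The only genuine subtlety — and the step I would flag as the main obstacle — is the justification that the stable parts really cancel in the difference $X_\eta^x - Y_1^x$, i.e.\ that one may couple \eqref{SDE} and \eqref{EM2} on the same probability space with the same driving process $Z$ over $[0,\eta]$; this is immediate from the construction but should be stated, since without it one would be forced to carry an extra $\Ee|Z_\eta|^\beta$ term and lose the rate. A minor technical point is the use of \eqref{moment} to control $\Ee|b(X_s^x)|^\beta$ uniformly on $[0,1]$ — this is where Proposition~\ref{W1.1-0} enters — and the repeated use of $\eta<1$ to absorb higher powers of $\eta$ into $\eta^{\beta/\alpha}$; both are routine.
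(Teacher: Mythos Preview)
Your proposal is correct and matches the paper's own proof essentially line by line: the paper uses the same splitting $|a+b|^\beta\le 2(|a|^\beta+|b|^\beta)$ and scaling $\Ee|Z_\eta|^\beta=\eta^{\beta/\alpha}\Ee|Z_1|^\beta$ for \eqref{1}, the same Jensen/H\"older-in-time bound combined with \eqref{moment} for \eqref{2}, and exactly the cancellation $X_\eta^x-Y_1^x=\int_0^\eta\big(b(X_r^x)-b(x)\big)\,\dif r$ followed by the Lipschitz estimate and \eqref{2} for \eqref{3}. Your remark about the coupling (same driving $Z$ over $[0,\eta]$) is precisely the implicit assumption the paper makes when writing that equality, so there is no gap.
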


\begin{proof}
The first inequality follows immediately from
\begin{align*}
    \Ee |Y_{1}^{x}-x|^{\beta}
    = \Ee\left|\eta b(x) + Z_{\eta}\right|^{\beta}
    \leq 2\left[\eta^{\beta}|b(x)|^{\beta}+\Ee|Z_{\eta}|^{\beta}\right]
    \leq C(1+|x|^{\beta})\eta^{\beta/\alpha}.
\end{align*}

From the H\"{o}lder inequality and \eqref{moment}, we obtain
\begin{align*}
    \Ee|X_{t}^{x}-x|^{\beta}
    &=\Ee\left|\int_{0}^{t}b(X_{s}^{x})\,\dif s+Z_{t}\right|^{\beta}\\
    &\leq 2\Ee\left|\int_{0}^{t}b(X_{s}^{x})\,\dif s\right|^{\beta}+2|Z_{t}|^{\beta}\\
    &\leq 2t^{\beta-1}\int_{0}^{t}\Ee|b(X_{s}^{x})|^{\beta}\,\dif s+2\Ee|Z_{1}|^{\beta}t^{\frac{\beta}{\alpha}}\\
    &\leq C(1+|x|^{\beta})t^{\beta/\alpha},
\end{align*}
which implies the second inequality.

For the last inequality, the H\"{o}lder inequality, \eqref{linear} and \eqref{2} imply
\begin{align*}
    \Ee |X_{\eta}^{x}-Y_{1}^{x}|^{\beta}
    &=\Ee \left|\int_{0}^{\eta}[b(X_{s}^{x})-b(x)]\,\dif s\right|^{\beta}\\
    &\leq \eta^{\beta-1}\int_{0}^{\eta}\Ee|b(X_{s}^{x})-b(x)|^{\beta}\,\dif s\\
    &\leq \theta\eta^{\beta-1}\int_{0}^{\eta}\Ee|X_{s}^{x}-x|^{\beta}\,\dif s\\
    &\leq C(1+|x|^{\beta})\eta^{\beta-1}\int_{0}^{\eta}s^{\frac{\beta}{\alpha}}\,\dif s\\
    &\leq C(1+|x|^{\beta})\eta^{\beta+\frac{\beta}{\alpha}}.
    \qedhere
\end{align*}
\end{proof}

In order to prove Theorem \ref{main}, we need the following two lemmas. The first is just an intermediate step for the proof of the second lemma, which is the key to proving Theorem~\ref{main}. Notice that the fractional Laplacian operator $(-\Delta)^{\alpha/2}$ is the infinitesimal generator  of the rotationally invariant $\alpha$-stable L\'evy process process $(Z_{t})_{t\geq0}$, which is defined as a principal value (p.v.) integral: for any $f\in \mathcal{C}^2(\rd,\real)$,
\begin{align}\label{frac1}
    (-\Delta)^{\alpha/2}f(x)
    = C_{d,\alpha}\cdot\mathrm{p.v.}\int_{\rd} \left(f(x+y)-f(x)\right) \frac{\dif y}{|y|^{\alpha+d}}.
\end{align}
\begin{lemma}\label{laplace}
    Let $\alpha\in(1,2)$ and $f:\mathds{R}^{d}\to \mathds{R}$ satisfying $\|\nabla f\|_{\infty}<\infty$ and $\|\nabla^{2}f\|_{\mathrm{HS},\infty}<\infty$. For all $x,y\in\rd$ one has
    \begin{equation}\label{crucialbis}
        \left|(-\Delta)^{\alpha/2}f(x) - (-\Delta)^{\alpha/2}f(y)\right|
        \leq
        \frac{C_{d,\alpha}\|\nabla^{2}f\|_{\mathrm{HS},\infty}\sigma_{d-1}}{(2-\alpha)(\alpha-1)}|x-y|^{2-\alpha}.
    \end{equation}
\end{lemma}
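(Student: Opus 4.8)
The plan is to control the difference $(-\Delta)^{\alpha/2}f(x) - (-\Delta)^{\alpha/2}f(y)$ by splitting the singular integral in \eqref{frac1} into a region near the origin, where we exploit the second-order smoothness of $f$, and a region away from the origin, where first-order smoothness suffices; the cut-off radius will be chosen as a function of $|x-y|$ to balance the two contributions. First I would write, using \eqref{frac1} and the fact that the principal value kills the linear term,
\begin{align*}
    (-\Delta)^{\alpha/2}f(x)
    = C_{d,\alpha}\int_{|y|\le r}\bigl(f(x+y)-f(x)-\langle\nabla f(x),y\rangle\bigr)\frac{\dif y}{|y|^{\alpha+d}}
    + C_{d,\alpha}\int_{|y|> r}\bigl(f(x+y)-f(x)\bigr)\frac{\dif y}{|y|^{\alpha+d}},
\end{align*}
for an arbitrary $r>0$ (the symmetry of the kernel makes the $\langle\nabla f(x),y\rangle$ term integrate to zero on $|y|\le r$, so adding it is harmless and makes the near-field integrand $O(\|\nabla^2 f\|_\infty|y|^2)$). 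Writing the analogous decomposition at $y$ with the \emph{same} radius $r$ and subtracting, I would estimate the near-field difference and the far-field difference separately.

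For the near field, bounding each of the two integrands (at $x$ and at $y$) by $\tfrac12\|\nabla^2 f\|_{\mathrm{HS},\infty}|y|^2$ via Taylor's theorem with integral remainder, and integrating in spherical coordinates, gives a contribution of order $\|\nabla^2 f\|_{\mathrm{HS},\infty}\sigma_{d-1}r^{2-\alpha}/(2-\alpha)$. For the far field, I would instead difference \emph{inside} the integrand: $\bigl|f(x+y)-f(x)-f(y'+y)+f(y')\bigr|$ with $y'=y$ (the second argument), which by the mean value theorem along the segment joining the two arguments is at most $\|\nabla f\|_\infty\cdot 2|x-y|$ — wait, more precisely $\le \|\nabla f\|_\infty|x-y| + \|\nabla f\|_\infty|x-y| = 2\|\nabla f\|_\infty|x-y|$ is wasteful; the sharp bound is to note $g(z):=f(x+z-y')$ argument-shift tricks, but the clean route is: $|[f(x+y)-f(y+y')] - [f(x)-f(y')]|$, and since $f(x+y)-f(y+y')$ and $f(x)-f(y')$ are both increments of $f$ over the vector $x-y'$ translated by $y$ resp.\ by $0$, each is bounded by $\|\nabla f\|_\infty|x-y|$; however this again gives $2\|\nabla f\|_\infty|x-y|$. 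To get the stated constant with only $\|\nabla^2 f\|$ appearing, I would instead observe that the far-field \emph{difference} can itself be rewritten using the second derivative: $\int_{|y|>r}(\nabla f(\xi_y)-\nabla f(\zeta_y))\cdot(\dots)$, or more directly bound $|f(x+y)-f(x)-f(y+y')+f(y')| \le \|\nabla^2 f\|_{\mathrm{HS},\infty}\,|x-y'|\,|x-y| \wedge 2\|\nabla f\|_\infty|x-y|$; using the $\|\nabla^2 f\|$ form is not integrable at infinity, so in fact the far field must use $\|\nabla f\|_\infty$. Re-examining the target inequality, only $\|\nabla^2 f\|_{\mathrm{HS},\infty}$ appears, which suggests the intended argument does \emph{not} split at all but rather uses the representation
\begin{align*}
    (-\Delta)^{\alpha/2}f(x)-(-\Delta)^{\alpha/2}f(y)
    = C_{d,\alpha}\int_{\rd}\Bigl(\int_0^1\bigl(\nabla f(x+sy)-\nabla f(x)-\nabla f(y+sy)+\nabla f(y)\bigr)\,\dif s\Bigr)\cdot\frac{y\,\dif y}{|y|^{\alpha+d}},
\end{align*}
and then bounds the bracket by $\min\{\|\nabla^2 f\|_{\mathrm{HS},\infty}|y|,\ \|\nabla^2 f\|_{\mathrm{HS},\infty}|x-y|\}$ — the first from $|sy|\le|y|$, the second from differencing the two arguments $x$ and $y$ — and splits the $y$-integral at $|y|=|x-y|$.

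Carrying that through: on $|y|\le|x-y|$ use the bound $2\|\nabla^2 f\|_{\mathrm{HS},\infty}|y|\cdot|y|^{-\alpha-d}$ giving $\int_{|y|\le|x-y|}|y|^{1-\alpha-d}\dif y = \sigma_{d-1}|x-y|^{2-\alpha}/(2-\alpha)$; on $|y|>|x-y|$ use $2\|\nabla^2 f\|_{\mathrm{HS},\infty}|x-y|\cdot|y|^{-\alpha-d}$ giving $2\|\nabla^2 f\|_{\mathrm{HS},\infty}|x-y|\cdot\sigma_{d-1}|x-y|^{-\alpha}/\alpha$, hence altogether a bound of the form $C_{d,\alpha}\|\nabla^2 f\|_{\mathrm{HS},\infty}\sigma_{d-1}|x-y|^{2-\alpha}\bigl(\tfrac1{2-\alpha}+\tfrac1\alpha\bigr)$, and since $\alpha>\alpha-1$ one has $\tfrac1\alpha<\tfrac1{\alpha-1}$, so this is dominated by $\tfrac{C_{d,\alpha}\|\nabla^2 f\|_{\mathrm{HS},\infty}\sigma_{d-1}}{(2-\alpha)(\alpha-1)}|x-y|^{2-\alpha}$, matching \eqref{crucialbis} (up to tracking the factor $2$ into the $(\alpha-1)$ denominator, which is where the precise form comes from). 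The main obstacle, and the step I would be most careful with, is the justification that the principal-value integral may be rewritten with the gradient-difference integrand without boundary terms — i.e.\ interchanging the principal value, the $s$-integral, and the limit $r\to0$ — which uses $\|\nabla^2 f\|_{\mathrm{HS},\infty}<\infty$ to make the near-origin integrand absolutely integrable and the antisymmetry of $y/|y|^{\alpha+d}$ to discard the first-order term; the rest is a routine spherical-coordinates computation.
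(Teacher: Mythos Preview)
Your final approach—rewriting via the gradient, bounding the four-term bracket $|\nabla f(x+sz)-\nabla f(x)-\nabla f(y+sz)+\nabla f(y)|$ by $\min\{2\|\nabla^2 f\|_{\mathrm{HS},\infty}\,s|z|,\;2\|\nabla^2 f\|_{\mathrm{HS},\infty}\,|x-y|\}$, and splitting the $z$-integral at $|z|=|x-y|$—is precisely the paper's argument; the paper carries it out in polar coordinates and inserts the first-order compensator only on the inner region $(0,R)$, so that the outer bracket is the two-term $\nabla f(x+\theta s)-\nabla f(y+\theta s)$, and this is what produces the exact constant $\tfrac{1}{2-\alpha}+\tfrac{1}{\alpha-1}=\tfrac{1}{(2-\alpha)(\alpha-1)}$. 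One bookkeeping slip: your far-field integrand must retain the factor $|z|$ from the dot product with $z$, so the correct outer integral is $\int_{|z|>R}|z|^{1-\alpha-d}\,\dif z=\sigma_{d-1}R^{1-\alpha}/(\alpha-1)$ rather than $\sigma_{d-1}R^{-\alpha}/\alpha$; with that fix (and the stray factor $2$ absorbed as you anticipated) the bound \eqref{crucialbis} follows.
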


\begin{proof}
From the definition of the fractional Laplacian \eqref{frac1} and the symmetry of the representing measure we have for any $R>0$
\begin{align*}
    (-\Delta)^{\alpha/2}f(x)
    &= C_{d,\alpha}\int_{\mathds{S}^{d-1}}\int_{0}^{\infty}\frac{f(x+r\theta)-f(x)-r\left\langle\theta,\nabla f(x)\right\rangle\II_{(0,R)}(r)}{r^{\alpha+1}}\,\dif r\,\dif\theta\\
    &= C_{d,\alpha}\int_{\mathds{S}^{d-1}} \int_{0}^{R}\int_{0}^{r}\frac{\left\langle\theta,\nabla f(x+\theta s)-\nabla f(x)\right\rangle}{r^{\alpha+1}}\,\dif s\,\dif r\,\dif \theta\\
    &\quad\mbox{}+C_{d,\alpha}\int_{\mathds{S}^{d-1}} \int_{R}^{\infty}\int_{0}^{r}\frac{\left\langle\theta,\nabla f(x+\theta s)\right\rangle}{r^{\alpha+1}}\,\dif s\,\dif r\,\dif \theta
\end{align*}
Then, for all $x,y\in\rd$,
\begin{align*}
    &\big|(-\Delta)^{\alpha/2}f(x) - (-\Delta)^{\alpha/2}f(y)\big|\\
    &\quad\leq C_{d,\alpha}\int_{\mathds{S}^{d-1}} \int_{0}^{R}\int_{0}^{r}\frac{\left|\nabla f(x+\theta s)-\nabla f(x)-\nabla f(y+\theta s)+\nabla f(y)\right|}{r^{\alpha+1}}\,\dif s\,\dif r\,\dif \theta\\
    &\qquad\mbox{}+C_{d,\alpha}\int_{\mathds{S}^{d-1}} \int_{R}^{\infty}\int_{0}^{r}\frac{\left|\nabla f(x+\theta s)-\nabla f(y+\theta s)\right|}{r^{\alpha+1}}\,\dif s\,\dif r\,\dif \theta.
\end{align*}
For the first integral we have
\begin{align*}
    &C_{d,\alpha}\int_{\mathds{S}^{d-1}} \int_{0}^{R}\int_{0}^{r}\frac{\left|\nabla f(x+\theta s)-\nabla f(x)-\nabla f(y+\theta s)+\nabla f(y)\right|}{r^{\alpha+1}}\,\dif s\,\dif r\,\dif \theta\\
    &\quad\leq C_{d,\alpha}\int_{\mathds{S}^{d-1}} \int_{0}^{R}\int_{0}^{r}\frac{\left|\nabla f(x+\theta s)-\nabla f(x)\right|+\left|\nabla f(y+\theta s)-\nabla f(y)\right|}{r^{\alpha+1}}\,\dif s\,\dif r\,\dif \theta\\
    &\quad\leq 2C_{d,\alpha}\|\nabla^{2}f\|_{\mathrm{HS},\infty}\int_{\mathds{S}^{d-1}}\int_{0}^{r}
    \frac{s}{r^{\alpha+1}}\,\dif s\,\dif r\,\dif\theta=\frac{C_{d,\alpha}\|\nabla^{2}f\|_{\mathrm{HS},\infty}\sigma_{d-1}}{2-\alpha}\,R^{2-\alpha},
\end{align*}
and for the second term we get
\begin{align*}
    &C_{d,\alpha}\int_{\mathds{S}^{d-1}} \int_{R}^{\infty}\int_{0}^{r}\frac{\left|\nabla f(x+\theta s)-\nabla f(y+\theta s)\right|}{r^{\alpha+1}}\,\dif s\,\dif r\,\dif \theta\\
    &\quad\leq C_{d,\alpha}\|\nabla^{2}f\|_{\mathrm{HS},\infty}\int_{\mathds{S}^{d-1}}
    \int_{R}^{\infty}\int_{0}^{r}\frac{|x-y|}
    {r^{\alpha+1}}\,\dif s\,\dif r\,\dif\theta\\
    &\quad= \frac{C_{d,\alpha}\|\nabla^{2}f\|_{\mathrm{HS},\infty}\sigma_{d-1}}{\alpha-1}\,|x-y|R^{1-\alpha}.
\end{align*}
Hence, the assertion follows upon taking $R=|x-y|$.
\end{proof}

\begin{lemma}\label{compare}
    Let $(X_{t})_{t\geq 0}$ and $(\tilde{Y}_{k})_{k\geq 0}$ be defined by \eqref{SDE} and \eqref{EM}, respectively. There exists a constant $C>0$ such that for all $x\in\rd$, $\eta\in(0,1)$, $f:\mathds{R}^{d}\to \mathds{R}$ satisfying $\|\nabla f\|_{\infty}<\infty$ and $\|\nabla^{2}f\|_{\mathrm{HS},\infty}<\infty$,
    \begin{align*}
        |P_\eta f(x)-\tilde{Q}_1f(x)|
        \leq C(1+|x|)\left(\|\nabla f\|_{\infty}+\|\nabla^{2}f\|_{\mathrm{HS},\infty}\right)\eta^{2/\alpha}.
    \end{align*}
\end{lemma}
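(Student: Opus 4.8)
\textbf{Proof strategy for Lemma~\ref{compare}.}
The plan is to compare $P_\eta f(x)$ and $\tilde Q_1 f(x)$ by writing each as a perturbation of $f(x)$ together with the action of the respective ``generator'' over the single step of length $\eta$, and then estimating the difference term by term. For the SDE side, I would use Dynkin's formula (or It\^o's formula for jump processes applied to $f$, which is legitimate since $f$ has bounded first and second derivatives): $P_\eta f(x) - f(x) = \Ee\int_0^\eta \mathcal{L} f(X_s^x)\,\dif s$, where $\mathcal{L}f = \langle b,\nabla f\rangle - (-\Delta)^{\alpha/2}f$ is the generator of \eqref{SDE}. For the Pareto scheme side, since $\tilde Y_1^x = x + \eta b(x) + \eta^{1/\alpha}\sigma^{-1}\widetilde Z_1$, a second-order Taylor expansion of $f$ around $x$ gives $\tilde Q_1 f(x) - f(x) = \langle \nabla f(x), \eta b(x) + \eta^{1/\alpha}\sigma^{-1}\Ee\widetilde Z_1\rangle + \tfrac12 \Ee\langle \nabla^2 f(\xi), (\cdots)(\cdots)^\top\rangle_{\mathrm{HS}}$ for an intermediate point $\xi$ on the segment. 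Here $\Ee\widetilde Z_1 = 0$ by the rotational symmetry of the Pareto density \eqref{density}, so the drift contribution is exactly $\eta\langle\nabla f(x),b(x)\rangle$.

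The heart of the matter is that the second-order term from the Pareto step should reproduce $\eta(-(-\Delta)^{\alpha/2}f(x))$ up to the stated error. First I would isolate the main quadratic contribution: the cross terms mixing $\eta b(x)$ with $\eta^{1/\alpha}\sigma^{-1}\widetilde Z_1$ and the pure drift term $\eta^2 b(x)b(x)^\top$ are all $O(\eta^{1+1/\alpha})$ or smaller in operator size (using $|b(x)|\le |b(0)|+\theta_2|x|$ from \eqref{linear}), hence bounded by $C(1+|x|)\|\nabla^2 f\|_{\mathrm{HS},\infty}\eta^{2/\alpha}$ because $1+1/\alpha > 2/\alpha$ for $\alpha>1$. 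The genuinely delicate term is $\tfrac{\eta^{2/\alpha}}{2\sigma^2}\Ee\langle\nabla^2 f(\xi),\widetilde Z_1\widetilde Z_1^\top\rangle_{\mathrm{HS}}$; however $\widetilde Z_1$ has no second moment when $\alpha<2$, so this expansion to second order with a pointwise remainder is not directly available. The correct route is to not Taylor-expand past first order but instead write
\begin{align*}
    \tilde Q_1 f(x) - f(x) - \eta\langle \nabla f(x),b(x)\rangle
    = \Ee\bigl[f(x+\eta b(x)+\tfrac{\eta^{1/\alpha}}{\sigma}\widetilde Z_1) - f(x+\eta b(x)) - \tfrac{\eta^{1/\alpha}}{\sigma}\langle\nabla f(x),\widetilde Z_1\rangle\bigr]
\end{align*}
(the last inner term integrates to zero), and then recognize, via the change of variables $y = \eta^{1/\alpha}\sigma^{-1}z$ and the scaling $p(z)\,\dif z = \eta^{1/\alpha}\sigma^{-1}\cdot \alpha\sigma_{d-1}^{-1}|y|^{-\alpha-d}\II_{\{|y|>\eta^{1/\alpha}/\sigma\}}\,\dif y / (\text{normalisation})$, that this is exactly $\eta$ times a \emph{truncated} fractional-Laplacian-type integral
\begin{align*}
    C_{d,\alpha}\int_{|y|>\eta^{1/\alpha}/\sigma}\bigl(f(x+\eta b(x)+y)-f(x+\eta b(x))\bigr)\frac{\dif y}{|y|^{\alpha+d}},
\end{align*}
after also inserting and removing the first-order correction term $\langle\nabla f(x+\eta b(x)),y\rangle$ over the truncation region (which vanishes by symmetry of $y\mapsto -y$ on $\{|y|>\eta^{1/\alpha}/\sigma\}$).

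Once the Pareto step is rewritten in this integral form, the comparison with $\eta(-(-\Delta)^{\alpha/2}f(x))$ splits into three errors: (i) replacing the base point $x+\eta b(x)$ by $x$ inside the fractional Laplacian, controlled by Lemma~\ref{laplace} with $|{(x+\eta b(x))}-x| = \eta|b(x)|\le C(1+|x|)\eta$, giving a bound $C(1+|x|)\|\nabla^2 f\|_{\mathrm{HS},\infty}\eta^{2-\alpha}$ on that difference, which after the overall factor $\eta$ is $C(1+|x|)\eta^{3-\alpha}$ and since $3-\alpha > 2/\alpha$ for $\alpha\in(1,2)$ this is absorbed; (ii) the difference between the full principal-value fractional Laplacian and its truncation to $\{|y|>\eta^{1/\alpha}/\sigma\}$, which is the ``inner'' part $\int_{|y|\le \eta^{1/\alpha}/\sigma}$, bounded using the second-order Taylor estimate $|f(x+y)-f(x)-\langle\nabla f(x),y\rangle|\le \tfrac12\|\nabla^2 f\|_{\mathrm{HS},\infty}|y|^2$ and $\int_{|y|\le \delta}|y|^{2-\alpha-d}\dif y = \tfrac{\sigma_{d-1}}{2-\alpha}\delta^{2-\alpha}$ with $\delta = \eta^{1/\alpha}/\sigma$, yielding $C\|\nabla^2 f\|_{\mathrm{HS},\infty}\eta^{(2-\alpha)/\alpha} = C\|\nabla^2 f\|_{\mathrm{HS},\infty}\eta^{2/\alpha-1}$, and after the overall factor $\eta$ this becomes exactly the order $\eta^{2/\alpha}$; (iii) the remaining error from the cross/quadratic-in-$\eta b(x)$ terms already noted above. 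The main obstacle, and the step deserving the most care, is precisely bookkeeping (ii): making sure the truncation radius coming out of the Pareto density matches $\eta^{1/\alpha}/\sigma$ and that the normalizing constant $\sigma^\alpha = \alpha/(\sigma_{d-1}C_{d,\alpha})$ makes the truncated integral coincide with $-(-\Delta)^{\alpha/2}$ on the region $|y|>\eta^{1/\alpha}/\sigma$ with the right constant $C_{d,\alpha}$; the rest is routine. Summing (i)--(iii) and the first-order drift identity, and collecting the $\|\nabla f\|_\infty$ contribution that appears only through harmless lower-order remainders, gives the claimed bound $C(1+|x|)(\|\nabla f\|_\infty + \|\nabla^2 f\|_{\mathrm{HS},\infty})\eta^{2/\alpha}$.
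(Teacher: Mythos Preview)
Your identification of the Pareto step as $\eta$ times a fractional Laplacian truncated to $\{|y|>\eta^{1/\alpha}/\sigma\}$, with the inner truncation producing the critical $\eta^{2/\alpha}$ term, is exactly the mechanism the paper exploits, and your use of Lemma~\ref{laplace} is also the paper's tool. However, your decomposition loses control of the $x$-growth in one place and this is a genuine gap, not a cosmetic one.

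The issue is the drift bookkeeping. Your displayed identity
\[
\tilde Q_1 f(x)-f(x)-\eta\langle\nabla f(x),b(x)\rangle
=\Ee\bigl[f(x+\eta b(x)+\tfrac{\eta^{1/\alpha}}{\sigma}\widetilde Z_1)-f(x+\eta b(x))-\tfrac{\eta^{1/\alpha}}{\sigma}\langle\nabla f(x),\widetilde Z_1\rangle\bigr]
\]
is not an equality: the two sides differ by $f(x+\eta b(x))-f(x)-\eta\langle\nabla f(x),b(x)\rangle$, which is of size $\tfrac12\|\nabla^2 f\|_{\mathrm{HS},\infty}\,\eta^2|b(x)|^2\le C(1+|x|)^2\|\nabla^2 f\|_{\mathrm{HS},\infty}\,\eta^2$. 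This is the ``pure drift term $\eta^2 b(x)b(x)^\top$'' you list under (iii), but it carries $(1+|x|)^2$, not the $(1+|x|)$ you claim. The same quadratic growth appears on the SDE side if you run Dynkin for the full process and then compare $\int_0^\eta\Ee[\langle b(X_s^x),\nabla f(X_s^x)\rangle]\,\dif s$ with $\eta\langle b(x),\nabla f(x)\rangle$: the cross term $|b(x)|\cdot\|\nabla^2 f\|_{\mathrm{HS},\infty}\,\Ee|X_s^x-x|$ again produces $(1+|x|)^2$. Since the lemma is afterwards applied at the random point $\tilde Y_{i-1}^x$, and the Pareto chain only has a uniform \emph{first} moment (Lemma~\ref{E-Mmoment}), a bound with $(1+|x|)^2$ would not close the argument in the proof of Theorem~\ref{main}.

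The paper sidesteps this by a different split. It writes $X_\eta^x=x+\int_0^\eta b(X_r^x)\,\dif r+Z_\eta$ and couples it with $x+\eta b(x)+Z_\eta$ using the \emph{same} stable increment $Z_\eta$; the first error $\mathsf J_1=\Ee[f(X_\eta^x)-f(x+\eta b(x)+Z_\eta)]$ is then bounded by $\|\nabla f\|_\infty\int_0^\eta\Ee|b(X_r^x)-b(x)|\,\dif r\le C(1+|x|)\|\nabla f\|_\infty\,\eta^{1+1/\alpha}$, which is linear in $|x|$ because the noise cancels and only the Lipschitz constant of $b$ and the bound \eqref{2} enter. The remaining piece $\mathsf J_2$ compares the stable increment with the Pareto increment at the common base point $x+\eta b(x)$; here Dynkin is used only for the pure stable process $Z_r$, so Lemma~\ref{laplace} is applied with $|Z_r|^{2-\alpha}$ and no $x$-dependence appears at all. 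Your error (ii) is exactly the paper's bound on the truncation remainder $\mathsf R$, and your appeal to Lemma~\ref{laplace} matches the paper's treatment of the $\int_0^\eta(-\Delta)^{\alpha/2}f(\cdot+Z_r)\,\dif r$ term; the fix you need is simply to replace the ``expand both sides around $x$'' strategy by the paper's ``freeze the drift first, then compare noises'' split, which keeps the $x$-growth linear.
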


\begin{proof}
From \eqref{SDE} and \eqref{EM}, we see
\begin{align*}
    \Ee [f(X_{\eta}^{x})-f(\tilde{Y}_{1})]
    &=\Ee \left[f\left(x+\int_{0}^{\eta}b(X_{r}^{x})\,\dif r +Z_{\eta}\right)
        - f\left(x+\eta b(x) +\frac{\eta^{1/\alpha}}{\sigma} \widetilde{Z}\right)\right]\\
    &=\mathsf{J}_{1}+\mathsf{J}_{2},
\end{align*}
where
\begin{gather*}
    \mathsf{J}_{1}
    :=\Ee \left[f\left(x+\int_{0}^{\eta} b(X_{r}^{x})\,\dif r+Z_{\eta}\right)
        -f\left(x+\eta b(x)+Z_{\eta}\right)\right],
    \\
    \mathsf{J}_{2}
    := \Ee \left[f\left(x+\eta b(x)+Z_{\eta}\right)-f\left(x+\eta b(x)\right)\right]
        -\Ee \left[f\left(x+\eta b(x)+\frac{\eta^{1/\alpha}}{\sigma} \widetilde{Z}\right)-f\left(x+\eta b(x)\right)\right].
\end{gather*}
We can bound $\mathsf{J}_{1}$ using \eqref{gra} and \eqref{2} with $\beta=1$:
\begin{align*}
    |\mathsf{J}_{1}|
    &\leq\|\nabla f\|_{\infty}\Ee \left|\int_{0}^{\eta}b(X_{r}^{x})\,\dif r-\eta b(x)\right|\\
    &\leq\|\nabla f\|_{\infty}\int_{0}^{\eta}\Ee |b(X_{r}^{x})-b(x)|\,\dif r\\
    &\leq\theta_{2}\|\nabla f\|_{\infty}\int_{0}^{\eta}\Ee |X_{r}^{x}-x|\,\dif r\\
    &\leq C\theta_{2}(1+|x|)\|\nabla f\|_{\infty}\int_{0}^{\eta}r^{1/\alpha}\,\dif r\\
    &\leq C(1+|x|)\| \, \nabla f\|_{\infty}\eta^{1+1/\alpha}.
\end{align*}
For the first term of $\mathsf{J}_{2}$ we use Dynkin's formula (see e.g.\ \cite{CX19}) to get
\begin{align*}
    \Ee \big[f\big(x+\eta b(x)+Z_{\eta}\big)-f\big(x+\eta b(x)\big)\big]
    = \int_{0}^{\eta}\Ee \big[(-\Delta)^{\alpha/2}f\big(x+\eta b(x)+Z_{r}\big)\big]\,\dif r.
\end{align*}
For the second part of $\mathsf{J}_{2}$ we use that $C_{d,\alpha}= \alpha \sigma_{d-1}^{-1} \sigma^{-\alpha}$ and Taylor's formula to see
\begin{align*}
    &\Ee \left[f\left(x+\eta b(x)+\frac{\eta^{1/\alpha}}{\sigma}\widetilde{Z}\right)-f\big(x+\eta b(x)\big)\right]\\
    &\quad = \frac{\eta^{1/\alpha}}{\sigma}\Ee \left[\int_{0}^{1}\left\langle\nabla f\left(x+\eta b(x)+ \frac{\eta^{1/\alpha}}{\sigma}t\widetilde{Z}\right),\: \widetilde{Z}\right\rangle \dif t\right]\\
    &\quad = \frac{\eta^{1/\alpha}}{\sigma} \int_{|z|\geq1} \int_{0}^{1} \alpha \left\langle\nabla f\big(x+\eta b(x)+\frac{\eta^{1/\alpha}} {\sigma}tz\big),\: z\right\rangle \frac{\dif t\,\dif z}{\sigma_{d-1} |z|^{\alpha+d}}\\
    &\quad = \frac{\alpha\eta}{\sigma_{d-1} \sigma^{\alpha}}\int_{|z|\geq\sigma^{-1}\eta^{1/\alpha}}\int_{0}^{1}
    \left\langle\nabla f\big(x+\eta b(x)+tz\big),\:z\right\rangle  \frac{\dif t\,\dif z}{|z|^{\alpha+d}}\\
    &\quad = \eta(-\Delta)^{\alpha/2}f(x+\eta b(x))-\mathsf{R},
\end{align*}
where
\begin{align*}
    \mathsf{R}
    := \eta C_{d,\alpha}\int_{|z|<\sigma^{-1}\eta^{1/\alpha}}\int_{0}^{1}\left\langle\nabla f\big(x+\eta b(x)+tz\big),\: z\right\rangle
    \frac{\dif t\,\dif z}{|z|^{\alpha+d}}.
\end{align*}
Together, the above estimates yield
\begin{align*}
    |\mathsf{J}_{2}|
    \leq |\mathsf{R}| + \left|\int_{0}^{\eta} \Ee \big[(-\Delta)^{\alpha/2} f\big(x+\eta b(x)+Z_{r}\big)\big]\,\dif r-\eta(-\Delta)^{\alpha/2}f(x+\eta b(x))\right|.
\end{align*}
Further, we have
\begin{align*}
    |\mathsf{R}|
    &= \eta C_{d,\alpha} \left|\int_{|z|<\sigma^{-1}\eta^{1/\alpha}}\int_{0}^{1} \left\langle\nabla f\big(x+\eta b(x)+tz\big)-\nabla f\big(x+\eta b(x)\big),\: z\right\rangle \frac{\dif t\,\dif z}{|z|^{\alpha+d}} \right|\\
    &\leq \eta C_{d,\alpha}\int_{|z|<\sigma^{-1}\eta^{1/\alpha}}\int_{0}^{1} \left|\nabla f\big(x+\eta b(x)+tz\big)-\nabla f\big(x+\eta b(x)\big)\right| \frac{\dif t\,\dif z}{|z|^{\alpha+d-1}}\\
    &\leq \frac12\,\eta C_{d,\alpha} \|\nabla^{2}f\|_{\mathrm{HS},\infty} \int_{|z|<\sigma^{-1}\eta^{1/\alpha}} \frac{\dif z}{|z|^{\alpha+d-2}}
    \leq C\|\nabla^{2}f\|_{\mathrm{HS},\infty}\,\eta^{2/\alpha}.
\end{align*}
By Lemma \ref{laplace}, we also have
\begin{align*}
    &\left|\int_{0}^{\eta}\Ee \left[(-\Delta)^{\alpha/2}f\big(x+\eta b(x)+Z_{r}\big)\right] \dif r
    - \eta(-\Delta)^{\alpha/2}f(x+\eta b(x))\right|\\
    &\quad \leq\int_{0}^{\eta}\Ee \left|(-\Delta)^{\alpha/2}f\big(x+\eta b(x)+Z_{r}\big)\big] -(-\Delta)^{\alpha/2}f(x+\eta b(x))\right| \dif r\\
    &\quad \leq C\|\nabla^{2}f\|_{\mathrm{HS},\infty} \int_{0}^{\eta}\Ee  \left[|Z_{r}|^{2-\alpha}\right] \dif r\\
    &\quad =C\|\nabla^{2}f\|_{\mathrm{HS},\infty}\int_{0}^{\eta}\Ee \left[|Z_{1}|^{2-\alpha}\right] r^{2/\alpha-1}\,\dif r\\
    &\quad\leq C \Ee \left[|Z_{1}|^{2-\alpha}\right] \|\nabla^{2}f\|_{\mathrm{HS},\infty}\,\eta^{2/\alpha}.
\end{align*}
The proof follows if we combine all estimates.
\end{proof}

In order to prove Theorem \ref{main2}, we need two more lemmas. The first is just an intermediate step for the proof of the second lemma, which is the key to proving Theorem~\ref{main2}.
\begin{lemma}\label{holder2}
    Assume that $f$ satisfies $\|\nabla f\|_{\infty}<\infty$ and $\|\nabla^{2}f\|_{\mathrm{HS},\infty}<\infty$. For any $\beta\in[1,2]$ and $x,y\in\rd$, we have
    \begin{align*}
        \left|\nabla f(x)-\nabla f(y)\right|\leq\left(2\|\nabla f\|_{\infty}+\|\nabla^{2}f\|_{\mathrm{HS},\infty}\right)|x-y|^{\beta-1}.
    \end{align*}
\end{lemma}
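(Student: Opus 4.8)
The plan is to prove the estimate by interpolating between the two trivial bounds available from the hypotheses: the bound $|\nabla f(x)-\nabla f(y)|\leq 2\|\nabla f\|_{\infty}$ (which is sharp when $|x-y|$ is large) and the bound $|\nabla f(x)-\nabla f(y)|\leq\|\nabla^{2}f\|_{\mathrm{HS},\infty}|x-y|$ (which comes from the mean value theorem applied componentwise and is sharp when $|x-y|$ is small). Here $\beta\in[1,2]$, so $\beta-1\in[0,1]$ and $|x-y|^{\beta-1}$ sits ``between'' the constant $1$ and $|x-y|$.

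The case distinction I would make is according to whether $|x-y|\leq 1$ or $|x-y|>1$. If $|x-y|\leq 1$, then since $\beta-1\leq 1$ we have $|x-y|\leq|x-y|^{\beta-1}$, so the mean value bound gives
\begin{align*}
    |\nabla f(x)-\nabla f(y)|\leq\|\nabla^{2}f\|_{\mathrm{HS},\infty}|x-y|\leq\|\nabla^{2}f\|_{\mathrm{HS},\infty}|x-y|^{\beta-1}.
\end{align*}
If $|x-y|>1$, then since $\beta-1\geq 0$ we have $1\leq|x-y|^{\beta-1}$, so
\begin{align*}
    |\nabla f(x)-\nabla f(y)|\leq 2\|\nabla f\|_{\infty}\leq 2\|\nabla f\|_{\infty}|x-y|^{\beta-1}.
\end{align*}
In both cases the right-hand side is dominated by $\left(2\|\nabla f\|_{\infty}+\|\nabla^{2}f\|_{\mathrm{HS},\infty}\right)|x-y|^{\beta-1}$, which is the claim. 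The only mild technical point worth noting is the mean value inequality for the vector-valued map $\nabla f$: one writes $\nabla f(x)-\nabla f(y)=\int_{0}^{1}\nabla^{2}f(y+s(x-y))(x-y)\,\dif s$ and bounds the integrand by $\|\nabla^{2}f(y+s(x-y))\|_{\mathrm{HS}}|x-y|\leq\|\nabla^{2}f\|_{\mathrm{HS},\infty}|x-y|$, using that the operator norm is dominated by the Hilbert--Schmidt norm.

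There is essentially no obstacle here; the lemma is a soft interpolation statement and the proof is just the two-line case split above. The only thing to be careful about is not to lose the endpoint cases $\beta=1$ (where the inequality reduces to $|\nabla f(x)-\nabla f(y)|\leq 2\|\nabla f\|_{\infty}$, trivially true) and $\beta=2$ (where it reduces to the mean value bound with the harmless extra term $2\|\nabla f\|_{\infty}$), both of which are already covered by the argument. This lemma will then feed into the proof of the key comparison lemma for Theorem~\ref{main2} exactly as Lemma~\ref{laplace} fed into Lemma~\ref{compare}, by providing the fractional-order Hölder modulus of continuity of $\nabla f$ that matches the order-$\beta/\alpha$ moment bounds established earlier.
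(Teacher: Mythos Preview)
Your proof is correct and follows exactly the same approach as the paper: a case split at $|x-y|=1$, using the triangle-inequality bound $2\|\nabla f\|_\infty$ when $|x-y|>1$ and the mean-value bound $\|\nabla^2 f\|_{\mathrm{HS},\infty}|x-y|$ when $|x-y|\leq 1$. Your write-up is in fact slightly more careful than the paper's (which has a harmless typo, dropping the factor $2$ in the large-distance case), and your explicit justification of the mean-value inequality via the integral representation is a welcome addition.
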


\begin{proof}
For $|x-y|>1$ we have
\begin{gather*}
    \left|\nabla f(x)-\nabla f(y)\right|\leq 2\|\nabla f\|_{\infty}\leq\|\nabla f\|_{\infty}|x-y|^{\beta-1},
\intertext{and for $|x-y|\leq 1$ we have}
    \left|\nabla f(x)-\nabla f(y)\right|\leq\|\nabla^{2}f\|_{\mathrm{HS},\infty}|x-y|\leq\|\nabla^{2}f\|_{\mathrm{HS},\infty}|x-y|^{\beta-1}.
\qedhere
\end{gather*}
\end{proof}

\begin{lemma}\label{taylor}
    Let $(X_{t})_{t\geq 0}$ and $(Y_{k})_{k\geq 0}$ be defined by \eqref{SDE} and \eqref{EM2}, respectively. There exists a constant $C>0$ such that for all $x\in\rd$, $\eta\in(0,1)$, $\beta\in[1,\alpha)$ and $f:\rd\to \mathds{R}$ satisfying $\|\nabla f\|_{\infty}<\infty$ and $\|\nabla^{2}f\|_{\mathrm{HS},\infty}<\infty$,
    \begin{align*}
        |P_\eta f(x)-Q_1f(x)|
        \leq C (1+|x|^{\beta}) \left(\|\nabla f\|_{\infty}+\|\nabla^{2}f\|_{\mathrm{HS},\infty}\right) \eta^{2+\frac{1}{\alpha}-\frac{1}{\beta}}.
    \end{align*}
\end{lemma}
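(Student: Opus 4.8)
The plan is to split the one-step error $P_\eta f(x)-Q_1 f(x)$ in the same spirit as in the proof of Lemma \ref{compare}, writing
\begin{align*}
    \Ee[f(X_\eta^x)-f(Y_1)]
    = \underbrace{\Ee\Bigl[f\Bigl(x+\textstyle\int_0^\eta b(X_r^x)\,\dif r+Z_\eta\Bigr)-f\bigl(x+\eta b(x)+Z_\eta\bigr)\Bigr]}_{=:\mathsf{I}_1}
    + \underbrace{\Ee\bigl[f\bigl(x+\eta b(x)+Z_\eta\bigr)-f(Y_1)\bigr]}_{=:\mathsf{I}_2}.
\end{align*}
Here $Y_1 = x+\eta b(x)+Z_\eta$, so in fact $\mathsf{I}_2=0$ and the whole error is $\mathsf{I}_1$; the point of Lemma \ref{taylor} (as opposed to Lemma \ref{compare}) is that there is no noise-replacement term, only the drift-freezing term. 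So I would concentrate on estimating $\mathsf{I}_1 = \Ee\bigl[f(Y_1 + A_\eta) - f(Y_1)\bigr]$ where $A_\eta := \int_0^\eta (b(X_r^x)-b(x))\,\dif r$ is the drift defect. First I would apply the fundamental theorem of calculus along the segment from $Y_1$ to $Y_1+A_\eta$ and insert $\nabla f(x+\eta b(x))$ as a reference point:
\begin{align*}
    \mathsf{I}_1
    = \Ee\Bigl[\int_0^1 \bigl\langle \nabla f\bigl(x+\eta b(x)+Z_\eta + t A_\eta\bigr) - \nabla f\bigl(x+\eta b(x)\bigr),\ A_\eta\bigr\rangle\,\dif t\Bigr]
    + \Ee\bigl[\bigl\langle \nabla f(x+\eta b(x)),\ A_\eta\bigr\rangle\bigr].
\end{align*}
The second (main-order) term is handled by a further Taylor expansion of $b$ inside $A_\eta$: since $A_\eta = \int_0^\eta (b(X_r^x)-b(x))\,\dif r$ and $b(X_r^x)-b(x) = \nabla_{X_r^x - x}b(x) + O(\theta_3|X_r^x-x|^2)$, one writes $\Ee[\langle\nabla f(x+\eta b(x)), A_\eta\rangle]$ as $\langle \nabla f(x+\eta b(x)), \int_0^\eta \Ee[\nabla_{X_r^x-x}b(x)]\,\dif r\rangle$ plus a remainder. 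For the leading piece I would use that $\Ee[X_r^x - x] = \int_0^r \Ee[b(X_s^x)]\,\dif s$ (the compensated stable integral has mean zero), so $\Ee[X_r^x-x] = r\,b(x) + O(\text{lower order})$, giving a term of size $\eta^2$; using $\|\nabla f\|_\infty<\infty$ this contributes $C(1+|x|)\eta^2$, which is dominated by $\eta^{2+1/\alpha-1/\beta}$ since $1/\alpha<1/\beta$ forces $2+1/\alpha-1/\beta<2$. The quadratic-in-$b$ remainder is bounded using $\theta_3$, $\|\nabla f\|_\infty$ and $\Ee|X_r^x-x|^2$-type estimates (via \eqref{2} with the appropriate exponent), again of order at least $\eta^{1+2/\alpha}\le \eta^{2+1/\alpha-1/\beta}$ for $\alpha>1$.

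For the first term in the display for $\mathsf{I}_1$ — the genuinely delicate one — I would use the Hölder-continuity of $\nabla f$ from Lemma \ref{holder2}: for the chosen $\beta\in[1,\alpha)\subset[1,2)$,
\begin{align*}
    \bigl|\nabla f(x+\eta b(x)+Z_\eta+tA_\eta) - \nabla f(x+\eta b(x))\bigr|
    \leq \bigl(2\|\nabla f\|_\infty + \|\nabla^2 f\|_{\mathrm{HS},\infty}\bigr)\,|Z_\eta + tA_\eta|^{\beta-1},
\end{align*}
and then Hölder's inequality on the expectation together with $|A_\eta|\le \int_0^\eta|b(X_r^x)-b(x)|\,\dif r$. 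This yields a bound of the form $C(\|\nabla f\|_\infty+\|\nabla^2f\|_{\mathrm{HS},\infty})\,\bigl(\Ee|Z_\eta+tA_\eta|^{(\beta-1)p}\bigr)^{1/p}\bigl(\Ee|A_\eta|^q\bigr)^{1/q}$ for conjugate $p,q$. I would choose the exponents so that the $Z_\eta$ factor contributes $\eta^{(\beta-1)/\alpha}$ (using self-similarity $Z_\eta \stackrel{d}{=}\eta^{1/\alpha}Z_1$ and finiteness of moments of order $<\alpha$, noting $(\beta-1)p<\alpha$ is arrangeable), while $\Ee|A_\eta|^q \le C(1+|x|^q)\eta^{q(1+1/\alpha)}$ by \eqref{3}-type reasoning (Hölder in $r$ plus \eqref{2}). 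Optimizing/collecting the powers of $\eta$, the $Z_\eta$-contribution $\eta^{(\beta-1)/\alpha}$ times $\eta^{1+1/\alpha}$ from $A_\eta$ gives $\eta^{1+1/\alpha + (\beta-1)/\alpha}= \eta^{1+\beta/\alpha}$ in the worst splitting, and a more careful bookkeeping (keeping the first-order term separate, as above) sharpens the drift-defect contribution to the claimed $\eta^{2+1/\alpha-1/\beta}$. The moment factor $(1+|x|^\beta)$ comes out of \eqref{2} and the linear growth \eqref{linear} of $b$.

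The main obstacle I anticipate is the bookkeeping of exponents: one must not simply bound $\nabla f(\cdot)-\nabla f(\cdot)$ by its Hölder modulus everywhere, because that loses the extra factor $\eta$ coming from the fact that $A_\eta$ is itself $O(\eta\cdot\eta^{1/\alpha})$ rather than $O(\eta^{1/\alpha})$; the gain of the exponent $2+1/\alpha-1/\beta$ (which exceeds $1+1/\alpha$) relies on treating the "linearized" term $\langle\nabla f(x+\eta b(x)), A_\eta\rangle$ separately with $\|\nabla f\|_\infty$ (no loss of $\eta$-powers there) and reserving the Hölder estimate of Lemma \ref{holder2} only for the genuinely second-order difference $\nabla f(\cdots + Z_\eta + tA_\eta)-\nabla f(\cdots)$, where the small factor $|A_\eta|$ already supplies the needed extra powers. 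Matching this against the target exponent, and checking that every intermediate moment order stays strictly below $\alpha$ so that $\Ee|Z_1|^{\cdot}<\infty$, is the part that requires care; the constant $C$ will depend on $\beta$ precisely through these exponent choices, consistent with the statement of Theorem \ref{main2}.
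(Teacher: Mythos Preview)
Your decomposition is essentially equivalent to the paper's (your ``delicate first term'' is the paper's $\mathsf{I}+\mathsf{III}$, your ``main-order'' term is the paper's $\mathsf{II}$), but there are two genuine gaps.

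\textbf{The delicate first term.} Your plan---apply Lemma~\ref{holder2} to get $|\nabla f(\cdots+Z_\eta+tA_\eta)-\nabla f(\cdots)|\le C|Z_\eta+tA_\eta|^{\beta-1}$, then H\"older and self-similarity $Z_\eta\stackrel d=\eta^{1/\alpha}Z_1$---only yields the factor $\eta^{(\beta-1)/\alpha}$ from the noise part, hence a one-step error of order $\eta^{(\beta-1)/\alpha}\cdot\eta^{1+1/\alpha}=\eta^{1+\beta/\alpha}$. But for every $\beta\in(1,\alpha)$ one has $1+\beta/\alpha<2+1/\alpha-1/\beta$ strictly (since $(\beta-1)/\alpha<(\beta-1)/\beta$), so your bound is too weak; it would produce a final rate $\eta^{\beta/\alpha}$ in Theorem~\ref{main2} instead of $\eta^{1+1/\alpha-1/\beta}$. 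The missing idea is \emph{not} to use self-similarity here but to truncate: split on $\{|Z_\eta|\le1\}$ and $\{|Z_\eta|>1\}$. On the first set use the Lipschitz bound $|\nabla f(\cdots+Z_\eta)-\nabla f(\cdots)|\le\|\nabla^2f\|_{\mathrm{HS},\infty}|Z_\eta|$ and the \emph{truncated} moment estimate $\Ee[|Z_\eta|^{\beta/(\beta-1)}\II_{(0,1]}(|Z_\eta|)]\le C\eta$ (valid because $\beta/(\beta-1)>\alpha$, via the density bound $q_\alpha(\eta,x)\le C\eta(\eta^{1/\alpha}+|x|)^{-(\alpha+d)}$); on the second set use $2\|\nabla f\|_\infty$ and the tail $\mathds{P}(|Z_\eta|>1)\le C\eta$. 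In both cases H\"older with exponents $\beta/(\beta-1),\beta$ gives the factor $\eta^{(\beta-1)/\beta}=\eta^{1-1/\beta}$ rather than $\eta^{(\beta-1)/\alpha}$, and $\eta^{1-1/\beta}\cdot\eta^{1+1/\alpha}=\eta^{2+1/\alpha-1/\beta}$ as required. Your ``more careful bookkeeping'' remark does not cover this; the gain really comes from the fact that truncated high-order moments of $Z_\eta$ scale like $\eta$, not like $\eta^{\cdot/\alpha}$.

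\textbf{The main-order term.} Your Taylor expansion of $b$ produces a remainder controlled by $\theta_3\,\Ee|X_r^x-x|^2$, but $Z$ has no second moment, so this expectation is infinite and \eqref{2} cannot help. The paper avoids this by applying It\^o's formula directly to $b(X_s^x)$, giving $\Ee[b(X_s^x)-b(x)]=\int_0^s\Ee[\langle\nabla b(X_r^x),b(X_r^x)\rangle+(-\Delta)^{\alpha/2}b(X_r^x)]\,\dif r$, which is $O(s)$ using \eqref{linear}, Lemma~\ref{laplace} and \eqref{moment}; integrating in $s$ then gives $|\mathsf{II}|\le C(1+|x|)\|\nabla f\|_\infty\eta^2$.
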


\begin{proof}
We use a Taylor expansion to get
\begin{align*}
    &\Ee f(X_{\eta}^{x})-\Ee f(Y_{1}^{x})\\
    &=\Ee\left\langle\nabla f(Y_{1}^{x}),X_{\eta}^{x}-Y_{1}^{x}\right\rangle+\Ee\int_{0}^{1}\left\langle\nabla f\left(Y_{1}^{x}+r(X_{\eta}^{x}-Y_{1}^{x})\right)-\nabla f(Y_{1}^{x}),X_{\eta}^{x}-Y_{1}^{x}\right\rangle\dif r\\
    &=\Ee\left\langle\nabla f\left(x+\eta b(x)+Z_{\eta}\right)-\nabla f\left(x+\eta b(x)\right),X_{\eta}^{x}-Y_{1}^{x}\right\rangle+\Ee\left\langle\nabla f\left(x+\eta b(x)\right),X_{\eta}^{x}-Y_{1}^{x}\right\rangle\\
    &\qquad\mbox{}+\Ee\int_{0}^{1}\left\langle\nabla f\left(Y_{1}^{x}+r(X_{\eta}^{x}-Y_{1}^{x})\right)-\nabla f(Y_{1}^{x}),X_{\eta}^{x}-Y_{1}^{x}\right\rangle\dif r\\
    &=:\mathsf{I}+\mathsf{II}+\mathsf{III}.
\end{align*}

For the first term $\mathsf{I}$ we have
\begin{align*}
    \mathsf{I}
    &=\Ee\left\langle\nabla f\left(x+\eta b(x)+Z_{\eta}\right)-\nabla f\left(x+\eta b(x)\right),X_{\eta}^{x} - Y_{1}^{x}\right\rangle\left(\II_{(0,1]}(|Z_{\eta}|)+\II_{(1,\infty)}(|Z_{\eta}|)\right)\\
    &=:\mathsf{I}_{1}+\mathsf{I}_{2}.
\end{align*}
We need the following estimates for the truncated moment of order $\lambda > \alpha$ and the tail of the $\alpha$-stable random variable $Z_\eta$ and $\eta\leq 1$:
\begin{gather*}
    \mathds{P}\left(|Z_\eta| > 1\right) \leq c\eta
    \quad\text{and}\quad
    \Ee\left[|Z_\eta|^\lambda \II_{(0,1]}(Z_\eta)\right] \leq C\eta.
\end{gather*}
Both estimates follow from a straightforward calculation using the standard estimate $q_{\alpha}(\eta,x)\leq C\eta / (\eta^{1/\alpha}+|x|)^{\alpha+d}$ for the density of $Z_\eta$, see e.g.\ \cite[Theorem 2.1]{BG60}. Since $\frac{\beta - 1}{\beta} > \alpha$, we can use the H\"{o}lder inequality and \eqref{3} to get
\begin{align*}
    |\mathsf{I}_{1}|
    &\leq\Ee\left[\left|\nabla f\left(x+\eta b(x)+Z_{\eta}\right)-\nabla f\left(x+\eta b(x)\right)\right| \II_{(0,1]}(|Z_{\eta}|)\left|X_{\eta}^{x}-Y_{1}^{x}\right|\right]\\
    &\leq\|\nabla^{2}f\|_{\mathrm{HS},\infty}\Ee\left[|Z_{\eta}|\II_{(0,1]}(|Z_{\eta}|)\left|X_{\eta}^{x}-Y_{1}^{x}\right|\right]\\
    &\leq\|\nabla^{2}f\|_{\mathrm{HS},\infty} \left(\Ee\left[|Z_{\eta}|^{\frac{\beta}{\beta-1}}\II_{(0,1]}(|Z_{\eta}|)\right]\right)^{\frac{\beta-1}{\beta}}
    \left(\Ee \left|X_{\eta}^{x}-Y_{1}^{x}\right|^{\beta}\right)^{\frac{1}{\beta}}\\
    &\leq C (1+|x|) \|\nabla^{2}f\|_{\mathrm{HS},\infty}
    \eta^{\frac{\beta-1}{\beta}}\, \eta^{1+\frac{1}{\alpha}}\\
    &=C (1+|x|)\|\nabla^{2}f\|_{\mathrm{HS},\infty}\eta^{2+\frac{1}{\alpha}-\frac{1}{\beta}},
\end{align*}
whereas by the H\"{o}lder inequality
\begin{align*}
    |\mathsf{I}_{2}|
    &\leq\Ee\left[\left|\nabla f\left(x+\eta b(x)+Z_{\eta}\right)-\nabla
    f\left(x+\eta b(x)\right)\right|\II_{(1,\infty)}(|Z_{\eta}|) \left|X_{\eta}^{x}-Y_{1}^{x}\right| \right]\\
    &\leq2\|\nabla f\|_{\infty} \Ee\left[\II_{(1,\infty)}(|Z_{\eta}|)\left|X_{\eta}^{x}-Y_{1}^{x}\right|\right]\\
    &\leq2\|\nabla f\|_{\infty} \left(\Ee\II_{(1,\infty)}(|Z_{\eta}|)\right)^{\frac{\beta-1}{\beta}} \left(\Ee\left|X_{\eta}^{x}-Y_{1}^{x}\right|^{\beta}\right)^{\frac{1}{\beta}}\\
    &\leq C (1+|x|) \|\nabla f\|_{\infty}  \eta^{\frac{\beta-1}{\beta}}\eta^{1+\frac{1}{\alpha}}\\
    &=C (1+|x|) \|\nabla f\|_{\infty}\eta^{2+\frac{1}{\alpha}-\frac{1}{\beta}}.
\end{align*}
Hence, we have
\begin{align*}
    |\mathsf{I}|
    \leq C (1+|x|)\left(\|\nabla f\|_{\infty}+\|\nabla^{2}f\|_{\mathrm{HS},\infty}\right)\eta^{2+\frac{1}{\alpha}-\frac{1}{\beta}}.
\end{align*}

For $\mathsf{II}$ we use It\^{o}'s formula and the definitions \eqref{SDE}, \eqref{EM2} of $X_\eta^x$ and $Y_1^x$ to see
\begin{align*}
    \mathsf{II}
    &=\Ee\left\langle\nabla f\left(x+\eta b(x)\right),\int_{0}^{\eta}\left[b(X_{s}^{x})-b(x)\right] \dif s\right\rangle\\
    &=\left\langle\nabla f\left(x+\eta b(x)\right),\int_{0}^{\eta}\Ee\left[b(X_{s}^{x})-b(x)\right] \dif s\right\rangle\\
    &=\left\langle\nabla f\left(x+\eta b(x)\right),\int_{0}^{\eta}\int_{0}^{s}\Ee\left[
    \left\langle\nabla b(X_{r}^{x}),b(X_{r}^{x})\right\rangle
    + (- \Delta)^{\frac{\alpha}{2}} b(X_{r}^{x})\right] \dif r\,\dif s\right\rangle\\
    &\leq C\|\nabla f\|_{\infty} (1+|x|) \eta^{2}.
\end{align*}
In the last inequality we use the estimate \eqref{linear} (for $b(x_r^x)$) and Lemma~\ref{laplace} (for $(-\Delta)^{\alpha/2}b(X_r^x)$), combined with the moment estimate \eqref{2}.

Finally, $\mathsf{III}$ is estimated by Lemma \ref{holder2} and \eqref{3},
\begin{align*}
    \mathsf{III}
    &\leq C\left(\|\nabla f\|_{\infty}+\|\nabla^{2}f\|_{\mathrm{HS},\infty}\right) \Ee\left|X_{\eta}^{x}-Y_{1}^{x}\right|^{\beta}\\
    &\leq C (1+|x|^{\beta})  \left(\|\nabla f\|_{\infty}+\|\nabla^{2}f\|_{\mathrm{HS},\infty}\right)\eta^{\beta+\frac{\beta}{\alpha}}\\
    &\leq C(1+|x|^{\beta})\left(\|\nabla f\|_{\infty}+\|\nabla^{2}f\|_{\mathrm{HS},\infty}\right)\eta^{2}.
\end{align*}
This finishes the proof.
\end{proof}

\subsection{Proof of Theorems \ref{main} and \ref{main2}}

\begin{proof}[Proof of Theorem \ref{main}.]  Thanks to the discrete version of the classical Duhamel principle, it is easy to check that for $h\in\mathrm{Lip}(1)$
\begin{equation} \label{e:P-Q1}
    P_{N\eta}h(x)-\tilde{Q}_Nh(x)
    = \sum_{i=1}^{N}\tilde{Q}_{i-1}\big(P_{\eta}-\tilde{Q}_{1}\big)P_{(N-i)\eta}h(x).
\end{equation}
Then we have
\begin{equation}\label{definition2}
\begin{aligned}
    W_{1}\left(\mathrm{law}(X_{\eta N}),\mathrm{law}(\tilde{Y}_{N})\right)
        &=\sup_{h\in\mathrm{Lip}(1)}|P_{N\eta}h(x)-\tilde{Q}_Nh(x)|\\
        &\leq\sum_{i=1}^{N-1}\sup_{h\in\mathrm{Lip}(1)}\left|\tilde{Q}_{i-1}\big(P_{\eta}-\tilde{Q}_{1}\big) P_{(N-i)\eta}h(x)\right|\\
        &\quad \mbox{}+\sup_{h\in\mathrm{Lip}(1)}\left|\tilde{Q}_{N-1}(P_\eta-\tilde{Q}_1)h(x)\right|.
\end{aligned}
\end{equation}
First, we bound the last term. By \eqref{2} with $\beta=1$, \eqref{EM}
and \eqref{gra}, for $h\in\mathrm{Lip}(1)$ and $\eta<1$,
\begin{align*}
    \left|(P_\eta-\tilde{Q}_1)h(x)\right|
    &=\left|\Ee h(X_\eta^x) - \Ee h(\tilde{Y}_{1}) \right|\\
    &\leq\left|\Ee h(X_\eta^x) -h(x) \right| + \left| \Ee h(\tilde{Y}_{1})-h(x) \right|\\
    &\leq \Ee |X_\eta^x-x| + \Ee |\tilde{Y}_{1}-x|\\
    &\leq C(1+|x|)\,\eta^{1/\alpha} + \eta\,|b(x)|+\sigma^{-1}\eta^{1/\alpha} \Ee |\widetilde{Z}_1|\\
    &\leq C(1+|x|)\,\eta^{1/\alpha} + \eta^{1/\alpha}(|b(0)|+\theta_2|x|)+\sigma^{-1}\eta^{1/\alpha} \Ee |\widetilde{Z}_1|\\
    &\leq C(1+|x|)\,\eta^{1/\alpha}.
\end{align*}
Together with \eqref{momentd} we get
\begin{equation}\label{last}
    \sup_{h\in\mathrm{Lip}(1)}\left|\tilde{Q}_{N-1}(P_\eta-\tilde{Q}_1)h(x)\right|
    \leq C(1+\Ee |\tilde{Y}_{N-1}^{x}|)
    \eta^{1/\alpha}
    \leq C(1+|x|)\eta^{2/\alpha-1}.
\end{equation}
Next, we bound the first term in \eqref{definition2}; we distinguish between two cases:

\noindent
\textbf{Case 1:} $N\leq\eta^{-1}+1$. By Lemmas \ref{compare} and \ref{regular},
\begin{align*}
    \left|\big(P_{\eta}-\tilde{Q}_{1}\big) P_{(N-i)\eta}h(x)\right|
    &\leq C(1+|x|)\left(\|\nabla P_{(N-i)\eta}h\|_{\infty}+\|\nabla^{2}P_{(N-i)\eta}h\|_{\mathrm{HS},\infty}\right)\eta^{2/\alpha}\\
    &\leq C(1+|x|)[(N-i)\eta]^{-1/\alpha}\eta^{2/\alpha}.
\end{align*}
Combining this with \eqref{momentd}, we get
\begin{equation}\label{jterm}
\begin{aligned}
    \sup_{h\in\mathrm{Lip}(1)}\left|Q_{i-1}\big(P_{\eta}-\tilde{Q}_{1}\big)
    P_{(N-i)\eta}h(x)\right|
    &\leq C
    (1+\Ee |\tilde{Y}_{i-1}^{x}|)[(N-i)\eta]^{-1/\alpha}
    \eta^{2/\alpha}\\
    &\leq C
    (1+|x|)[(N-i)\eta]^{-1/\alpha}
    \eta^{2/\alpha}.
\end{aligned}
\end{equation}
Since $N-1\leq\eta^{-1}$,
\begin{align*}
    \sum_{i=1}^{N-1}[(N-i)\eta]^{-1/\alpha}=\eta^{-1/\alpha}\sum_{i=1}^{N-1}i^{-1/\alpha}&\leq\eta^{-1/\alpha}\int_0^{N-1}r^{-1/\alpha}\,\dif r\\
    &=\frac{\alpha}{\alpha-1}\,\eta^{-1/\alpha}(N-1)^{-1/\alpha+1}\leq \frac{\alpha}{\alpha-1}\,\eta^{-1}.
\end{align*}
This gives the upper bound
\begin{align*}
    \sum_{i=1}^{N-1}\sup_{h\in\mathrm{Lip}(1)}\left|\tilde{Q}_{i-1}\big(P_{\eta}-\tilde{Q}_{1}\big) P_{(N-i)\eta}h(x)\right|
    &\leq C(1+|x|)\eta^{2/\alpha} \sum_{i=1}^{N-1}[(N-i)\eta]^{-1/\alpha}\\
    &\leq C\frac{\alpha}{\alpha-1}\,(1+|x|)\eta^{2/\alpha-1}.
\end{align*}

\noindent
\textbf{Case 2:} $N>\eta^{-1}+1$. By Proposition \ref{W1.1}, for any $x,y\in \mathds{R}^{d}$, there exist constants $C>0$ and $\lambda>0$ such that
\begin{gather*}
    | P_{t}h(x)-P_{t}h(y)|\leq C\eup^{-\lambda t}|x-y|,
    \quad h\in\mathrm{Lip}(1),\; t\geq 0.
\end{gather*}
This implies that
\begin{align*}
    \sup_{h\in\mathrm{Lip}(1)}\left|Q_{i-1}
    \big(P_{\eta}-\tilde{Q}_{1}\big)
    P_{(N-i)\eta}h(x)\right|
    &=\sup_{h\in\mathrm{Lip}(1)}\left|\tilde{Q}_{i-1}
    \big(P_{\eta}-\tilde{Q}_{1}\big)P_1
    P_{(N-i)\eta-1}h(x)\right|\\
    &\leq C\eup^{-\lambda[(N-i)\eta-1]}
    \sup_{g\in\mathrm{Lip}(1)}\left|\tilde{Q}_{i-1}
    \big(P_{\eta}-\tilde{Q}_{1}\big)P_1
    g(x)\right|,
\end{align*}
where $i\leq\lfloor N-\eta^{-1} \rfloor$. By Lemmas \ref{compare} and \ref{regular},
\begin{gather*}
    \left|\big(P_{\eta}-\tilde{Q}_{1}\big) P_{1}g(x)\right|
   \leq C(1+|x|) \left(\|\nabla P_1g\|_{\infty}+\|\nabla^{2}P_1g\|_{\mathrm{HS},\infty}\right) \eta^{2/\alpha}
    \leq C(1+|x|)\,\eta^{2/\alpha}.
\end{gather*}
Combining this with \eqref{momentd}, we get
\begin{align*}
    \sum_{i=1}^{\lfloor N-\eta^{-1} \rfloor} \sup_{h\in\mathrm{Lip}(1)}\left|\tilde{Q}_{i-1}\big(P_{\eta}-\tilde{Q}_{1}\big) P_{(N-i)\eta}h(x)\right|
    &\leq C\eta^{2/\alpha} \sum_{i=1}^{\lfloor N-\eta^{-1} \rfloor} \eup^{-\lambda[(N-i)\eta-1]} \left(1+\Ee |\tilde{Y}_{i-1}^{x}|\right)\\
    &\leq C(1+|x|)\eta^{2/\alpha} \sum_{i=1}^{\lfloor N-\eta^{-1} \rfloor} \eup^{-\lambda[(N-i)\eta-1]}.
\end{align*}
Observe that
\begin{align*}
    \sum_{i=1}^{\lfloor N-\eta^{-1} \rfloor} \eup^{-\lambda[(N-i)\eta-1]}
    &=\sum_{i=\lfloor \eta^{-1} \rfloor}^{N-1} \eup^{-\lambda(i\eta-1)}
    \leq\eup^\lambda\int_{\lfloor \eta^{-1} \rfloor-1}^{N-1}\eup^{-\lambda\eta r}\,\dif r\\
    &\leq\eup^\lambda\eta^{-1}\int_0^\infty\eup^{-\lambda r}\,\dif r
    = \lambda^{-1}\eup^\lambda\eta^{-1}.
\end{align*}
Thus, we get
\begin{gather*}
    \sum_{i=1}^{\lfloor N-\eta^{-1} \rfloor}
    \sup_{h\in\mathrm{Lip}(1)}\left|\tilde{Q}_{i-1}\big(P_{\eta}-\tilde{Q}_{1}\big)
    P_{(N-i)\eta}h(x)\right|
    \leq C\lambda^{-1}\eup^\lambda
    (1+|x|)\eta^{2/\alpha-1}.
\end{gather*}
For $i\geq\lfloor N-\eta^{-1} \rfloor+1$, by almost the same as the calculation in the first case, we find
\begin{align*}
    \sum_{i=\lfloor N-\eta^{-1} \rfloor+1}^{N-1}[(N-i)\eta]^{-1/\alpha}
    \leq \frac{\alpha}{\alpha-1}\,\eta^{-1}.
\end{align*}
Combining this with \eqref{jterm}, we obtain
\begin{align*}
    &\sum_{i=\lfloor N-\eta^{-1} \rfloor+1}^{N-1}\sup_{h\in\mathrm{Lip}(1)}\left|\tilde{Q}_{i-1}\big(P_{\eta}-\tilde{Q}_{1}\big) P_{(N-i)\eta}h(x)\right|\\
    &\qquad\leq C (1+|x|)\,\eta^{2/\alpha} \sum_{i=\lfloor N-\eta^{-1} \rfloor+1}^{N-1}[(N-i)\eta]^{-1/\alpha}
    \leq C\frac{\alpha}{\alpha-1}\,(1+|x|)\,\eta^{2/\alpha-1}.
\end{align*}

We have just shown estimates for $\sum_{i=1}^{\lfloor N-\eta^{-1} \rfloor}\dots$ and $\sum_{i=\lfloor N-\eta^{-1} \rfloor+1}^{N-1}\dots$. Adding them up we arrive at
\begin{align*}
    &\sum_{i=1}^{N-1}\sup_{h\in\mathrm{Lip}(1)}\left|\tilde{Q}_{i-1}\big(P_{\eta}-\tilde{Q}_{1}\big) P_{(N-i)\eta}h(x)\right|\\
    =&\left(\sum_{i=1}^{\lfloor N-\eta^{-1} \rfloor} + \sum_{i=\lfloor N-\eta^{-1} \rfloor+1}^{N-1}\right)
    \sup_{h\in\mathrm{Lip}(1)}\left|\tilde{Q}_{i-1}\big(P_{\eta}-\tilde{Q}_{1}\big) P_{(N-i)\eta}h(x)\right|\leq C(1+|x|)\eta^{2/\alpha-1}.
\end{align*}

Both Case~1 and Case~2 lead to an estimate of the form
\begin{gather*}
    \sum_{i=1}^{N-1} \sup_{h\in\mathrm{Lip}(1)}\left|\tilde{Q}_{i-1}\big(P_{\eta}-\tilde{Q}_{1}\big) P_{(N-i)\eta}h(x)\right|
    \leq C(1+|x|)\,\eta^{2/\alpha-1}.
\end{gather*}
Substituting this and \eqref{last} into \eqref{definition2}, the first assertion of Theorem~\ref{main} follows.

\medskip
It remains to prove Part \eqref{main-ii}. It is easy to see from \eqref{exact1} and \eqref{exact2}, that we have
\begin{gather*}
     \lim_{k\to\infty}W_{1}\big(\mu,\mathrm{law}(X_{\eta k})\big)
     =\lim_{k\to\infty}W_{1}\big(\mathrm{law}(\tilde{Y}_{k}),\tilde{\mu}_{\eta}\big)
     =0.
\end{gather*}
By the triangle inequality and Part \eqref{main-i} with $x=0$,
\begin{align*}
     W_{1}\big(\mu,\tilde{\mu}_{\eta}\big)
     &\leq W_{1}\big(\mu,\mathrm{law}(X_{\eta N})\big)
        + W_{1}\big(\mathrm{law}(X_{\eta N}),\mathrm{law}(\tilde{Y}_{N})\big)
        + W_{1}\big(\mathrm{law}(\tilde{Y}_{N}),\tilde{\mu}_{\eta}\big)\\
     &\leq W_{1}\big(\mu,\mathrm{law}(X_{\eta N})\big)
        + C\,\eta^{2/\alpha-1}
        + W_{1}\big(\mathrm{law}(\tilde{Y}_{N}),\tilde{\mu}_{\eta}\big).
\end{align*}
Letting $N\to\infty$ finishes the proof.
\end{proof}

\begin{proof}[Proof of Theorem \ref{main2}.]
In the above proof, replacing Lemma \ref{compare} and \eqref{exact2} with Lemma \ref{taylor} and \eqref{exact22}, the proof of Theorem \ref{main2} is similar to that of Theorem \ref{main}.
\end{proof}

\section{Malliavin calculus and the proof of Lemma \ref{regular}}\label{A}

\subsection{Jacobi flow associated with the SDE \eqref{SDE}}

The \emph{Jacobian flow} is the derivative of $X_{t}^{x}$ with respect to the initial value $x$; the Jacobian flow
in direction $v\in\rd$ is defined by
\begin{align*}
    \nabla_{v}X_{t}^{x} := \lim_{\epsilon\to 0}\frac{X_{t}^{x+\epsilon v}-X_{t}^{x}}{\epsilon}, \quad t\geq 0.
\end{align*}
This limit exists and satisfies
\begin{align}\label{derivative}
    \frac{\dif}{\dif t}\nabla_{v}X_{t}^{x}=\nabla_{\nabla_{v}X_{t}^{x}}b(X_{t}^{x}), \quad \nabla_{v}X_{0}^{x}=v.
\end{align}
Similarly, for $v_{1},v_{2}\in\rd$, we can define $\nabla_{v_{2}}\nabla_{v_{1}}X_{t}^{x}$, which satisfies
\begin{align}\label{secderivative}
    \frac{\dif}{\dif t}\nabla_{v_{2}}\nabla_{v_{1}}X_{t}^{x}
    = \nabla_{\nabla_{v_{2}}\nabla_{v_{1}}X_{t}^{x}} b(X_{t}^{x})
        +\nabla_{\nabla_{v_{2}}X_{t}^{x}}\nabla_{\nabla_{v_{1}}X_{t}^{x}}b(X_{t}^{x}),
    \quad \nabla_{v_{1}}\nabla_{v_{1}}X_{0}^{x}=0.
\end{align}
Then, we first have the following estimates of $\nabla_{v_{1}}X_{t}^{x}$ and $\nabla_{v_{2}}\nabla_{v_{1}}X_{t}^{x}$.

\begin{lemma}
    For any starting point $x\in\rd$ and all directions $v_{1},v_{2}\in\rd$ the following \textup{(}deterministic\textup{)} estimates hold:
\begin{align}\label{firstbound}
    |\nabla_{v_{1}}X_{t}^{x}|
    &\leq \eup^{\theta_{2}}\left|v_{1}\right|, && t\in (0,1],
\\\label{secondbound}
    |\nabla_{v_{2}}\nabla_{v_{1}}X_{t}^{x}|
    & \leq\frac{\theta_{3}}{2\sqrt{2}\theta_{2}} \,\eup^{4\theta_{2}}\left|v_{1}\right|\left|v_{2}\right|, && t\in (0,1].
\end{align}
\end{lemma}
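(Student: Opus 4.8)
The plan is to estimate the two flows by Gronwall-type arguments applied to the linear ODEs \eqref{derivative} and \eqref{secderivative}, using only the bounds on $\nabla b$ and $\nabla^2 b$ from \eqref{gra}.

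For \eqref{firstbound}, I would set $\varphi(t):=|\nabla_v X_t^x|$ and note that $\frac{\dif}{\dif t}|\nabla_v X_t^x|^2 = 2\langle \nabla_{\nabla_v X_t^x} b(X_t^x), \nabla_v X_t^x\rangle$. The first bound in \eqref{gra} gives $|\nabla_{w}b(x)|\le\theta_2|w|$ for every $w$, hence $\frac{\dif}{\dif t}|\nabla_v X_t^x|^2 \le 2\theta_2 |\nabla_v X_t^x|^2$. Gronwall's lemma then yields $|\nabla_v X_t^x|^2 \le \eup^{2\theta_2 t}|v|^2$, so $|\nabla_v X_t^x|\le \eup^{\theta_2 t}|v|\le \eup^{\theta_2}|v|$ for $t\in(0,1]$. (Alternatively, one differentiates $|\nabla_v X_t^x|$ directly where it is nonzero; the squared version avoids any issue at zeros.)

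For \eqref{secondbound}, write $\psi(t):=|\nabla_{v_2}\nabla_{v_1}X_t^x|$. From \eqref{secderivative}, differentiating $\psi(t)^2$ and using $|\nabla_w b|\le\theta_2|w|$ on the first term and $|\nabla_{w_1}\nabla_{w_2}b|\le\theta_3|w_1||w_2|$ together with \eqref{firstbound} on the second term, I get
\begin{align*}
    \frac{\dif}{\dif t}\psi(t)^2
    &\le 2\theta_2\psi(t)^2 + 2\theta_3 |\nabla_{v_2}X_t^x|\,|\nabla_{v_1}X_t^x|\,\psi(t)\\
    &\le 2\theta_2\psi(t)^2 + 2\theta_3 \eup^{2\theta_2}|v_1||v_2|\,\psi(t).
\end{align*}
Dividing by $\psi(t)$ (again, the squared formulation handles zeros, or one works on intervals where $\psi>0$ and notes $\psi(0)=0$) gives $\frac{\dif}{\dif t}\psi(t)\le \theta_2\psi(t)+\theta_3\eup^{2\theta_2}|v_1||v_2|$, a linear scalar inequality with zero initial value. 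Its solution by the integrating factor $\eup^{-\theta_2 t}$ yields
\begin{align*}
    \psi(t)\le \theta_3\eup^{2\theta_2}|v_1||v_2|\int_0^t \eup^{\theta_2(t-s)}\,\dif s
    = \frac{\theta_3}{\theta_2}\eup^{2\theta_2}\bigl(\eup^{\theta_2 t}-1\bigr)|v_1||v_2|,
\end{align*}
and for $t\in(0,1]$ this is at most $\frac{\theta_3}{\theta_2}\eup^{3\theta_2}|v_1||v_2|$, which is comfortably below the stated $\frac{\theta_3}{2\sqrt2\,\theta_2}\eup^{4\theta_2}|v_1||v_2|$; so the claim follows (the precise numerical constant in the paper presumably comes from a slightly different but equivalent bookkeeping, e.g. controlling $\psi^2$ directly and using $\sqrt{ab}\le(a+b)/2$).

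There is no real obstacle here — both estimates are standard Gronwall arguments on the variational equations. The only point requiring a little care is the non-differentiability of $t\mapsto|\cdot|$ at zeros of the flow; this is handled cleanly by working with the squared norms throughout, which are $\mathcal{C}^1$ in $t$, and only taking square roots at the very end. A secondary bookkeeping point is making sure the factor $\eup^{2\theta_2}$ from the product $|\nabla_{v_1}X_t^x||\nabla_{v_2}X_t^x|$ is carried correctly into the constant; everything else is the routine solution of a first-order linear ODE with vanishing initial condition.
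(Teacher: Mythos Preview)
Your approach is the same as the paper's: differentiate the squared norms of the variational flows and apply Gronwall. For the second-order flow the paper stays with $\psi^2$ throughout, absorbing the cross term via $2AB\le A^2+B^2$ (exactly the alternative bookkeeping you guessed), and this is what produces the constant $\frac{\theta_3}{2\sqrt2\,\theta_2}\eup^{4\theta_2}$.

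One slip in your final comparison: $\frac{\theta_3}{\theta_2}\eup^{3\theta_2}\le\frac{\theta_3}{2\sqrt2\,\theta_2}\eup^{4\theta_2}$ is equivalent to $\eup^{\theta_2}\ge 2\sqrt2$ and therefore fails for $\theta_2<\tfrac32\ln 2$. However, your sharper penultimate bound $\frac{\theta_3}{\theta_2}\eup^{2\theta_2}(\eup^{\theta_2}-1)$ \emph{is} always below the stated constant: with $u=\eup^{\theta_2}\ge 1$ the required inequality $u-1\le u^2/(2\sqrt2)$ holds because the quadratic $u^2/(2\sqrt2)-u+1$ has discriminant $1-\sqrt2<0$. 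So the argument goes through once you skip the unnecessary over-simplification to $\eup^{3\theta_2}$.
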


\begin{proof}
By \eqref{derivative} and \eqref{gra}, we have
\begin{align*}
    \frac{\dif}{\dif t}|\nabla_{v_{1}}X_{t}^{x}|^{2}
    = 2\left\langle\nabla_{v_{1}}X_{t}^{x},
    \nabla_{\nabla_{v_{1}}X_{t}^{x}} b(X_{t}^{x})\right\rangle
    \leq 2\theta_{2}|\nabla_{v_{1}}X_{t}^{x}|^2,
\end{align*}
and Gronwall's inequality yields for  $t\in(0,1]$
\begin{align*}
    |\nabla_{v_1}X_{t}^{x}|^{2}\leq \eup^{2\theta_{2}t}\left|v_{1}\right|^{2}
    \leq\eup^{2\theta_{2}}\left|v_{1}\right|^{2}.
\end{align*}
This proves the first assertion. Writing $\zeta(t):=\nabla_{v_{2}}\nabla_{v_{1}}X_{t}^{x}$, we see from \eqref{secderivative}, \eqref{gra}, \eqref{firstbound}, the Cauchy--Schwarz inequality, and the elementary estimate $2AB \leq A^2+B^2$ that
\begin{align*}
    \frac{\dif}{\dif t}|\zeta(t)|^{2}
    &= 2\left\langle\zeta(t),\nabla_{\zeta(t)} b(X_{t}^{x})\right\rangle + 2\left\langle\zeta(t),\nabla_{\nabla_{v_{2}}X_{t}^{x}}\nabla_{\nabla_{v_{1}}X_{t}^{x}}b(X_{t}^{x})\right\rangle\\
    &\leq 2\theta_{2}|\zeta(t)|^{2} + 2\theta_{3}\eup^{2\theta_{2}}\left|v_{1}\right|\left|v_{2}\right||\zeta(t)|\\
    &\leq 4\theta_{2}|\zeta(t)|^{2} + \frac{\theta_{3}^{2}}{2\theta_{2}}\,\eup^{4\theta_{2}}\left|v_{1}\right|^{2}\left|v_{2}\right|^{2}.
\end{align*}
Since $\zeta(0)=0$, we can use again Gronwall's inequality and get for all $t\in(0,1]$
\begin{equation*}
    |\zeta(t)|^{2}
    \leq \frac{\theta_{3}^{2}}{2\theta_{2}} \,\eup^{4\theta_{2}}\left|v_{1}\right|^{2}\left|v_{2}\right|^{2} \int_{0}^{t}\eup^{4\theta_{2}(t-s)}\,\dif s
    \leq \frac{\theta_{3}^{2}}{8\theta_{2}^{2}}\,\eup^{8\theta_{2}}\left|v_{1}\right|^{2}\left|v_{2}\right|^{2}.
\qedhere
\end{equation*}
\end{proof}

\subsection{Bismut's formula}\label{malliavin} (See also \cite{Nor86}).
Let $u\in L_{loc}^{2}([0,\infty)\times(\Omega,\mathcal{F},\mathds{P});\rd)$, i.e.\ we have $\Ee \int_{0}^{t}|u(s)|^{2}\,\dif s<\infty$ for all $t>0$.  Let $\{W_{t}\}_{t\geq 0}$ be a $d$-dimensional standard Brownian motion and assume that $u$ is adapted to the filtration $(\mathcal{F}_{t})_{t\geq 0}$ with $\mathcal{F}_{t}:=\sigma(W_{s}:0\leq s\leq t);$ i.e.\ $u(t)$ is $\mathcal{F}_{t}$ measurable for $t\geq 0$.  Define
\begin{align}\label{bismutf}
    U
    = \int_{0}^{\bullet}u(s)\,\dif s.
\end{align}
For a $t>0$, let $F_{t}:C([0,t],\rd)\to\real^m$
be an $\mathcal{F}_{t}$ measurable map. If the following limit exists
\begin{align*}
    D_{U}F_{t}(W)
    = \lim_{\epsilon\to 0}\frac{F_{t}(W+\epsilon U)-F_{t}(W)}{\epsilon}
\end{align*}
in $L^{2}((\Omega,\mathcal{F},\mathds{P});\real^m)$, then $F_{t}(W)$ is said to be $m$-dimensional \emph{Malliavin differentiable} and $D_{U}F_{t}(W)$ is called the \emph{Malliavin derivative} of $F_{t}(W)$ in the direction $U$.

Let $\phi\in \mathcal{C}_b^2(\rd,\real)$ and both $F_{t}(W)$ and $G_{t}(W)$  be $d$-dimensional Malliavin differentiable functionals. Then we have the following product and chain rules:
\begin{gather*}
    D_{U}\big(\langle F_{t}(W),G_{t}(W)\rangle\big)
    =\langle D_{U}F_{t}(W),G_{t}(W)\rangle
    +\langle F_{t}(W),D_{U}G_{t}(W) \rangle,
\intertext{and}
    D_{U}\nabla\phi(F_{t}(W))
    =\nabla_{D_{U}F_{t}(W)}\nabla\phi(F_{t}(W)).
\end{gather*}

The following integration by parts formula is often called \emph{Bismut's formula}. For a Malliavin differentiable $F_{t}(W)$ such that $F_{t}(W)$, $D_{U}F_{t}(W)\in L^{2}((\Omega,\mathcal{F},\mathds{P});\real)$,
we have
\begin{align}\label{bismut}
    \Ee [D_{U}F_{t}(W)]
    = \Ee \left[F_{t}(W)\int_{0}^{t}
    \left\langle u(s),\dif W_{s}
    \right\rangle
    \right].
\end{align}

\subsection{Time-change method for the SDE \eqref{SDE}}\label{timec} (See also \cite{Z}).
We will now turn to the SDE driven by a rotationally invariant $\alpha$-stable L\'evy process with $\alpha\in(1,2)$. We can express such drivers as subordinated Brownian motion. More precisely, let $\{S_{t}\}_{t\geq 0}$ be an independent $\frac{\alpha}{2}$-stable subordinator. Then, $Z_{t}:=W_{S_{t}}$ is a rotationally invariant $\alpha$-stable L\'evy process, see e.g.\ \cite{sato}. This means that we can re-write \eqref{SDE} as
\begin{align}\label{SDES}
    \dif X_{t} = b(X_{t})\,\dif t + \dif W_{S_{t}}, \quad X_{0}=x.
\end{align}

Let $\mathds{W}$ be the space of all continuous functions from $[0,\infty)$ to $\rd$ vanishing at $t=0$; we equip $\mathds{W}$ with the topology of locally uniform convergence, and the Wiener measure $\mu_{\mathds{W}}$; therefore, the coordinate process
\begin{align*}
    W_{t}(w)=w_{t}
\end{align*}
are a standard $d$-dimensional Brownian motion. Let $\mathds{S}$ be the space of all increasing, c\`adl\`ag (right continuous with finite left limits) functions from $[0,\infty)$ to $[0,\infty)$ vanishing at $t=0$; we equip $\mathds{S}$ with the Skorohod metric and the probability measure $\mu_{\mathds{S}}$ so that for any $l \in \mathds S$ the coordinate process
\begin{align*}
    S_{t}(l):=l_{t}
\end{align*}
is an $\frac{\alpha}{2}$-stable subordinator. On the product measure space
\begin{align*}
    (\Omega,\mathcal{F},\mathds{P})
    := \left(\mathds{W}\times\mathds{S}, \mathcal{B}(\mathds{W})\otimes\mathcal{B}(\mathds{S}),\mu_{\mathds{W}}\times\mu_{\mathds{S}}\right),
\end{align*}
we define
\begin{align*}
    L_{t}(w,l):=w_{l_{t}}.
\end{align*}
The process $\{L_{t}\}_{t\geq 0}$ is a rotationally invariant $\alpha$-stable L\'evy process on $(\Omega,\mathcal{F},\mathds{P})$.  We will use the following two natural filtrations associated with the L\'{e}vy process $L_{t}$ and the Brownian motion $W_{t}$:
\begin{align*}
    \mathcal{F}_{t}:=\sigma\left\{L_{s}(w,l); s\leq t\right\}
    \quad\text{and}\quad
    \mathcal{F}_{t}^{\mathds{W}}:=\sigma\left\{W_{s}(w); s\leq t\right\}.
\end{align*}
In particular, we can regard the solution $X_{t}^{x}$ of the SDE \eqref{SDES} as an $(\mathcal{F}_{t})$-adapted functional on $\Omega$, and therefore,
\begin{align*}
    \Ee f\left(X_{t}^{x}\right) = \int_{\mathds{S}}\int_{\mathds{W}} f\left(X_{t}^{x}(w_{l})\right) \mu_{\mathds{W}}(\dif w) \, \mu_{\mathds{S}}(\dif l).
\end{align*}
For every fixed $l\in\mathds{S}$, we denote by $X_{t}^{l}$ the solution to the SDE
\begin{align}\label{SDESL}
    \dif X_{t}^{l}
    = b(X_{t}^{l})\,\dif t + \dif W_{l_{t}}, \quad X_{0}^{l}=x.
\end{align}

We will now fix a path $l\in\mathds{S}$, and consider the SDE \eqref{SDESL}. Unless otherwise mentioned, all expectations are taken with respect to the Wiener space $\left(\mathds{W},\mathcal{B}(\mathds{W}),\mu_{\mathds{W}}\right)$. First of all, notice that $t\to W_{l_{t}}$ is a centered Gaussian process with independent increments. In particular, $W_{l_{t}}$ is a c\`adl\`ag $\mathcal{F}_{l_{t}}^{\mathds{W}}$-martingale. Thus, under \textbf{{Assumption A}}, it is well known that for each $x\in\rd$, the SDE \eqref{SDESL} admits a unique c\`adl\`ag $\mathcal{F}_{l_{t}}^{\mathds{W}}$-adapted solution $X_{t}^{x;l}$, see e.g.\ \cite[p.249, Theorem 6]{Pro04}.

The main aim of this section is to establish the following result:
\begin{lemma}\label{implement}
    Under \textbf{Assumption A} one has for all functions $\phi\in \mathcal{C}_{b}^{2}(\rd,\real)$, all directions $v_{1},v_{2}\in\rd$ and $x\in\rd$, $t\in(0,1]$
    \begin{equation}\label{grades}
        |\nabla_{v_1}X_{t}^{x;l}|
        \leq \eup^{\theta_2}\left|v_{1}\right|
    \end{equation}
    and
    \begin{align*}
    &\left|\Ee \left[\nabla_{\nabla_{v_{2}} X_{t}^{x;l}}\nabla_{\nabla_{v_{1}}X_{t}^{x;l}}
    \phi\bigl(X_{t}^{x;l}\bigr)\right]\right|\\
    &\quad\leq \left|\Ee \left[\frac{1}{l_{t}}
    \nabla_{\nabla_{v_{1}}X_{t}^{x;l}}\phi\bigl(X_{t}^{x;l}\bigr) \int_{0}^{t}\left\langle\nabla_{v_{2}}X_{s}^{x;l},
    \dif W_{l_{s}}\right\rangle\right]\right| + \|\nabla\phi\|_{\infty}\,\frac{\theta_{3}}{\sqrt{2\theta_{2}}}\,\eup^{2\theta_{2}}\left|v_{1}\right|\left|v_{2}\right|,
    \end{align*}
    where $\nabla_{v_{i}}X_{t}^{x;l}$ $(i=1,2)$ is determined by the following linear equation:
    \begin{align}\label{derivativel}
        \frac{\dif}{\dif t}\nabla_{v_{i}}X_{t}^{x;l}=\nabla_{\nabla_{v_{i}}X_{t}^{x;l}} b(X_{t}^{x;l}), \quad \nabla_{v_{i}}X_{0}^{x;l}=v_{i}.
    \end{align}
\end{lemma}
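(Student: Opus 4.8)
The estimate \eqref{grades} for the Jacobi flow is immediate: since $l$ is fixed, the equation \eqref{derivativel} for $\nabla_{v_1}X_t^{x;l}$ is exactly the deterministic linear ODE \eqref{derivative}, so the Gronwall argument giving \eqref{firstbound} applies verbatim and yields $|\nabla_{v_1}X_t^{x;l}|\le \eup^{\theta_2}|v_1|$ for $t\in(0,1]$. The substance of the lemma is the second inequality, and the plan is to obtain it by applying Bismut's formula \eqref{bismut} \emph{conditionally on the subordinator path $l$}, on the Wiener space $(\mathds{W},\mathcal{B}(\mathds{W}),\mu_{\mathds{W}})$, with the time-changed Brownian motion $W_{l_t}$ playing the role of $W_t$ in Section~\ref{malliavin}.

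First I would fix $l\in\mathds{S}$ and identify the right Cameron--Martin perturbation. The natural choice is to perturb $W$ in the direction $U_s=\int_0^s u(r)\,\dif r$ with $u(r)=\frac{1}{l_t}\nabla_{v_2}X^{x;l}_{r}\,\II_{[0,l_t]}(r)$ (more precisely, perturbing $W_{l_s}$ along $\frac{s}{l_t}$-scaled increments of $\nabla_{v_2}X^{x;l}$), so that the Malliavin derivative of $X^{x;l}_t$ along $U$ reproduces the Jacobi flow $\nabla_{v_2}X^{x;l}_t$ up to a correction term. Concretely, one computes $D_U X^{x;l}_t$ using the chain rule: it solves a linear equation driven by $\nabla b(X^{x;l}_\cdot)$ with an inhomogeneous term coming from $u$, and comparing with \eqref{derivativel} one finds $D_U X^{x;l}_t = \nabla_{v_2}X^{x;l}_t + (\text{remainder})$, where the remainder is controlled by $\|\nabla b\|$, i.e.\ by $\theta_2$, and by the length $l_t$ of the time change. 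Then I would apply \eqref{bismut} to the functional $F_t(W)=\nabla_{v_1}X^{x;l}_t\phi(X^{x;l}_t)$ — which is $\mathcal{F}^{\mathds{W}}_{l_t}$-measurable and Malliavin differentiable — using the product rule and the chain rule $D_U\nabla\phi(X^{x;l}_t)=\nabla_{D_UX^{x;l}_t}\nabla\phi(X^{x;l}_t)$, together with the analogous linear equation for $D_U(\nabla_{v_1}X^{x;l}_t)$. This turns $\Ee[\nabla_{\nabla_{v_2}X^{x;l}_t}\nabla_{\nabla_{v_1}X^{x;l}_t}\phi(X^{x;l}_t)]$ into the stochastic-integral term $\Ee[\frac{1}{l_t}\nabla_{\nabla_{v_1}X^{x;l}_t}\phi(X^{x;l}_t)\int_0^t\langle\nabla_{v_2}X^{x;l}_s,\dif W_{l_s}\rangle]$ plus error terms.

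The remaining error terms are of two kinds: one coming from $D_U(\nabla_{v_1}X^{x;l}_t)\ne 0$ (the second-order Jacobi flow being perturbed), and one from the discrepancy $D_U X^{x;l}_t-\nabla_{v_2}X^{x;l}_t$ appearing inside $\nabla_{D_UX^{x;l}_t}\nabla\phi$. Each of these is bounded by $\|\nabla\phi\|_\infty$ times a product of Jacobi-flow norms, which by \eqref{grades} and the Gronwall-type estimate behind \eqref{secondbound} are controlled by $\theta_2,\theta_3$ and $\eup^{c\theta_2}$; collecting constants should give the stated bound $\|\nabla\phi\|_\infty\frac{\theta_3}{\sqrt{2\theta_2}}\eup^{2\theta_2}|v_1||v_2|$. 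Finally, since all of this is done for fixed $l$ and the bounds are uniform in $l\in\mathds{S}$, nothing further is needed — the lemma is stated path-by-path in $l$.

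\textbf{Main obstacle.} The delicate point is the bookkeeping in the conditional Malliavin calculus: one must choose $u$ so that $D_UX^{x;l}_t$ matches $\nabla_{v_2}X^{x;l}_t$ as closely as possible, verify that $u$ is adapted to $(\mathcal{F}^{\mathds{W}}_{l_t})$ and square-integrable (so that \eqref{bismut} applies — here the factor $1/l_t$ and the integrability of the inverse subordinator must be handled, or absorbed since $l$ is frozen), and carefully track the two remainder terms so that the Cauchy--Schwarz / Gronwall estimates reproduce the precise constant. The interplay between the time change $t\mapsto l_t$ and the It\^o integral $\int_0^t\langle\cdot,\dif W_{l_s}\rangle$ (a time-changed Brownian martingale) is where one has to be most careful.
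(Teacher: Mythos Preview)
Your strategic outline is right---fix $l$, choose a Cameron--Martin direction so that the Malliavin derivative of $X_t^{x;l}$ reproduces the Jacobi flow, apply Bismut's formula to $\nabla_{\nabla_{v_1}X_t^{x;l}}\phi(X_t^{x;l})$---but there is a genuine gap in the execution. Bismut's formula \eqref{bismut} is stated for functionals of a \emph{standard} Brownian motion, with the perturbation $u$ adapted to the Brownian filtration. The SDE \eqref{SDESL} is driven by $W_{l_t}$ where $l$ is a fixed path of an $\alpha/2$-stable subordinator; such an $l$ is strictly increasing but purely discontinuous, so neither $l$ nor its generalized inverse is absolutely continuous. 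Your choice $u(r)=\frac{1}{l_t}\nabla_{v_2}X_r^{x;l}\II_{[0,l_t]}(r)$ conflates the SDE time scale (on which $X^{x;l}$ and its Jacobi flow live) with the Brownian time scale (on which $u$ must be indexed and adapted); making this precise requires inverting $l$, which cannot be done smoothly. The paper resolves this by first replacing $l$ with a mollified, strictly increasing, absolutely continuous path $l^\epsilon$, so that the inverse $\gamma^\epsilon$ is absolutely continuous and the time-changed process $Y_r^{x;l^\epsilon}:=X_{\gamma_r^\epsilon}^{x;l^\epsilon}$ solves a genuine SDE \eqref{SO} driven by $W_r$; Bismut then applies in the textbook setting, and one passes to the limit $\epsilon\to0$ afterwards. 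That limit step is itself nontrivial and is where the paper invokes \cite[Lemmas~2.2 and~2.5]{Z}; your proposal does not mention it.

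A secondary point: with the paper's choice $u_2(s)=\frac{1}{t-l_0^\epsilon}\nabla_{v_2}Y_s^{x;l^\epsilon}$ one obtains $D_{U_2}Y_t^{x;l^\epsilon}=\nabla_{v_2}Y_t^{x;l^\epsilon}$ \emph{exactly} at the terminal time (this is \eqref{equality} with $s=t$), so there is no ``discrepancy $D_UX_t-\nabla_{v_2}X_t$'' error of the kind you list. The only remainder is $\Ee\bigl[\nabla_{D_{U_2}\nabla_{v_1}Y_t^{x;l^\epsilon}}\phi(Y_t^{x;l^\epsilon})\bigr]$, and it is this single term that yields the constant $\|\nabla\phi\|_\infty\frac{\theta_3}{\sqrt{2\theta_2}}\eup^{2\theta_2}$ via the Gronwall bound \eqref{SLS}. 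Getting the constant right with your two-error decomposition would at best give a worse constant, and you would still need the $l^\epsilon$-regularization to make the calculus rigorous.
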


In order to prove Lemma \ref{implement}, we use a time-change argument to transform the SDE \eqref{SDESL} into an SDE driven by a standard Brownian motion; this allows us to use Bismut's formula \eqref{bismut}. For every $\epsilon\in(0,1)$ we define
\begin{align*}
    l_{t}^{\epsilon}
    := \frac{1}{\epsilon} \int_{t}^{t+\epsilon}l_{s}\,\dif s + \epsilon t
    = \int_{0}^{1}l_{\epsilon s+t}\,\dif s + \epsilon t.
\end{align*}
Since $t\mapsto l_{t}$ is increasing and right continuous, it follows that for each $t\geq 0$,
\begin{align*}
    l_{t}^{\epsilon}\downarrow l_{t} \quad\text{as}\quad \epsilon\downarrow 0.
\end{align*}
Moreover, $t\mapsto l_{t}^{\epsilon}$ is absolutely continuous and strictly increasing. Let $\gamma^{\epsilon}$ be the inverse function of $l^{\epsilon}$, i.e.
\begin{align*}
    l_{\gamma^{\epsilon}_{t}}^{\epsilon} = t, \quad t\geq l_{0}^{\epsilon}
    \quad\text{and}\quad
    \gamma_{l_{t}^{\epsilon}}^{\epsilon} = t, \quad t\geq 0.
\end{align*}
By definition, $\gamma_{t}^{\epsilon}$ is absolutely continuous on $[l_{0}^{\epsilon},\infty)$. Let $X_t^{x;l^\epsilon}$ be the solution to the SDE
\begin{equation}\label{xellepsilon}
    \dif X_{t}^{x;l^\epsilon}
    = b(X_{t}^{x;l^\epsilon})\,\dif t
    + \dif W_{l^\epsilon_{t}-l^\epsilon_{0}},
    \quad X_{0}^{x;l^\epsilon}=x.
\end{equation}
Let us now define
\begin{align*}
    Y_{t}^{x;l^{\epsilon}} := X_{\gamma_{t}^{\varepsilon}}^{x;l^{\epsilon}}, \quad t\geq l_{0}^{\epsilon}.
\end{align*}
Changing variables in \eqref{xellepsilon} we see that for $t\geq l_{0}^{\epsilon}$,
\begin{align}\label{SO}
    Y_{t}^{x;l^{\epsilon}}
    = x + \int_{0}^{\gamma_{t}^{\epsilon}} b\left(X_{s}^{x;l^{\epsilon}}\right)\,\dif s+W_{t-l^\epsilon_{0}}
    = x + \int_{l_{0}^{\epsilon}}^{t} b\left(Y_{s}^{x;l^{\epsilon}}\right) \dot{\gamma}_{s}^{\epsilon}\,\dif s+W_{t-l^\epsilon_{0}}
\end{align}
($\dot\gamma_s^\epsilon$ denotes the derivative in $s$). Hence, for any vector $v\in\rd$, we have
\begin{align}\label{Sfirsr}
    \nabla_{v}Y_{t}^{x;l^{\epsilon}}
    = v + \int_{l_{0}^{\epsilon}}^{t}\nabla_{\nabla_{v}Y_{s}^{x;l^{\epsilon}}} b\left(Y_{s}^{x;l^{\epsilon}}\right) \,\dot{\gamma}_{s}^{\epsilon}\,\dif s,
\end{align}
and the differential form can be written as
\begin{align*}
    \frac{\dif}{\dif t} \nabla_{v}Y_{t}^{x;l^{\epsilon}}
    = \nabla b\left(Y_{t}^{x;l^{\epsilon}}\right) \,\dot{\gamma}_{t}^{\epsilon} {\nabla_{v}Y_{t}^{x;l^{\epsilon}}},
    \quad t\geq l_{0}^{\epsilon},
\end{align*}
which has a solution of the form
\begin{align}\label{gradientSL}
    \nabla_{v}Y_{t}^{x;l^{\epsilon}} = J_{l_{0}^{\epsilon},t}^{x;l^{\epsilon}}v,
\end{align}
involving a matrix exponential
\begin{align}\label{Jst}
    J_{s,t}^{x;l^{\epsilon}}
    = \exp\left[\int_{s}^{t}\nabla b\left(Y_{s}^{x;l^{\epsilon}}\right) \dot{\gamma}_{s}^{\epsilon}\,\dif s\right],
    \quad l_{0}^{\epsilon}\leq s\leq t<\infty.
\end{align}
It is easy to see that $J_{s,t}^{x;l^{\epsilon}}J_{l_{0}^{\epsilon},s}^{x;l^{\epsilon}} = J_{l_{0}^{\epsilon},t}^{x;l^{\epsilon}}$ for all $l_{0}^{\epsilon}\leq s\leq t<\infty$.

Now, we come back to the Malliavin calculus from Section \ref{malliavin}. Fixing $t\geq l_{0}^{\epsilon}$ and $x\in\rd$, the solution $Y_{t}^{x;l^{\epsilon}}$ is a $d$-dimensional functional of Brownian motion $\{W_{s}\}_{l_{0}^{\epsilon}\leq s\leq t}$.

Let $U$ be as in Section \ref{malliavin}. The Malliavin derivative of $Y_{t}^{x;l^{\epsilon}}$ in direction $U$ exists in $L^{2}\left((\mathds{W},\mathcal{B}(\mathds{W}),\mu_{\mathds{W}});\rd\right)$ and is given by
\begin{align*}
    D_{U}Y_{t}^{x;l^{\epsilon}}(W)
    = \lim_{\delta\to 0}\frac{Y_{t}^{x;l^{\epsilon}}(W+\delta U)-Y_{t}^{x;l^{\epsilon}}(W)}{\delta}.
\end{align*}
To simplify notation, we drop the $W$ in $D_{U}Y_{t}^{x;l^{\epsilon}}(W)$ and write $D_{U}Y_{t}^{x;l^{\epsilon}}=D_{U}Y_{t}^{x;l^{\epsilon}}(W)$. By \eqref{SO}, it satisfies the equation
\begin{align*}
    D_{U}Y_{t}^{x;l^{\epsilon}}
    = \int_{l_{0}^{\epsilon}}^{t}\left(\nabla_{D_{U}Y_{s}^{x;l^{\epsilon}}} b\left(Y_{s}^{x;l^{\epsilon}}\right)\,\dot{\gamma}_{s}^{\epsilon}+u(s)\right) \dif s,
\end{align*}
the differential form of the above equation can be written as
\begin{align*}
    \frac{\dif}{\dif t}D_{U}Y_{t}^{x;l^{\epsilon}}
    = \nabla b\left(Y_{t}^{x;l^{\epsilon}}\right)\,\dot{\gamma}_{t}^{\epsilon} {D_{U}Y_{t}^{x;l^{\epsilon}}} + u(t),
    \quad t\geq l_{0}^{\epsilon},
\end{align*}
and this equation has a unique solution which is given via the matrix exponential \eqref{Jst}:
\begin{equation}\label{MgradientSL}
    D_{U}Y_{t}^{x;l^{\epsilon}}
    = \int_{l_{0}^{\epsilon}}^{t} J_{s,t}^{x;l^{\epsilon}}u(s)\,\dif s.
\end{equation}

For a fixed $t>0$, for any $v_1,v_2,x\in\rd$, we define $u_{i},U_{i}:[l_{0}^{\epsilon},t] \to \rd$ by
\begin{gather}\label{mark}
    u_{i}(s) := \frac{1}{t-l_{0}^{\epsilon}}\,\nabla_{v_i}Y_{s}^{x;l^{\epsilon}},
    \quad
    U_{i;s} := \int_0^su_{i}(r)\,\dif r
\end{gather}
for $l_{0}^{\epsilon}\leq s\leq t$ and $i=1,2$. Then
\begin{align}\label{equality}
    D_{U_{i}}Y_{s}^{x;l^{\epsilon}}
    = \frac{s-l_{0}^{\epsilon}}{t-l_{0}^{\epsilon}}\,\nabla_{v_i}Y_{s}^{x;l^{\epsilon}},
    \quad l_{0}^{\epsilon}\leq s\leq t.
\end{align}
In addition, \eqref{Sfirsr} implies that
for $s\in[l_{0}^{\epsilon},t]$
\begin{align}\label{SMalliavin}
    D_{U_{2}}\nabla_{v_{1}}Y_{s}^{x;l^{\epsilon}}
    =
    \int_{l_{0}^{\epsilon}}^{s}
    \left(\nabla_{D_{U_{2}} Y_{r}^{x;l^{\epsilon}}}\nabla_{\nabla_{v_{1}}Y_{r}^{x;l^{\epsilon}}} b\left(Y_{r}^{x;l^{\epsilon}}\right)+\nabla_{D_{U_{2}}\nabla_{v_{1}}Y_{r}^{x;l^{\epsilon}}} b\left(Y_{r}^{x;l^{\epsilon}}\right) \right)
    \dot{\gamma}_{r}^{\epsilon}\,\dif r.
\end{align}

The following lemma contains the upper bounds on the derivatives.
\begin{lemma}\label{suppleSLS}
    Let $v_1,v_2,x\in\rd$ and $t\in(0,1]$. Then,
    \begin{gather}\label{SLgra}
        |\nabla_{v_{i}}Y_{s}^{x;l^{\epsilon}}|
        \leq \eup^{\theta_{2}\gamma_{s}^{\epsilon}}\,|v_{i}|
    \intertext{and}\label{SLS}
        |D_{U_{2}}\nabla_{v_{1}}Y_{s}^{x;l^{\epsilon}}|
        \leq \frac{\theta_{3}}{\sqrt{2\theta_{2}}}\,\eup^{2\theta_{2}\gamma_{s}^{\epsilon}} \sqrt{\gamma_{s}^{\epsilon}}\,\left|v_{1}\right|\left|v_{2}\right|.
\end{gather}
    for $l_{0}^{\epsilon}\leq s\leq t$ and $i=1,2$.
\end{lemma}

\begin{proof}
Recall that $\theta_{1}>0$ and $\dot{\gamma}_{s}^{\epsilon}\geq 0$. By \eqref{Sfirsr} and \eqref{gra}, we have for any $l_{0}^{\epsilon}\leq s\leq t$
\begin{gather*}
    \frac{\dif}{\dif s}|\nabla_{v_{i}}Y_{s}^{x;l^{\epsilon}}|^{2}
    = 2\dot{\gamma}_{s}^{\epsilon} \left\langle\nabla_{v_{i}}Y_{s}^{x;l^{\epsilon}},\:\nabla_{\nabla_{v_{i}}Y_{s}^{x;l^{\epsilon}}} b\left(Y_{s}^{x;l^{\epsilon}}\right)\right\rangle
    \leq 2\theta_{2}\dot{\gamma}_{s}^{\epsilon}|\nabla_{v_{i}}Y_{s}^{x;l^{\epsilon}}|^{2},
\intertext{and this implies, because of Gronwall's lemma,}
    |\nabla_{v_{i}}Y_{s}^{x;l^{\epsilon}}|^{2}
    \leq \exp\left[2\theta_{2} \int_{l_{0}^{\epsilon}}^{s}\dot{\gamma}_{r}^{\epsilon}\,\dif r\right]|v_{i}|^{2}
    = \eup^{2\theta_{2}(\gamma_{s}^{\epsilon} - \gamma_{l_{0}^{\epsilon}}^{\epsilon})}|v_{i}|^{2}
    = \eup^{2\theta_{2}\gamma_{s}^{\epsilon}}|v_{i}|^{2}.
\end{gather*}

Using \eqref{SMalliavin} and \eqref{gra} we find for any $l_{0}^{\epsilon}\leq s\leq t$
\begin{align*}
    \frac{\dif}{\dif s}|D_{U_{2}} \nabla_{v_{1}}Y_{s}^{x;l^{\epsilon}}|^{2}
    &= 2\dot{\gamma}_{s}^{\epsilon} \left\langle D_{U_{2}}\nabla_{v_{1}}Y_{s}^{x;l^{\epsilon}},\: \nabla_{D_{U_{2}}\nabla_{v_{1}}Y_{s}^{x;l^{\epsilon}}} b\left(Y_{s}^{x;l^{\epsilon}}\right) \right\rangle\\
    &\quad\mbox{} + 2\dot{\gamma}_{s}^{\epsilon} \left\langle D_{U_{2}}\nabla_{v_{1}}Y_{s}^{x;l^{\epsilon}},\: \nabla_{D_{U_{2}}Y_{s}^{x;l^{\epsilon}}}\nabla_{\nabla_{v_{1}}Y_{s}^{x;l^{\epsilon}}}b\left(Y_{s}^{x;l^{\epsilon}}\right)\right\rangle\\
    &\leq 2\theta_{2}\dot{\gamma}_{s}^{\epsilon} \left|D_{U_{2}} \nabla_{v_{1}}Y_{s}^{x;l^{\epsilon}}\right|^{2}
        + 2\theta_{3}\dot{\gamma}_{s}^{\epsilon} \left|D_{U_{2}} \nabla_{v_{1}}Y_{s}^{x;l^{\epsilon}}\right| \left|D_{U_{2}} Y_{s}^{x;l^{\epsilon}}\right| \left|\nabla_{v_{1}}Y_{s}^{x;l^{\epsilon}}\right|.
\end{align*}
Now \eqref{equality} and \eqref{SLgra} imply
\begin{align*}
    \frac{\dif}{\dif s}\left|D_{U_{2}} \nabla_{v_{1}}Y_{s}^{x;l^{\epsilon}}\right|^{2}
    &\leq 2\theta_{2}\dot{\gamma}_{s}^{\epsilon} \left|D_{U_{2}}\nabla_{v_{1}}Y_{s}^{x;l^{\epsilon}}\right|^{2}
        + 2\theta_{3}\dot{\gamma}_{s}^{\epsilon} \left|D_{U_{2}}\nabla_{v_{1}}Y_{s}^{x;l^{\epsilon}}\right| \left|\nabla_{v_{2}}Y_{s}^{x;l^{\epsilon}}\right|\left|\nabla_{v_{1}}Y_{s}^{x;l^{\epsilon}}\right|\\
    &\leq 4\theta_{2}\dot{\gamma}_{s}^{\epsilon} \left|D_{U_{2}}\nabla_{v_{1}}Y_{s}^{x;l^{\epsilon}}\right|^{2}
        + \frac{\theta_{3}^{2}}{2\theta_{2}}\dot{\gamma}_{s}^{\epsilon}\left|\nabla_{v_{2}}Y_{s}^{x;l^{\epsilon}}\right|^{2}
        \left|\nabla_{v_{1}}Y_{s}^{x;l^{\epsilon}}\right|^{2}\\
    &\leq 4\theta_{2}\dot{\gamma}_{s}^{\epsilon}\left|D_{U_{2}}\nabla_{v_{1}}Y_{s}^{x;l^{\epsilon}}\right|^{2}
        + \frac{\theta_{3}^{2}}{2\theta_{2}} \dot{\gamma}_{s}^{\epsilon}\eup^{4\theta_{2} \gamma_{s}^{\epsilon}}\left|v_{1}\right|^{2}\left|v_{2}\right|^{2}.
\end{align*}
Since $D_{U_{2}}\nabla_{v_{1}}Y_{l_{0}^{\epsilon}}^{x;l^{\epsilon}}=0$, we can use Gronwall's inequality to see
\begin{equation*}
    \left|D_{U_{2}}\nabla_{v_{1}}Y_{s}^{x;l^{\epsilon}}\right|^{2}
    \leq \frac{\theta_{3}^{2}}{2\theta_{2}}\left|v_{1}\right|^{2}\left|v_{2}\right|^{2} \int_{l_{0}^{\epsilon}}^{s} \dot{\gamma}_{u}^{\epsilon} \eup^{4\theta_{2}\gamma_{u}^{\epsilon}}
    \eup^{4\theta_{2}(\gamma_{s}^{\epsilon} - \gamma_{u}^{\epsilon})}\,\dif u
    = \frac{\theta_{3}^{2}}{2\theta_{2}}
    |v_{1}|^{2}|v_{2}|^{2} \eup^{4\theta_{2}\gamma_{s}^{\epsilon}}\gamma_{s}^{\epsilon}.
\qedhere
\end{equation*}
\end{proof}

\begin{proof}[\textbf{Proof of Lemma \ref{implement}.}]
From \eqref{gra} we see that
\begin{align*}
    \frac{\dif}{\dif t}\left|\nabla_{v_{1}}X_{t}^{x;l}\right|^{2}
    = 2\left\langle\nabla_{v_{1}}X_{t}^{x;l},\:\nabla_{\nabla_{v_{1}}X_{t}^{x;l}} b(X_{t}^{x;l})\right\rangle
    \leq 2\theta_{2}\left|\nabla_{v_{1}}X_{t}^{x;l}\right|^2.
\end{align*}
This yields for all $t\in(0,1]$
\begin{align*}
    \left|\nabla_{v_1}X_{t}^{x;l}\right|^{2}
    \leq \eup^{2\theta_{2}t}\left|v_{1}\right|^{2}
    \leq \eup^{2\theta_{2}} \left|v_{1}\right|^{2},
\end{align*}
i.e.\ \eqref{grades} holds.

Using \eqref{equality} with $s=t$ and $i=2$, the chain rule and Bismut's formula \eqref{bismut}, we see that
\begin{align*}
    &\Ee \left[\nabla_{\nabla_{v_{2}}Y_{t}^{x;l^{\epsilon}}}\nabla_{\nabla_{v_{1}}Y_{t}^{x;l^{\epsilon}}}\phi\bigl(Y_{t}^{x;l^{\epsilon}}\bigr)\right]\\
    &\quad =\Ee \left[\nabla_{D_{U_{2}}Y_{t}^{x;l^{\epsilon}}}\nabla_{\nabla_{v_{1}}Y_{t}^{x;l^{\epsilon}}}\phi\bigl(Y_{t}^{x;l^{\epsilon}}\bigr)\right]\\
    &\quad =\Ee \left[D_{U_{2}} \left(\nabla_{\nabla_{v_{1}}Y_{t}^{x;l^{\epsilon}}}\phi\bigl(Y_{t}^{x;l^{\epsilon}}\bigr)\right)\right]
            - \Ee \left[\nabla_{D_{U_{2}}\nabla_{v_{1}}Y_{t}^{x;l^{\epsilon}}}\phi\bigl(Y_{t}^{x;l^{\epsilon}}\bigr)\right]\\
    &\quad =\frac{1}{t-l_{0}^{\epsilon}}\Ee  \left[\nabla_{\nabla_{v_{1}}Y_{t}^{x;l^{\epsilon}}}\phi\bigl(Y_{t}^{x;l^{\epsilon}}\bigr) \int_{0}^{t}\left\langle\nabla_{v_{2}} Y_{s}^{x;l^{\epsilon}},\dif W_{s}\right\rangle\right]
    -\Ee \left[\nabla_{D_{U_{2}}
    \nabla_{v_{1}}Y_{t}^{x;l^{\epsilon}}}\phi \bigl(Y_{t}^{x;l^{\epsilon}}\bigr)\right].
\end{align*}
We can now use the fact that for each $t\geq 0$,
\begin{align*}
    Y_{l_{t}^{\epsilon}}^{x;l^{\epsilon}} = X_{t}^{x;l^{\epsilon}}
    \quad\text{and}\quad
    \nabla_{v}Y_{l_{t}^{\epsilon}}^{x;l^{\epsilon}} = \nabla_{v}X_{t}^{x;l^{\epsilon}}.
\end{align*}
Replacing $t$ with $l_{t}^{\epsilon}$ in \eqref{mark}, this yields
\begin{align*}
    &\Ee \left[\nabla_{\nabla_{v_{2}}X_{t}^{x;l^{\epsilon}}}\nabla_{\nabla_{v_{1}}X_{t}^{x;l^{\epsilon}}}\phi\bigl(X_{t}^{x;l^{\epsilon}}\bigr)\right]\\
    &\quad =\frac{1}{l_{t}^{\epsilon}-l_{0}^{\epsilon}}\, \Ee \left[\nabla_{\nabla_{v_{1}}X_{t}^{x;l^{\epsilon}}}
    \phi\bigl(X_{t}^{x;l^{\epsilon}}\bigr)
    \int_{0}^{l_{t}^{\epsilon}}\left\langle\nabla_{v_{2}} Y_{s}^{x;l^{\epsilon}},\dif W_{s}\right\rangle\right]
    -\Ee \left[\nabla_{D_{U_{2}}\nabla_{v_{1}}Y_{l_{t}^{\epsilon}}^{x;l^{\epsilon}}}\phi\bigl(X_{t}^{x;l^{\epsilon}}\bigr)\right]\\
    &\quad =\frac{1}{l_{t}^{\epsilon}-l_{0}^{\epsilon}}\, \Ee \left[\nabla_{\nabla_{v_{1}}
    X_{t}^{x;l^{\epsilon}}}\phi\bigl(X_{t}^{x;l^{\epsilon}}\bigr)
    \int_{0}^{t}\left\langle\nabla_{v_{2}}X_{s}^{x;l^{\epsilon}}
    ,\dif W_{l_{s}^{\epsilon}}\right\rangle\right]
    -\Ee \left[\nabla_{D_{U_{2}}\nabla_{v_{1}}Y_{l_{t}^{\epsilon}}^{x;l^{\epsilon}}}\phi\left(X_{t}^{x;l^{\epsilon}} \right)\right].
\end{align*}
Since $\gamma_{l_{t}^{\epsilon}}^{\epsilon}=t$, \eqref{SLS} implies that for every $t\in(0,1]$
\begin{align*}
    \left|\Ee \left[\nabla_{D_{U_{2}}\nabla_{v_{1}}Y_{l_{t}^{\epsilon}}^{x;l^{\epsilon}}}\phi\bigl(X_{t}^{x;l^{\epsilon}}\bigr)\right]\right|
    &=\left|\Ee \left[\left\langle\nabla\phi\bigl(X_{t}^{x;l^{\epsilon}}\bigr),D_{U_{2}}\nabla_{v_{1}}Y_{l_{t}^{\epsilon}}^{x;l^{\epsilon}}\right\rangle\right]\right|\\
    &\leq \left\|\nabla\phi\right\|_{\infty} \frac{\theta_{3}}{\sqrt{2\theta_{2}}}\, \eup^{2\theta_{2}\gamma_{l_{t}^{\epsilon}}^{\epsilon}} \sqrt{\gamma_{l_{t}^{\epsilon}}^{\epsilon}} \left|v_{1}\right| \left|v_{2}\right|\\
    &= \left\|\nabla\phi\right\|_{\infty} \frac{\theta_{3}}{\sqrt{2\theta_{2}}}\, \eup^{2\theta_{2}t} \sqrt{t} \left|v_{1}\right| \left|v_{2}\right|\\
    &\leq \left\|\nabla\phi\right\|_{\infty} \frac{\theta_{3}}{\sqrt{2\theta_{2}}} \,\eup^{2\theta_{2}} \left|v_{1}\right| \left|v_{2}\right|,
\end{align*}
and so
\begin{equation}\label{upperepsilon}
\begin{aligned}
    &\left|\Ee \left[\nabla_{\nabla_{v_{2}}X_{t}^{x;l^{\epsilon}}}\nabla_{\nabla_{v_{1}}X_{t}^{x;l^{\epsilon}}}\phi \bigl(X_{t}^{x;l^{\epsilon}}\bigr)\right]\right|\\
    &\leq\left|\frac{1}{l_{t}^{\epsilon}-l_{0}^{\epsilon}}\, \Ee \left[\nabla_{\nabla_{v_{1}}X_{t}^{x;l^{\epsilon}}}
    \phi\bigl(X_{t}^{x;l^{\epsilon}}\bigr)
    \int_{0}^{t}
    \left\langle\nabla_{v_{2}}X_{s}^{x;l^{\epsilon}},\dif W_{l_{s}^{\epsilon}}\right\rangle\right]\right|
    + \left\|\nabla\phi\right\|_{\infty} \frac{\theta_{3}}{\sqrt{2\theta_{2}}} \,\eup^{2\theta_{2}} \left|v_{1}\right| \left|v_{2}\right|.
\end{aligned}
\end{equation}
By the same argument as in the proof of \cite[Lemma 2.5]{Z}, we obtain
\begin{equation}\label{11upperepsilon}
\begin{aligned}
    &\lim_{\epsilon\to 0}\frac{1}{l_{t}^{\epsilon}-l_{0}^{\epsilon}}
    \Ee \left[\nabla_{\nabla_{v_{1}}X_{t}^{x;l^{\epsilon}}}
    \phi\bigl(X_{t}^{x;l^{\epsilon}}\bigr)
    \int_{0}^{t}\left\langle
    \nabla_{v_{2}}X_{s}^{x;l^{\epsilon}},\dif W_{l_{s}^{\epsilon}}\right\rangle\right]\\
    &\quad=\frac{1}{l_{t}}\,\Ee  \left[\nabla_{\nabla_{v_{1}}X_{t}^{x;l}}
    \phi\left(X_{t}^{x;l}\right)
    \int_{0}^{t}\left\langle\nabla_{v_{2}}X_{s}^{x;l},\dif W_{l_{s}}\right\rangle\right].
\end{aligned}
\end{equation}
On the other hand, from \cite[Lemma 2.2]{Z}, we know that
\begin{equation}\label{22upperepsilon}
\begin{aligned}
    \lim_{\epsilon\to 0}\Ee  \left[\nabla_{\nabla_{v_{2}}X_{t}^{x;l^{\epsilon}}}\nabla_{\nabla_{v_{1}} X_{t}^{x;l^{\epsilon}}}\phi\bigl(X_{t}^{x;l^{\epsilon}}\bigr)\right]
    = \Ee \left[\nabla_{\nabla_{v_{2}}X_{t}^{x;l}}\nabla_{\nabla_{v_{1}}X_{t}^{x;l}}\phi\left(X_{t}^{x;l}\right)\right].
\end{aligned}
\end{equation}
Letting in \eqref{upperepsilon} $\epsilon\to 0$ and using \eqref{11upperepsilon} and \eqref{22upperepsilon}, completes the proof.
\end{proof}

\subsection{Proof of Lemma~\ref{regular}}\label{suppleimp}
Because of \eqref{firstbound}, we can use the differentiation theorem for parameter dependent integrals to get
\begin{align*}
    \left|\nabla_{v}P_{t}h(x)\right|
    = \left|\nabla_{v}\Ee [h(X_{t}^{x})]\right|
    =& \left|\Ee \left[\nabla_{\nabla_{v}X_{t}^{x}} h(X_{t}^{x})\right]\right|\\
    =&\left|\Ee \left[\left\langle\nabla h(X_{t}^{x}),\nabla_{v}X_{t}^{x}\right\rangle\right]\right|
    \leq \left\|\nabla h\right\| \eup^{\theta_{2}} \left|v\right|
    \leq \eup^{\theta_{2}}\left|v\right|.
\end{align*}

In order to see the second inequality, we define for every $\epsilon>0$
\begin{align}\label{ma2}
    h_{\epsilon}(x) := \int_{\rd} g_\epsilon(y)h(x-y)\,\dif y,
\end{align}
where $g_\epsilon$ is the density of the normal distribution $N(0,\epsilon^{2}I_{d})$.  It is easy to see that $h_{\epsilon}$ is smooth, $\lim_{\epsilon\to 0}h_{\epsilon}(x)=h(x)$, $\lim_{\epsilon\to 0}\nabla h_{\epsilon}(x)=\nabla h(x)$ and $|h_{\epsilon}(x)|\leq C(1+|x|)$ for all $x\in\rd$ and some $C>0$.  Moreover, $\|\nabla h_{\epsilon}\|\leq\|\nabla h\|\leq1$.  Using the differentiability theorem for parameter dependent integrals we get
\begin{align*}
    \nabla_{v_{2}}\nabla_{v_{1}}\Ee \left[h_{\epsilon}(X_{t}^{x})\right]
    = \Ee \left[\nabla_{\nabla_{v_{2}}\nabla_{v_{1}}X_{t}^{x}} h_{\epsilon}(X_{t}^{x})\right] + \Ee \left[\nabla_{\nabla_{v_{2}}X_{t}^{x}}\nabla_{\nabla_{v_{1}}X_{t}^{x}}h_{\epsilon}(X_{t}^{x})\right].
\end{align*}
From \eqref{secondbound} we get
\begin{align*}
    \left|\Ee \left[\nabla_{\nabla_{v_{2}} \nabla_{v_{1}}X_{t}^{x}} h_{\epsilon}(X_{t}^{x})\right]\right|
    =\left|\Ee \left[\left\langle\nabla h_{\epsilon}(X_{t}^{x}),\nabla_{v_{2}} \nabla_{v_{1}}X_{t}^{x}\right\rangle\right]\right|
    \leq \frac{\theta_{3}}{2\sqrt{2}\theta_{2}}\,\eup^{4\theta_{2}}\left|v_{1}\right|\left|v_{2}\right|.
\end{align*}
It follows from Lemma \ref{implement}, \cite[(3.3)]{Z} and \eqref{firstbound} that
\begin{align*}
    &\left|\Ee \big[\nabla_{\nabla_{v_{2}}X_{t}^{x}}\nabla_{\nabla_{v_{1}}X_{t}^{x}}h_{\epsilon}(X_{t}^{x})\big]\right|\\
    &\quad\leq \left|\Ee \left[\frac{1}{S_{t}}\nabla_{\nabla_{v_{1}}X_{t}^{x}} h_{\epsilon}\left(X_{t}^{x}\right)
    \int_{0}^{t}\left\langle\nabla_{v_{2}}X_{s}^{x},\dif W_{S_{s}}\right\rangle\right]\right|
    + \|\nabla h_{\epsilon}\|_{\infty}\frac{\theta_{3}}{\sqrt{2\theta_{2}}} \,\eup^{2\theta_{2}}\left|v_{1}\right|\left|v_{2}\right|\\
    &\quad\leq \eup^{\theta_{2}}\left|v_{1}\right|\Ee  \left[\frac{1}{S_{t}}\left| \int_{0}^{t}\left\langle\nabla_{v_{2}}X_{s}^{x},\dif W_{S_{s}}\right\rangle\right|\right]
    +\frac{\theta_{3}}{\sqrt{2\theta_{2}}}\,
    \eup^{2\theta_{2}}\left|v_{1}\right|\left|v_{2}\right|.
\end{align*}
The Cauchy--Schwarz inequality, It\^o's isometry and \eqref{grades} give
\begin{align*}
    \Ee \left[\frac{1}{S_{t}}\left| \int_{0}^{t}\left\langle\nabla_{v_{2}}X_{s}^{x},\dif W_{S_{s}}\right\rangle\right|\right]
    &= \int_{\mathds{S}}\frac{1}{l_t}\,\Ee
    \left|\int_{0}^{t}\left\langle\nabla_{v_{2}}X_{s}^{x;l},\dif W_{l_{s}}\right\rangle\right| \mu_{\mathds{S}}(\dif l)\\
    &\leq \int_{\mathds{S}}\frac{1}{l_t}\left(\Ee  \int_{0}^{t}|\nabla_{v_{2}}X_{s}^{x;l}|^2\,\dif l_{s}\right)^{1/2}\,\mu_{\mathds{S}}(\dif l)\\
    &\leq \eup^{\theta_2}\left|v_{2}\right|\int_{\mathds{S}} \frac{1}{\sqrt{l_t}}\,\mu_{\mathds{S}}(\dif l)\\
    &= \eup^{\theta_2}\left|v_{2}\right|\Ee \left[ S_t^{-1/2} \right]\\
    &\leq C\eup^{\theta_2}\left|v_{2}\right| t^{-1/\alpha},
\end{align*}
where the last inequality is taken from \cite[Theorem 2.1\,(ii)\,(c)]{DSS17}. Thus, we have for all $t\in(0,1]$,
\begin{align*}
    \left|\nabla_{v_{2}}\nabla_{v_{1}}\Ee \left[h_{\epsilon}(X_{t}^{x})\right]\right|
    &\leq \left|\Ee \left[\nabla_{\nabla_{v_{2}} \nabla_{v_{1}}X_{t}^{x}}h_{\epsilon}(X_{t}^{x})\right]\right|
    + \left|\Ee \left[\nabla_{\nabla_{v_{2}}X_{t}^{x}}\nabla_{\nabla_{v_{1}}X_{t}^{x}}h_{\epsilon}(X_{t}^{x})\right]\right|\\
    &\leq \frac{\theta_{3}}{2\sqrt{2}\theta_{2}}\,\eup^{4\theta_{2}}\left|v_{1}\right|\left|v_{2}\right|
    + \eup^{\theta_{2}}\left|v_{1}\right|C\eup^{\theta_{2}}\left|v_{2}\right|t^{-1/\alpha}
    + \frac{\theta_{3}}{\sqrt{2\theta_{2}}}\,\eup^{2\theta_{2}}\left|v_{1}\right|\left|v_{2}\right|\\
    &\leq \eup^{4\theta_2}\left( \frac{\theta_{3}}{2\sqrt{2}\theta_{2}}+C t^{-1/\alpha}+\frac{\theta_{3}}{\sqrt{2\theta_{2}}}\right)\left|v_{1}\right|\left|v_{2}\right|\\
    &\leq C\eup^{4\theta_2}t^{-1/\alpha}\left|v_{1}\right|\left|v_{2}\right|.
\end{align*}
Finally, we can let $\epsilon\to 0$ using dominated convergence,
\begin{gather*}
    \lim_{\epsilon\to 0}\nabla_{v_{2}}\nabla_{v_{1}} \Ee \left[h_{\epsilon}(X_{t}^{x})\right]
    = \nabla_{v_{2}}\nabla_{v_{1}}\Ee \left[h(X_{t}^{x})\right],
\end{gather*}
completing the proof of Lemma~\ref{regular}.
\qed

\begin{appendix}

\section{Proofs of Propositions \ref{W1.1-0}, \ref{W1.1} and \ref{W1.1-1}, and Lemma \ref{E-Mmoment}}\label{sec2}
\begin{proof}[Proof of Proposition \ref{W1.1-0}]
The generator $\mathcal{L}^{\alpha}$ of the process $X_{t}$ is given
\begin{align*}
    \mathcal{L}^{\alpha}f(x)=\left\langle b(x),\nabla f(x)\right\rangle+(-\Delta)^{\alpha/2}f(x),
    \quad f\in\mathcal{C}^{2}(\rd,\real)
\end{align*}
where $(-\Delta)^{\alpha/2}$ is the fractional Laplace operator, which is the generator of the rotationally symmetric $\alpha$-stable L\'evy process $Z_{t}$; it is defined as a principal value integral
\begin{align}\label{frac}
    (-\Delta)^{\alpha/2}f(x)
    = C_{d,\alpha}\cdot\textrm{p.v.}\int_{\rd} \left(f(x+y)-f(x)\right) \frac{\dif y}{|y|^{\alpha+d}}.
\end{align}
It is not difficult to see that for all functions from the set
\begin{align*}
    \mathcal{D}
    :=
    \left\{f\in \mathcal{C}^{2}(\rd,\real)\,;\,\int_{|z|\geq1}\left| f(x+z)-f(x)\right| \frac{\dif z}{|z|^{\alpha+d}}<\infty\right\}.
\end{align*}
$\mathcal{L}^\alpha f$ is well-defined; moreover $\mathcal{D}\times \mathcal{L}^\alpha(\mathcal{D})$ can be embedded into the \emph{full generator} $\widehat{\mathcal{L}^{\alpha}}$, i.e.\ the set of all pairs of (bounded) Borel functions $(f,g)$ such that $f(X_t) - f(X_0) - \int_0^t g(X_s)\,\dif s$ is a (local) martingale, see the discussion in \cite[pp.~24--26]{BSW}.

Recall that $V_{\beta}(x)=(1+|x|^{2})^{\beta/2}$. It is easy to check that $V_{\beta}\in\mathcal{D}(\mathcal{L}^{\alpha})$. Since
\begin{gather*}
    \nabla V_{\beta}(x)
    = \frac{\beta x}{(1+|x|^{2})^{\frac{2-\beta}{2}}},
    \quad
    \nabla^{2}V_{\beta}(x)
    = \frac{\beta I_d}{(1+|x|^{2})^{1-\frac{\beta}{2}}}
    + \frac{\beta(\beta-2) xx^{\top}}{(1+|x|^{2})^{2-\frac{\beta}{2}}},
\end{gather*}
($I_d$ denotes the $d \times d$ identity matrix) we see that for any $x\in\rd$
\begin{gather*}
    |\nabla V_{\beta}(x)|\leq\beta|x|^{\beta-1},\quad
    \|\nabla^{2}V_{\beta}(x)\|_{\mathrm{HS}}
    \leq \beta(3-\beta)\sqrt{d}.
\end{gather*}
Thus, \eqref{diss} and Young's inequality ($AB \leq \frac 1p A^p + \frac 1q B^q$ with $p=\beta$ and $q = \beta/(\beta-1)$) imply
\begin{align*}
    \left\langle b(x),\nabla V_{\beta}(x)\right\rangle
    &=\frac{\beta}{(1+|x|^{2})^{\frac{2-\beta}{2}}}\left\langle b(x)-b(0),x\right\rangle+\frac{\beta}{(1+|x|^{2})^{\frac{2-\beta}{2}}}\left\langle b(0),x\right\rangle\\
    &\leq-\theta_{1}\frac{\beta|x|^{2}}{(1+|x|^{2})^{\frac{2-\beta}{2}}} + \frac{\beta K}{(1+|x|^{2})^{\frac{2-\beta}{2}}} + \frac{\beta|b(0)||x|}{(1+|x|^{2})^{\frac{2-\beta}{2}}}\\
    &\leq-\theta_{1}\beta V_{\beta}(x) + \theta_{1}\beta+\beta K+\beta|b(0)||x|^{\beta-1}\\
    &\leq-\theta_{1}V_{\beta}(x) + \theta_{1}\beta+\beta K+\theta_{1}^{1-\beta}|b(0)|^{\beta}.
\end{align*}
Therefore, we see from \eqref{frac} that
\begin{equation}\label{fracbound}
\begin{aligned}
    (-\Delta)^{\alpha/2}V_{\beta}(x)
    &=C_{d,\alpha}\int \left( V_{\beta}(x+y)-V_{\beta}(x)-\left\langle\nabla V_{\beta}(x),y\right\rangle \II_{(0,1)}(|y|)\right)\frac{\dif y}{|y|^{\alpha+d}}\\
    &=C_{d,\alpha}\int_{|y|< 1} \int_{0}^{1}\int_{0}^{r} \left\langle\nabla^{2}V_{\beta}(x+sy),yy^{\top}\right\rangle_{\mathrm{HS}} \,\dif s\,\dif r\,\frac{\dif y}{|y|^{\alpha+d}}\\
    &\qquad\mbox{}+C_{d,\alpha}\int_{|y|\geq 1}\int_{0}^{1} \left\langle\nabla V_{\beta}(x+ry),y\right\rangle \,\dif r\,\frac{\dif y}{|y|^{\alpha+d}}\\
    &\leq C_{d,\alpha}\frac{\beta(3-\beta)}{2}\sqrt{d}\int_{|y|<1}\frac{|y|^{2}}{|y|^{\alpha+d}}\,\dif y
        + C_{d,\alpha}\beta\int_{|y|\geq1}\frac{|x|^{\beta-1}|y|+|y|^{\beta}}{|y|^{\alpha+d}}\,\dif y\\
    &=\frac{C_{d,\alpha}\beta(3-\beta)\sqrt{d}\sigma_{d-1}}{2(2-\alpha)} + C_{d,\alpha}\beta\sigma_{d-1}\left(\frac{|x|^{\beta-1}}{\alpha-1}+\frac{1}{\alpha-\beta}\right).
\end{aligned}
\end{equation}
Again by Young's inequality we get
\begin{gather*}
    \left|(-\Delta)^{\alpha/2}V_{\beta}(x)\right|
    \leq \frac{C_{d,\alpha}\beta(3-\beta)\sqrt{d}\sigma_{d-1}}{2(2-\alpha)}
    + \frac{C_{d,\alpha}\beta\sigma_{d-1}}{\alpha-\beta}+\left(\frac{\theta_{1}}{4}\right)^{1-\beta} \left(\frac{C_{d,\alpha}\sigma_{d-1}}{\alpha-1}\right)^{\beta}
    + \frac{\theta_{1}}{4}V_{\beta}(x).
\end{gather*}
Hence, we have
\begin{align}\label{Lyapunov}
    \mathcal{L}^{\alpha}V_{\beta}(x)
    \leq -\lambda_{1}V_{\beta}(x)+q_{1}\II_{A_{1}}(x),
\end{align}
with $\lambda_{1}=\frac{1}{2} \theta_{1}$,
\begin{gather*}
    q_{1}
    = \theta_{1}\beta+\beta K + \theta_{1}^{1-\beta}|b(0)|^{\beta} + \frac{C_{d,\alpha}\beta(3-\beta)\sqrt{d}\sigma_{d-1}}{2(2-\alpha)}
    + \frac{C_{d,\alpha}\beta\sigma_{d-1}}{\alpha-\beta} + \left(\frac{\theta_{1}}{4}\right)^{1-\beta}\left(\frac{C_{d,\alpha}\sigma_{d-1}}{\alpha-1}\right)^{\beta},
\end{gather*}
and the compact set $A_{1}=\left\{x\in\rd \,:\, |x|\leq \left({4}{\theta_{1}}^{-1}q_{1}\right)^{{1}/{\beta}}\right\}$.

Thus, \cite[Theorem 5.1]{M-T3} yields that the process $(X_{t}^{x})_{t\geq 0}$ is ergodic, i.e.\ there exists a unique invariant probability measure $\mu$ such that for all $x\in\rd$ and $t>0$,
\begin{align*}
    \lim_{t\to\infty}\|P_{t}(x,\cdot)-\mu\|_{\mathrm{TV}}=0,
\end{align*}
where $P_{t}(x,dz)$ is the transition function of the process $(X_{t}^{x})_{t\geq 0}$ and $\|\cdot\|_{\mathrm{TV}}$ denotes the total variation norm on the space of signed measures. Furthermore, because of the inequality above and \cite[Theorem 6.1]{M-T3}, we have
\begin{align*}
    \sup_{|f|\leq V_{\beta}} \big|\Ee [f(X_{t}^{x})]-\mu(f)\big|
    \leq c_1V_{\beta}(x)\eup^{-c_2t}
\end{align*}
for suitable constants $c_1,c_2>0$. In addition, by It\^o's formula, the integrability of $X_{t}^{x}$ can be derived directly from the Lyapunov condition \eqref{Lyapunov}.
\end{proof}

\begin{proof}[Proof of Proposition \ref{W1.1}]
For any $x,y\in\rd$, \eqref{gra} implies that
\begin{align*}
    \left\langle b(x)-b(y),x-y\right\rangle\leq\theta_{2}|x-y|^{2},
\end{align*}
and \eqref{diss} shows for all $|x-y|^2 >{2K}/{\theta_{1}}$ that
\begin{align*}
    \left\langle b(x)-b(y),x-y\right\rangle
    \leq -\theta_{1}|x-y|^{2} + K
    \leq -\frac{\theta_{1}}{2}|x-y|^{2}.
\end{align*}
Hence, we can use \cite[Theorem 1.2]{Wan16} with $K_{1}=\theta_{2}$, $K_{2}=\frac{\theta_{1}}{2}$ and $L_{0}=\frac{2K}{\theta_{1}}$ to get the desired estimate.
\end{proof}

\begin{proof}[Proof of Proposition \ref{W1.1-1}]
We show only \eqref{exact22}, as \eqref{exact2} can be proved in the same way.

Denote by $P(x,\dif y) = \mathds{P}(Y_{1}\in \dif y \mid Y_{0}=x)$. Since $V_{1}(y)\leq|y|+1$ and
\begin{align*}
    Y_{1}
    = x+\eta b(x)+Z_{\eta},
\end{align*}
we have
\begin{align*}
    \int_{\rd}V_{1}(y)\,P(x,\dif y)
    &\leq\int_{\rd}(|y|+1)p\left(\eta,y-x-\eta b(x)\right)\dif y\\
    &=\int_{\rd} \left(|z+x+\eta b(x)|+1\right)p(\eta,z)\,\dif z\\
    &\leq \int_{\rd} \left(|z|+|x+\eta b(x)|+1\right) p(\eta,z)\,\dif z\\
    &\leq \Ee|Z_{\eta}|+\big|x+\eta\big(b(x)-b(0)\big)\big|+\eta|b(0)|+1.
\end{align*}
By \eqref{diss} and \eqref{linear}, we further have
\begin{align*}
    \big|x+\eta\big(b(x)-b(0)\big)\big|^{2}
    &= |x|^{2}+2\eta \left\langle b(x)-b(0),x\right\rangle + \eta^{2}|b(x)-b(0)|^{2}\\
    &\leq \left(1-2\theta_{1}\eta+\theta_{2}^{2}\eta^{2}\right) |x|^{2}+ 2 K\eta,
\end{align*}
which implies
\begin{align*}
    \int_{\rd}V_{1}(y)\,P(x,\dif y)
    &\leq \eta^{\frac{1}{\alpha}}\Ee|Z_{1}|+(1-2\theta_{1} \eta+\theta_{2}^{2}\eta^{2})^{1/2}|x| + \sqrt{2K\eta}+\eta|b(0)|+1\\
    &\leq \left(1-\theta_{1}\eta\right)|x| +\eta^{\frac{1}{\alpha}}\Ee|Z_{1}| + \sqrt{2K\eta} + \eta|b(0)| + 1,
\end{align*}
where the last two inequalities hold because of $\eta < \min\left\{1,\,\theta_{1}\theta_{2}^{-2},\,\theta_{1}^{-1}\right\}$. Hence, we have
\begin{align*}
    \int_{\rd} V_{1}(y)\, P(x,\dif y)
    \leq\lambda_2 V_{1}(x)+q_{2}\II_{A_{2}}(x)
\end{align*}
with
\begin{gather*}
    \lambda_{2}=1-\frac{\theta_{1}}{2}\eta<1,\quad q_{2}=1+\frac{\theta_{1}}{2}\eta
    +\eta^{\frac{1}{\alpha}}\Ee|Z_{1}|+\sqrt{2K\eta}+\eta|b(0)|,\\
    \text{and the compact set\ \ }
    A_{2} = \left\{x\in\rd\,:\, |x|\leq1+\frac{2}{\theta_{1}}\Ee|Z_{1}|\eta^{\frac{1}{\alpha}} + \frac{2\sqrt{2K}}{\theta_{1}}\,\eta^{-\frac{1}{2}}+\frac{2|b(0)|}{\theta_{1}}\right\}.
\end{gather*}
The proof of irreducibility is standard, see e.g.\ \cite[Appendix A]{LTX20}.

We can now use this and \cite[Theorem 6.3]{M-T2} to see that the process $(Y_{k}^{x})_{k\geq 0}$ is exponentially ergodic, i.e.\ there exists a unique invariant probability $\mu_{\eta}$ such that for all $x\in\rd$ and $t>0$,
\begin{gather*}
    \sup_{|f|\leq V_{1}}|\Ee f(Y_{k})-\mu_{\eta}(f)|
    \leq c_1 V_{1}(x)\eup^{-c_2k},
\end{gather*}
for suitable constants $c_1,c_2>0$.
\end{proof}

\begin{proof}[Proof of Lemma \ref{E-Mmoment}]
We show only \eqref{momentd2} as the inequality \eqref{momentd} can be proved in the same way.

Notice that $|y|^{\beta}\leq V_{\beta}(y)$ and
\begin{align*}
    V_{\beta}(Y_{k+1})
    &=V_{\beta}\left(Y_{k} + \eta b(Y_{k}) + Z_{\eta}\right)\\
    &=V_{\beta}\left(Y_{k}+\eta b(Y_{k})\right)+V_{\beta}\left(Y_{k} + \eta b(Y_{k}) + Z_{\eta}\right)-V_{\beta}\left(Y_{k}+\eta b(Y_{k})\right)\\
    &=V_{\beta}(Y_{k})+\int_{0}^{\eta}\left\langle\nabla V_{\beta}\left(Y_{k}+rb(Y_{k})\right),b(Y_{k})\right\rangle\dif r\\
    &\qquad\mbox{}+V_{\beta}\left(Y_{k} + \eta b(Y_{k}) + Z_{\eta}\right)-V_{\beta}\left(Y_{k}+\eta b(Y_{k})\right),
\end{align*}
where $Z_{\eta}$ is independent of $Y_{k}$. Since $\nabla V_{\beta}(x) = {\beta x}{(1+|x|^{2})^{-\frac{2-\beta}{2}}}$,
\eqref{diss} implies that
\begin{equation}\label{modiss1}
\begin{aligned}
    &\int_{0}^{\eta}\left\langle\nabla V_{\beta}\left(Y_{k}+rb(Y_{k})\right),b(Y_{k})\right\rangle\,\dif r\\
    &\qquad\leq \int_{0}^{\eta}\frac{\beta\left\langle Y_{k},b(Y_{k})\right\rangle+\beta r|b(Y_{k})|^{2}}
    {\left(1+|Y_{k}+rb(Y_{k})|^{2}\right)^{\frac{2-\beta}{2}}}\,\dif r\\
    &\qquad\leq\int_{0}^{\eta}\frac{\beta\left\langle Y_{k},b(Y_{k}) -b(0)\right\rangle+\beta|b(0)||Y_{k}|+\beta r|b(Y_{k})|^{2}}
    {\left(1+|Y_{k}+rb(Y_{k})|^{2}\right)^{\frac{2-\beta}{2}}}\,\dif r\\
    &\qquad\leq \int_{0}^{\eta}\frac{-\theta_{1}\beta|Y_{k}|^{2}+\beta K+\beta|b(0)||Y_{k}|+\beta r|b(Y_{k})|^{2}}
    {\left(1+|Y_{k}+rb(Y_{k})|^{2}\right)^{\frac{2-\beta}{2}}}\,\dif r.
\end{aligned}
\end{equation}
One can write by \eqref{linear} and the fact $r\leq \eta\leq\min\left\{1,\frac{\theta_{1}}{8\theta_{2}^{2}},\frac{1}{\theta_{1}}\right\}$
\begin{align*}
    &-\theta_{1}\beta|Y_{k}|^{2}+\beta K+\beta|b(0)||Y_{k}|+\beta r|b(Y_{k})|^{2}\\
    &\qquad\qquad\leq -\frac{3}{4}\theta_{1}\beta|Y_{k}|^{2}+2\eta\theta_{2}^{2}\beta|Y_{k}|^{2}+\frac{\beta|b(0)|^{2}}{\theta_{1}}+2\beta r|b(0)|^{2}+\beta K\\
    &\qquad\qquad\leq -\frac{\theta_{1}\beta}{2}|Y_{k}|^{2}+\frac{\beta|b(0)|^{2}}{\theta_{1}}+2\beta r|b(0)|^{2}+\beta K,
\end{align*}
whereas
\begin{align*}
|Y_{k}&+rb(Y_{k})|^{2}+1\\
    &=|Y_{k}|^{2}+2r\left\langle Y_{k},b(Y_{k})\right\rangle+r^{2}b(Y_{k})^{2}+1\\
    &\leq|Y_{k}|^{2}+2r\left\langle Y_{k},b(Y_{k})-b(0)\right\rangle+2r|b(0)||Y_{k}|+2r^{2}\theta_{2}^{2}|Y_{k}|^{2}+2r^{2}|b(0)|^{2}+1\\
    &\leq (1-2r\theta_{1})|Y_{k}|^{2}+2r|b(0)||Y_{k}|+2r^{2}\theta_{2}^{2}|Y_{k}|^{2}+2\eta^{2}|b(0)|^{2}+1+2\eta K\\
    &\leq|Y_{k}|^{2}+\frac{2r|b(0)|^{2}}{\theta_{1}}+2\eta^{2}|b(0)|^{2}+1+2\eta K.
\end{align*}
Noting that $\left(1+|Y_{k}+rb(Y_{k})|^{2}\right)^{\frac{2-\beta}{2}}\geq1$, we get
\begin{align*}
    &\frac{-\theta_{1}\beta|Y_{k}|^{2}+\beta K+\beta|b(0)||Y_{k}|+\beta r|b(Y_{k})|^{2}}
    {\left(1+|Y_{k}+rb(Y_{k})|^{2}\right)^{\frac{2-\beta}{2}}}\\
    &\quad\leq -\frac{\theta_{1}\beta}{2}\,\frac{|Y_{k}|^{2}}{\left(|Y_{k}|^{2}+\frac{2r|b(0)|^{2}}{\theta_{1}}+2\eta^{2}|b(0)|^{2}+1+2\eta K\right)^{\frac{2-\beta}{2}}} + \frac{\beta|b(0)|^{2}}{\theta_{1}}+2\beta r|b(0)|^{2}+\beta K\\
    &\quad\leq -\frac{\theta_{1}\beta}{2}\,\left(|Y_{k}|^{2}+\frac{2r|b(0)|^{2}}{\theta_{1}}+2\eta^{2}|b(0)|^{2}+1+2\eta K\right)^{\frac{\beta}{2}}+ C_1\\
    &\quad\leq -\frac{\theta_{1}\beta}{2}\,V_{\beta}(Y_k)+C_1,
\end{align*}
where
\begin{gather*}
    C_1:=\frac{\theta_{1}\beta}{2} \left(\frac{2r|b(0)|^{2}}{\theta_{1}}+2\eta^{2}|b(0)|^{2}+1+2\eta K\right)+ \frac{\beta|b(0)|^{2}}{\theta_{1}}+2\beta r|b(0)|^{2}+\beta K.
\end{gather*}
Combining this with \eqref{modiss1}, we arrive at
\begin{gather*}
    \int_{0}^{\eta}\left\langle\nabla V_{\beta}\left(Y_{k}+rb(Y_{k})\right),b(Y_{k})\right\rangle \dif r
    \leq -\frac{\theta_{1}\beta}{2}\,\eta V_{\beta}(Y_{k})+C_1\eta.
\end{gather*}
In addition, for any $y\in\rd$, It\^{o}'s formula and the inequality \eqref{fracbound} imply that
\begin{align*}
    &\left|\Ee \left[V_{\beta}\left(y+ \eta b(y) + Z_{\eta}\right)-V_{\beta}\left(y+\eta b(y)\right)\right]\right|\\
    &=\left|\int_{0}^{\eta}\Ee\left[(-\Delta)^{\alpha/2}V_{\beta}\left(y+ \eta b(y) + Z_{r}\right)\right]\dif r\right|\\
    &\leq\int_{0}^{\eta}\left[\frac{C_{d,\alpha}\beta(3-\beta)\sqrt{d}\sigma_{d-1}}{2(2-\alpha)} + C_{d,\alpha}\beta\sigma_{d-1}\left(\frac{\Ee|y+ \eta b(y) + Z_{r}|^{\beta-1}}{\alpha-1}+\frac{1}{\alpha-\beta}\right)\right]\dif r\\
    &\leq C_{d,\alpha}\beta\sigma_{d-1}\int_{0}^{\eta}\left[\frac{(3-\beta)\sqrt{d}}{2(2-\alpha)} + \frac{|y|^{\beta-1}+ \eta^{\beta-1}|b(y)|^{\beta-1} + \Ee|Z_{r}|^{\beta-1}}{\alpha-1}+\frac{1}{\alpha-\beta}\right]\dif r.
\end{align*}
This yields, in turn,
\begin{align*}
    &\left|\Ee \left[V_{\beta}\left(y+ \eta b(y) + Z_{\eta}\right)-V_{\beta}\left(y+\eta b(y)\right)\right]\right|\\
    &\leq C_{d,\alpha}\beta\sigma_{d-1}\left(\frac{(3-\beta)\sqrt{d}\eta}{2(2-\alpha)}+\frac{\eta}{\alpha-\beta}+\frac{1+\theta_{2}^{\beta-1}}{\alpha-1}\eta|y|^{\beta-1}+\eta|b(0)|^{\beta-1}
    +\int_{0}^{\eta}\frac{\Ee|Z_{r}|^{\beta-1}}{\alpha-1}\dif r\right)\\
    &\leq C_{d,\alpha}\beta\sigma_{d-1}\left(\frac{(3-\beta)\sqrt{d}\eta}{2(2-\alpha)}+\frac{\eta}{\alpha-\beta}+\frac{1+\theta_{2}^{\beta-1}}{\alpha-1}\eta|y|^{\beta-1}+\eta|b(0)|^{\beta-1}
    +\frac{\Ee|Z_{1}|^{\beta-1}}{\alpha-1}\eta\right),
\end{align*}
where the first inequality uses \eqref{linear} and the fact that $0<\eta<1$.
Since $Z_{\eta}$ is independent of $Y_{k}$, we can derive that
\begin{align*}
    &\left|V_{\beta}\left(Y_{k} + \eta b(Y_{k}) + Z_{\eta}\right)-V_{\beta}\left(Y_{k}+\eta b(Y_{k})\right)\right|\\
    &\leq C_{d,\alpha}\beta\sigma_{d-1}\left(\frac{(3-\beta)\sqrt{d}\eta}{2(2-\alpha)}+\frac{\eta}{\alpha-\beta}+\frac{1+\theta_{2}^{\beta-1}}{\alpha-1}\eta\Ee|Y_{k}|^{\beta-1}+\eta|b(0)|^{\beta-1}
    +\frac{\Ee|Z_{1}|^{\beta-1}}{\alpha-1}\eta\right)\\
    &\leq\frac{\theta_{1}(\beta-1)}{2}\eta\,V_{\beta}(Y_k)+C_2\eta,
\end{align*}
where the last inequality follows from Young's inequality and
\begin{align*}
    C_{2}
    &=C_{d,\alpha}\beta\sigma_{d-1}\left(\frac{(3-\beta)\sqrt{d}}{2(2-\alpha)}+\frac{1}{\alpha-\beta}+|b(0)|^{\beta-1}
    +\frac{\Ee|Z_{1}|^{\beta-1}}{\alpha-1}\right)\\
    &\qquad+\left(\frac{C_{d,\alpha}\sigma_{d-1}(1+\theta_{2}^{\beta-1})}{\alpha-1}\right)^{\beta}\left(\frac{2}{\theta_{1}}\right)^{\beta-1}.
\end{align*}
Therefore,
\begin{align*}
    \Ee [V_{\beta}(Y_{k+1})]
    \leq \left(1-\frac{\theta_{1}}{2}\eta\right)\Ee [V_{\beta}(Y_{k})] + (C_1+C_2)\eta,
\end{align*}
which we can iterate this to get
\begin{align*}
    \Ee [V_{\beta}(Y_{k+1})]
    &\leq \left(1-\frac{\theta_{1}}{2}\eta\right)^{k+1}V_{\beta}(x) + (C_1+C_2)\eta\sum_{j=0}^{k} \left(1-\frac{\theta_{1}}{2}\eta\right)^{j}\\
    &\leq V_{\beta}(x)+\frac{2(C_1+C_2)}{\theta_{1}}.
\end{align*}
Using that $V_{\beta}(x)\leq 1+|x|^{\beta}$, we finally get
\begin{align*}
    \Ee |Y_{k}^{x}|^{\beta}
    \leq \Ee [V_{\beta}(Y_{k}^{x})]\leq C(1+|x|^{\beta}),
\end{align*}
for some constant $C$ which is independent of $\eta$.
\end{proof}

\section{Exact rate for the Ornstein--Uhlenbeck process}\label{OU}

In this section, we assume that $\mu$ is the invariant measure of the Ornstein--Uhlenbeck process on $\real$:
\begin{align}\label{OUSDE}
    \dif X_{t}=-X_{t}\,\dif t+\dif Z_{t}, \quad X_{0}=x,
\end{align}
where $Z_{t}$ is a rotationally symmetric $\alpha$-stable L\'evy process ($1<\alpha<2$), and $\tilde{\mu}_{\eta}$ is the invariant measure of
\begin{align*}
    \tilde{Y}_{k+1} = \tilde{Y}_{k}-\eta \tilde{Y}_{k}+\frac{\eta^{1/\alpha}}{\sigma}\widetilde{Z}_{k+1}, \quad k=0,1,2,\dots,
\end{align*}
where $\eta\in(0,1)$, $\tilde{Y}_{0}=x$, $\sigma=\big(\frac{\alpha}{2d_{\alpha}}\big)^{1/\alpha}$ with $d_{\alpha}=C_{1,\alpha}=\left(2\int_{0}^{\infty}\frac{1-\cos y}{y^{\alpha+1}}\,\dif y\right)^{-1}$, and $\widetilde{Z}_{j}$ are i.i.d.\ random variables with density
\begin{align}\label{symmetric}
    p(z) = \frac{\alpha}{2|z|^{\alpha+1}}\,\II_{(1,\infty)}(|z|).
\end{align}

\begin{proposition}\label{ourate}
    For every $x\in\real$ and $\alpha\in(1,2)$,
    \begin{gather*}
        0<
        \liminf_{\eta\downarrow 0} \frac{W_1(\mu,\tilde{\mu}_\eta)}{\eta^{2/\alpha-1}}
        \leq\limsup_{\eta\downarrow 0} \frac{W_1(\mu,\tilde{\mu}_\eta)}{\eta^{2/\alpha-1}}
        <\infty.
    \end{gather*}
\end{proposition}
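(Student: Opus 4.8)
The whole argument is carried out with characteristic functions; write $\hat\mu(\xi):=\Ee\,\eup^{\iup\xi X_\infty}$ and $\hat{\tilde\mu}_\eta(\xi)$ for the characteristic functions of $\mu$ and $\tilde\mu_\eta$. For the \emph{upper bound} there is nothing new to do: the drift $b(x)=-x$ satisfies \textbf{Assumption A} with $\theta_1=\theta_2=1$ and $\theta_3=K=0$, so Theorem~\ref{main}\eqref{main-ii} applies and yields $W_1(\mu,\tilde\mu_\eta)\le C\eta^{2/\alpha-1}$ for all $\eta\in(0,1)$, hence $\limsup_{\eta\downarrow0}\eta^{1-2/\alpha}W_1(\mu,\tilde\mu_\eta)<\infty$. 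All the work is in the lower bound.

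First I would record the two invariant laws. Solving \eqref{OUSDE} gives $\mu=\mathrm{law}\bigl(\int_0^\infty\eup^{-s}\,\dif Z_s\bigr)$, and since the characteristic exponent of $Z$ equals $|\xi|^\alpha$ (its L\'evy measure is $d_\alpha|y|^{-\alpha-1}\,\dif y$ and $2d_\alpha\int_0^\infty u^{-\alpha-1}(1-\cos u)\,\dif u=1$), we get $\hat\mu(\xi)=\exp\bigl(-|\xi|^\alpha\int_0^\infty\eup^{-\alpha s}\,\dif s\bigr)=\eup^{-|\xi|^\alpha/\alpha}$. Iterating \eqref{EM} and letting $k\to\infty$ shows that the invariant law is $\tilde\mu_\eta=\mathrm{law}\bigl(\tfrac{\eta^{1/\alpha}}{\sigma}\sum_{j\ge1}(1-\eta)^{j-1}\widetilde Z_j\bigr)$ (the series converges a.s.\ since $\Ee|\widetilde Z_1|=\tfrac{\alpha}{\alpha-1}<\infty$), so $\hat{\tilde\mu}_\eta(\xi)=\prod_{j\ge0}\phi\bigl(\tfrac{\eta^{1/\alpha}}{\sigma}(1-\eta)^j\xi\bigr)$ with $\phi(\xi)=\Ee\,\eup^{\iup\xi\widetilde Z_1}=\alpha\int_1^\infty z^{-\alpha-1}\cos(\xi z)\,\dif z$. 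Splitting $\int_1^\infty=\int_0^\infty-\int_0^1$, using $\alpha\int_0^\infty u^{-\alpha-1}(1-\cos u)\,\dif u=\alpha/(2d_\alpha)=\sigma^\alpha$ and expanding $\int_0^1$ in a power series, one obtains
\[
    \phi(\xi)=1-\sigma^\alpha|\xi|^\alpha+\frac{\alpha}{2(2-\alpha)}\,|\xi|^2+O(|\xi|^4),\qquad|\xi|\le1,
\]
with a genuine remainder bounded by $C|\xi|^4$.

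The heart of the matter is the following \emph{uniform} expansion: for every fixed $M>0$, uniformly in $|\xi|\le M$ as $\eta\downarrow0$,
\[
    \log\hat{\tilde\mu}_\eta(\xi)=-\frac{|\xi|^\alpha}{\alpha}+\frac{\alpha}{4(2-\alpha)\sigma^2}\,|\xi|^2\,\eta^{2/\alpha-1}+o\bigl(\eta^{2/\alpha-1}\bigr).
\]
To see this, insert the expansion of $\phi$ into $\log\hat{\tilde\mu}_\eta(\xi)=\sum_{j\ge0}\log\phi\bigl(\tfrac{\eta^{1/\alpha}}{\sigma}(1-\eta)^j\xi\bigr)$ (legitimate because, for $\eta$ small, every argument lies in a small neighbourhood of $0$), use $\log(1+w)=w+O(w^2)$, and sum the geometric series $\sum_j(1-\eta)^{\alpha j}=\tfrac1{1-(1-\eta)^\alpha}=\tfrac1{\alpha\eta}(1+O(\eta))$ and $\sum_j(1-\eta)^{2j}=\tfrac1{2\eta}(1+O(\eta))$. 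The terms $-\sigma^\alpha|\zeta_j|^\alpha$ sum to $-|\xi|^\alpha/\alpha+O(\eta)$, the terms $\tfrac{\alpha}{2(2-\alpha)}|\zeta_j|^2$ produce the stated $\eta^{2/\alpha-1}$-term, and every remaining contribution — the $O(|\xi|^4)$ remainder of $\phi$, the $w^2$ term in $\log(1+w)$ (which generates $\sum_j|\zeta_j|^{2\alpha}=O(\eta)$), etc.\ — carries a power of $\eta$ strictly larger than $2/\alpha-1$; here one uses $1<\alpha<2$, so that $2/\alpha-1\in(0,1)$ while $\min\{1,\,2/\alpha,\,4/\alpha-1\}>2/\alpha-1$. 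Exponentiating, $\hat\mu(\xi)-\hat{\tilde\mu}_\eta(\xi)=-\tfrac{\alpha}{4(2-\alpha)\sigma^2}|\xi|^2\hat\mu(\xi)\,\eta^{2/\alpha-1}+o(\eta^{2/\alpha-1})$ uniformly on $|\xi|\le M$. I expect this bookkeeping — keeping track of which of the doubly-indexed terms survive at order $\eta^{2/\alpha-1}$ after summation over $j$ — to be the only real obstacle; the rest is routine.

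Finally, for the lower bound I would test against one band-limited Lipschitz function. Choose $\varphi\in\mathcal{C}^\infty(\real,\real)$ with $\varphi\ge0$, $\varphi\not\equiv0$, even and compactly supported, say $\mathrm{supp}\,\varphi\subseteq[-M,M]$; let $g$ be its inverse Fourier transform (so $\hat g=\varphi$, and $g$ is real, even and Schwartz) and rescale $g$ so that $\|g'\|_\infty\le1$. Then $g\in\mathrm{Lip}(1)$, hence $W_1(\mu,\tilde\mu_\eta)\ge|\mu(g)-\tilde\mu_\eta(g)|$ by the coupling definition \eqref{deW1}. By Fourier inversion and Fubini, $\mu(g)-\tilde\mu_\eta(g)=\tfrac1{2\pi}\int_{-M}^{M}\hat g(\xi)\bigl(\hat\mu(\xi)-\hat{\tilde\mu}_\eta(\xi)\bigr)\,\dif\xi$, an integral over the \emph{fixed} compact set $[-M,M]$; plugging in the uniform expansion above and using $\widehat{g''}(\xi)=-|\xi|^2\hat g(\xi)$, i.e.\ $\tfrac1{2\pi}\int_\real|\xi|^2\hat g(\xi)\hat\mu(\xi)\,\dif\xi=-\int_\real g''\,\dif\mu$, we get
\[
    \lim_{\eta\downarrow0}\eta^{1-2/\alpha}\bigl(\mu(g)-\tilde\mu_\eta(g)\bigr)=\frac{\alpha}{4(2-\alpha)\sigma^2}\int_\real g''\,\dif\mu .
\]
The right-hand side is nonzero, since $\int_\real g''\,\dif\mu=-\tfrac1{2\pi}\int_\real|\xi|^2\varphi(\xi)\eup^{-|\xi|^\alpha/\alpha}\,\dif\xi<0$ (the integrand is nonnegative and not identically zero). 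Hence $\liminf_{\eta\downarrow0}\eta^{1-2/\alpha}W_1(\mu,\tilde\mu_\eta)\ge\tfrac{\alpha}{4(2-\alpha)\sigma^2}\bigl|\int_\real g''\,\dif\mu\bigr|>0$, which together with the upper bound proves the proposition. Insisting on a band-limited $g$ confines the Fourier integral to a fixed compact set, so that the uniform-on-compacts expansion can be integrated directly; this neatly bypasses all delicate large-$|\xi|$ (tail) estimates on $\hat\mu-\hat{\tilde\mu}_\eta$.
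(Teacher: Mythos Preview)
Your proof is correct and follows essentially the same route as the paper: identify the characteristic functions of $\mu$ and $\tilde\mu_\eta$, expand $\phi(\xi)=1-\sigma^\alpha|\xi|^\alpha+c|\xi|^2+\cdots$, sum the geometric series in $j$ to extract the $\eta^{2/\alpha-1}$ correction, and test against a band-limited Lipschitz function to turn this into a lower bound on $W_1$. The only differences are cosmetic --- the paper uses the explicit sinc function $h(x)=\tfrac{1}{M}\tfrac{\sin x}{x}=\tfrac{1}{2M}\int_{-1}^1\eup^{\iup\xi x}\,\dif\xi$ in place of your general smooth $\varphi$, and works with a one-sided inequality $\hat{\tilde\mu}_\eta(\xi)\ge\hat\mu(\xi)\bigl(1-|\xi|^\alpha\Omega(\eta)+|\xi|^2\Omega(\eta^{2/\alpha-1})\bigr)$ rather than your full two-sided asymptotic, which is marginally more economical but does not identify the leading constant as your argument does.
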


\begin{proof}
Since $X_{t}=\eup^{-t}x+\eup^{-t}\int_{0}^{t}\eup^{s}\,\dif Z_{s}$, we get
\begin{align*}
    \Ee \left[\eup^{\iup\xi X_{t}}\right]
    = \eup^{\iup\xi\eup^{-t}x} \Ee \left[ \eup^{\iup\int_{0}^{t}\xi \eup^{-t}\eup^{s}\,\dif Z_{s}}\right]
    &=\eup^{\iup\xi\eup^{-t}x}\eup^{-\int_{0}^{t}|\xi \eup^{-t}\eup^{s}|^{\alpha}\,\dif s}\\
    &=\eup^{\iup\xi\eup^{-t}x} \eup^{-\alpha^{-1}|\xi|^{\alpha}(1-\eup^{-\alpha t})}\\
    &\xrightarrow[t\to\infty]{} \eup^{-\alpha^{-1}|\xi|^{\alpha}}
    =\Ee \left[\eup^{\iup\xi\alpha^{-1/\alpha}Z_{1}}\right].
\end{align*}
Hence, the invariant measure $\mu$ is given by the law of $\alpha^{-1/\alpha}Z_{1}$.

It is easy to see that
\begin{align*}
    \tilde{Y}_{k+1} = (1-\eta)^{k+1}x + \frac{\eta^{1/\alpha}}{\sigma}\sum_{i=0}^{k}(1-\eta)^{i}\widetilde{Z}_{k+1-i}.
\end{align*}
Denote by $\varphi(\xi) = \Ee \bigl[\eup^{\iup\xi\widetilde{Z}_{j}}\bigr]$ the characteristic function of the Pareto distribution. Then we have
\begin{align*}
    \Ee \left[\eup^{\iup\xi \tilde{Y}_{k+1}}\right]
    = \eup^{\iup\xi(1-\eta)^{k+1}x} \prod_{i=0}^{k}\Ee \left[\eup^{\iup\xi\frac{\eta^{1/\alpha}}{\sigma}(1-\eta)^{i}\widetilde{Z}_{k+1-i}}\right]
    = \eup^{\iup\xi(1-\eta)^{k+1}x} \prod_{i=0}^{k}\varphi\left(\frac{\eta^{1/\alpha}}{\sigma}(1-\eta)^{i}\xi\right).
\end{align*}
Letting $k\to\infty$ and denoting by $\tilde{Y}_\eta$ a random variable with distribution $\tilde{\mu}_\eta$, we get
\begin{equation}\label{limitcharc}
    \Ee \left[\eup^{\iup\xi \tilde{Y}_\eta}\right]
    = \prod_{i=0}^\infty\varphi\left(\frac{\eta^{1/\alpha}}{\sigma}(1-\eta)^{i}\xi\right).
\end{equation}

For $\xi>0$, we have
\begin{align*}
    1-\varphi(\xi)
    &= 2\int_1^\infty \left[1-\cos(\xi z)\right] p(z)\,\dif z
    =\alpha\int_1^\infty \left[1-\cos(\xi z)\right] \frac{\dif z}{z^{\alpha+1}}\\
    &=\alpha\xi^{\alpha}\left(\int_{0}^{\infty}\left[1-\cos u\right] \frac{\dif u}{u^{\alpha+1}}
        - \int_{0}^{\xi} \left[1-\cos u\right] \frac{\dif u}{u^{\alpha+1}}\right)\\
    &=\sigma^\alpha\xi^\alpha - \alpha\xi^\alpha\int_{0}^{\xi} \left[1-\cos u\right] \frac{\dif u}{u^{\alpha+1}}.
\end{align*}
Since $p(z)$ is symmetric, cf.\ \eqref{symmetric}, we have $\varphi(\xi)=\varphi(-\xi)$, and so
\begin{gather*}
    \varphi(\xi)
    = 1-\sigma^\alpha|\xi|^\alpha+\alpha|\xi|^\alpha \int_{0}^{|\xi|}\left[1-\cos u\right] \frac{\dif u}{u^{\alpha+1}}.
\end{gather*}
for all $\xi\in\real$. Since $c:=\inf_{0<u\leq1}\left(1-\cos u\right)/u^2 > 0$, we get for all $|\xi|\leq 1$,
\begin{gather*}
    \varphi(\xi)
    \geq 1-\sigma^\alpha|\xi|^\alpha + \alpha|\xi|^\alpha\int_{0}^{|\xi|} cu^2 \frac{\dif u}{u^{\alpha+1}}
    = 1-\sigma^\alpha|\xi|^\alpha + \frac{c\alpha}{2-\alpha}\,|\xi|^2.
\end{gather*}
Thus, for $|\xi|\leq1$ and $0 < \eta < 1\wedge\sigma^\alpha$,
\begin{equation}\label{lowerb}
\begin{aligned}
    \log\varphi\left(\frac{\eta^{1/\alpha}}{\sigma}(1-\eta)^{i}\xi\right)
    &\geq\log\left(1-\sigma^\alpha\left|\frac{\eta^{1/\alpha}}{\sigma}(1-\eta)^{i}\xi
    \right|^\alpha + \frac{c\alpha}{2-\alpha}\left| \frac{\eta^{1/\alpha}}{\sigma}(1-\eta)^{i}\xi\right|^2\right)\\
    &=\log\left(1-\eta(1-\eta)^{\alpha i}|\xi|^\alpha + \frac{c\alpha}{(2-\alpha)\sigma^2}\,\eta^{2/\alpha}(1-\eta)^{2i}|\xi|^2\right).
\end{aligned}
\end{equation}
Observe that
\begin{gather*}
    \lim_{x\downarrow 0} \frac{\log\left(1-x+\frac{c\alpha}{(2-\alpha)\sigma^2}\,x^{2/\alpha}\right)+x}{x^{2/\alpha}}
    = \frac{c\alpha}{(2-\alpha)\sigma^2}.
\end{gather*}
Therefore, there is some constant $C=C(\alpha,\sigma)>0$ such that for small enough $x>0$
\begin{gather*}
    \log\left( 1-x+\frac{c_1\alpha}{(2-\alpha)\sigma^2}\,x^{2/\alpha}\right)
    \geq -x + Cx^{2/\alpha}.
\end{gather*}
If we use this in \eqref{lowerb}, we obtain for all $|\xi|\leq1$ and small enough $\eta>0$,
\begin{gather*}
    \log\varphi\left(\frac{\eta^{1/\alpha}}{\sigma}(1-\eta)^{i}\xi\right)
    \geq -\eta(1-\eta)^{\alpha i}|\xi|^\alpha + C\eta^{2/\alpha}(1-\eta)^{2i}|\xi|^2.
\end{gather*}
Inserting this into \eqref{limitcharc}, we see for all $|\xi|\leq1$ and small enough $\eta>0$
\begin{align*}
    \log\Ee \left[\eup^{\iup\xi \tilde{Y}_\eta}\right]
    &=\sum_{i=0}^\infty\log\varphi\left(\frac{\eta^{1/\alpha}}{\sigma}(1-\eta)^{i}\xi\right)\\
    &\geq-\eta|\xi|^\alpha\sum_{i=0}^\infty(1-\eta)^{\alpha i} + C\eta^{2/\alpha}|\xi|^2 \sum_{i=0}^\infty(1-\eta)^{2i}\\
    &=-|\xi|^\alpha\frac{\eta}{1-(1-\eta)^\alpha} + C|\xi|^2\frac{\eta^{2/\alpha}}{1-(1-\eta)^2}\\
    &=-\frac{1}{\alpha}\,|\xi|^\alpha - |\xi|^\alpha \Omega(\eta)+|\xi|^2 \Omega(\eta^{2/\alpha-1}).
\end{align*}
In the last equality we use that
\begin{gather*}
    \lim_{\eta\downarrow 0} \left[\frac{\eta}{1-(1-\eta)^{\alpha}} - \frac{1}{\alpha}\right] \eta^{-1}
    = \frac{\alpha-1}{2\alpha}
    \quad\text{and}\quad
    \lim_{\eta\downarrow 0} \frac{\eta^{2/\alpha}}{1-(1-\eta)^2}\,\eta^{-2/\alpha+1}
    = \frac 12.
\end{gather*}
Here and in the following, the notation $f(\eta)=\Omega(g(\eta))$ as $\eta\downarrow0$ means
that $\lim_{\eta\downarrow0}\frac{f(\eta)}{g(\eta)}$ is a
positive (finite) constant, where $f$ and $g$ are some positive functions.
With the elementary inequality $\eup^x \geq 1+x$ for $x\in\real$ we see for all $|\xi|\leq1$ and sufficiently small $\eta>0$ that
\begin{align*}
    \Ee \left[\eup^{\iup\xi \tilde{Y}_\eta}\right]
    &\geq \exp\left[ -\frac{1}{\alpha}\,|\xi|^\alpha - |\xi|^\alpha \Omega(\eta)+|\xi|^2 \Omega(\eta^{2/\alpha-1}) \right]\\
    &\geq \eup^{-|\xi|^\alpha/\alpha} \left[1-|\xi|^\alpha \Omega(\eta)+|\xi|^2 \Omega(\eta^{2/\alpha-1}) \right],
\end{align*}
which yields
\begin{equation}\label{wlower}
\begin{aligned}
    \int_{-1}^1\left( \Ee \left[\eup^{\iup\xi \tilde{Y}_\eta}\right] - \Ee \left[\eup^{\iup\xi \alpha^{-1/\alpha}Z_1}\right] \right)\dif\xi
    &\geq \int_{-1}^1 \eup^{-\alpha^{-1}|\xi|^\alpha} \left[-|\xi|^\alpha \Omega(\eta)+|\xi|^2 \Omega(\eta^{2/\alpha-1}) \right]\dif\xi\\
    &= -\Omega(\eta)+ \Omega(\eta^{2/\alpha-1})
    = \Omega(\eta^{2/\alpha-1}).
\end{aligned}
\end{equation}

Define
\begin{gather*}
    h(x)
    :=\frac{1}{M}\left(\frac{\sin x}{x}\II_{\{x\neq0\}} + \II_{\{x=0\}}\right),\quad x\in\real,
\intertext{where}
    M
    :=\sup_{x\in\real\setminus\{0\}}\left|\frac{x\cos x-\sin x}{x^2}\right|\in(0,\infty).
\end{gather*}
Since $h\in\mathrm{Lip}(1)$ and
\begin{gather*}
    h(x)
    =
    \frac{1}{2M}\int_{-1}^1\eup^{\iup\xi x}\,\dif\xi,
    \quad x\in\real,
\end{gather*}
it follows from Fubini's theorem and \eqref{wlower} that
\begin{align*}
    W_1(\mu,\tilde{\mu}_\eta)
    &\geq \left|\Ee \left[h(\tilde{Y}_\eta)\right] - \Ee \left[h(\alpha^{-1/\alpha}Z_1)\right]\right|\\
    &=\left|\int_{\real}\left(\frac{1}{2M}\int_{-1}^1\eup^{\iup\xi x}\,\dif\xi\right)\mathds{P}(\tilde{Y}_\eta\in\dif x)
    -\int_{\real}\left(\frac{1}{2M}\int_{-1}^1\eup^{\iup\xi x}\,\dif\xi\right)\mathds{P}(\alpha^{-1/\alpha}Z_1\in\dif x)\right|\\
    &=\left|\frac{1}{2M}\int_{-1}^1\Ee \left[\eup^{\iup\xi \tilde{Y}_\eta}\right] \dif\xi
    - \frac{1}{2M}\int_{-1}^1\Ee \left[\eup^{\iup\xi \smash{\alpha^{-1/\alpha}}Z_1}\right]\dif\xi \right|\\
    &=\frac{1}{2M}\left|\int_{-1}^1\left(\Ee \left[\eup^{\iup\xi \tilde{Y}_\eta}\right]
    - \Ee \left[\eup^{\iup\xi \smash{\alpha^{-1/\alpha}}Z_1}\right]\right)\dif\xi\right|\\
    &\geq \Omega(\eta^{2/\alpha-1}).
\end{align*}
Combining this with the upper bound in Theorem \ref{main}\,\eqref{main-ii}, finishes the proof.
\end{proof}

{

\section{Alternative proof of Lemma \ref{regular} by Malliavin calculus \cite{Nua06}}\label{supple}

In this section, we will prove Lemma \ref{regular} by Malliavin calculus established in \cite{Nua06}.
Keep the same notations as in Subsection \ref{timec} and we consider the integral form of SDE \eqref{xellepsilon}, that is,
\begin{gather}\label{NSDE}
    X_{t}^{x;l^\epsilon}
    = x+\int_{0}^{t}b(X_{r}^{x;l^\epsilon})\,\dif r
    +W_{l^\epsilon_{t}-l^\epsilon_{0}}.
\end{gather}
Then $X_{t}^{x;l^\epsilon}$ is Malliavin differentiable. In the following we will use the standard notations from \cite{Nua06}.

Recall that $\gamma^{\epsilon}$ is the inverse function of $l^{\epsilon}$. The Malliavin derivative
$D_{s}X_{t}^{x;l^\epsilon}=0$ for $s>l_{t}^{\epsilon}$, which amounts to $\gamma^{\epsilon}_{s}>t$. Thus,
\begin{align*}
D_{s}X_{t}^{x;l^\epsilon}=\II_{\{t\geq\gamma^{\epsilon}_{s}\}}D_{s}X_{t}^{x;l^\epsilon}.
\end{align*}
For $t\geq\gamma^{\epsilon}_{s}$, \eqref{NSDE} implies
\begin{align*}
D_{s}X_{t}^{x;l^\epsilon}=I_{d}+\int_{\gamma^{\epsilon}_{s}}^{t}\nabla b(X_{r}^{x;l^\epsilon})D_{s}X_{r}^{x;l^\epsilon}\,\dif r.
\end{align*}
Now, we consider the tangent flow $\nabla_{x} X_{t}^{x;l^\epsilon}$, which verifies the equation
\begin{align*}
\nabla_{x} X_{t}^{x;l^\epsilon}&=I_{d}+\int_{0}^{t}\nabla b(X_{r}^{x;l^\epsilon})\nabla_{x}X_{r}^{x;l^\epsilon}\,\dif r\\
&=\nabla_{x}X_{\gamma^{\epsilon}_{s}}^{x;l^\epsilon}+\int_{\gamma^{\epsilon}_{s}}^{t}\nabla b(X_{r}^{x;l^\epsilon})\nabla_{x}X_{r}^{x;l^\epsilon}\,\dif r.
\end{align*}
It is clear that $\nabla_{x}X_{\gamma^{\epsilon}_{s}}^{x;l^\epsilon}$ is invertible, so it holds that
\begin{align*}
D_{s}X_{t}^{x;l^\epsilon}=\nabla_{x} X_{t}^{x;l^\epsilon}(\nabla_{x}X_{\gamma^{\epsilon}_{s}}^{x;l^\epsilon})^{-1}.
\end{align*}
Using the chain rule, for $\phi\in \mathcal{C}_{b}^{1}(\rd,\real)$,
\begin{align*}
D_{s}\phi(X_{t}^{x;l^\epsilon})=\nabla\phi(X_{t}^{x;l^\epsilon})D_{s}X_{t}^{x;l^\epsilon}=\nabla\phi(X_{t}^{x;l^\epsilon})\nabla_{x} X_{t}^{x;l^\epsilon}(\nabla_{x}X_{\gamma^{\epsilon}_{s}}^{x;l^\epsilon})^{-1}.
\end{align*}
Hence, we have
\begin{align*}
\nabla\phi(X_{t}^{x;l^\epsilon})\nabla_{x} X_{t}^{x;l^\epsilon}=D_{s}\phi(X_{t}^{x;l^\epsilon})\nabla_{x}X_{\gamma^{\epsilon}_{s}}^{x;l^\epsilon}.
\end{align*}

Based on the above framework, one can derive the following Bismut--Elworthy--Li formula.
\begin{lemma}\label{B-E-L}
Let $G$ be a $d$-dimensional vector which is Malliavin differentiable. Then for any $f\in \mathcal{C}_{b}^{1}(\rd,\real)$,
\begin{align*}
\mathbb{E}[\nabla f(X_{t}^{x;l^\epsilon})\nabla_{x} X_{t}^{x;l^\epsilon}G]=\frac{1}{l^\epsilon_{t}-l^\epsilon_{0}}\,\mathbb{E}\left[f(X_{t}^{x;l^\epsilon})
\left(G\int_{l^\epsilon_{0}}^{l^\epsilon_{t}}\nabla_{x}X_{\gamma^{\epsilon}_{s}}^{x;l^\epsilon}\,\dif W_{s-l^\epsilon_{0}}+\int_{l^\epsilon_{0}}^{l^\epsilon_{t}}\nabla_{x}X_{\gamma^{\epsilon}_{s}}^{x;l^\epsilon}D_{s}G\,\dif s\right)\right].
\end{align*}
\end{lemma}
\begin{proof}
For any $s\in[l^\epsilon_{0},l^\epsilon_{t}]$ (this amounts to $t\geq\gamma^{\epsilon}_{s}$), we have
\begin{align*}
\mathbb{E}\left[\nabla f(X_{t}^{x;l^\epsilon})\nabla_{x} X_{t}^{x;l^\epsilon}G\right]=\mathbb{E}\left[D_{s}f(X_{t}^{x;l^\epsilon})\nabla_{x}X_{\gamma^{\epsilon}_{s}}^{x;l^\epsilon}G\right].
\end{align*}
Integrating over $s\in[l^\epsilon_{0},l^\epsilon_{t}]$, one can derive from the duality relation \cite[(1.42)]{Nua06} that
\begin{align*}
\mathbb{E}\left[\nabla f(X_{t}^{x;l^\epsilon})\nabla_{x} X_{t}^{x;l^\epsilon}G\right]&=\frac{1}{l^\epsilon_{t}-l^\epsilon_{0}}\int_{l^\epsilon_{0}}^{l^\epsilon_{t}}\mathbb{E}
\left[D_{s}f(X_{s}^{x;l^\epsilon})\nabla_{x}X_{\gamma^{\epsilon}_{s}}^{x;l^\epsilon}G\right]\dif s\\
&=\frac{1}{l^\epsilon_{t}-l^\epsilon_{0}}\,\mathbb{E}\left[f(X_{t}^{x;l^\epsilon})
\int_{l^\epsilon_{0}}^{l^\epsilon_{t}}\nabla_{x}X_{\gamma^{\epsilon}_{s}}^{x;l^\epsilon}G\,\tilde{\dif}W_{s-l^\epsilon_{0}} \right],
\end{align*}
where $\tilde{\dif}W_{s-l^\epsilon_{0}}$ denotes the Skorohod integral (see \cite[Subsection 1.3.2]{Nua06}).
Recall the following well-known formula (cf.\ \cite[(1.56)]{Nua06})
\begin{align*}
\int_{l^\epsilon_{0}}^{l^\epsilon_{t}}\nabla_{x}X_{\gamma^{\epsilon}_{s}}^{x;l^\epsilon}G\,\tilde{\dif}W_{s-l^\epsilon_{0}}
=G\int_{l^\epsilon_{0}}^{l^\epsilon_{t}}\nabla_{x}X_{\gamma^{\epsilon}_{s}}^{x;l^\epsilon}\,\tilde{\dif} W_{s-l^\epsilon_{0}}+\int_{l^\epsilon_{0}}^{l^\epsilon_{t}}\nabla_{x}X_{\gamma^{\epsilon}_{s}}^{x;l^\epsilon}D_{s}G\,\dif s.
\end{align*}
Noting that $X_{\gamma^{\epsilon}_{s}}^{x;l^\epsilon}$ depends on $W_{r}$ with $r\leq s-l^\epsilon_{0}$, it follows that $s\mapsto\nabla_{x}X_{\gamma^{\epsilon}_{s}}^{x;l^\epsilon}$ is $\mathcal{F}_{s-l^\epsilon_{0}}^{\mathds{W}}$-adapted and consequently the Skorohod integral coincides with the It\^{o} integral (see \cite[Subsection 1.3.3]{Nua06}). Then the desired result follows.
\end{proof}

Now, we can use the above Bismut-Elworthy-Li formula to prove the following lemma.
\begin{lemma}\label{supplelem}
    Under \textbf{Assumption A}, for all $\phi\in \mathcal{C}_{b}^{2}(\rd,\real)$, $x\in\rd$ and $t\in(0,1]$,
    \begin{align*}
    \left\|\Ee \left[\nabla^{2}_x
    \phi\bigl(X_{t}^{x;l}\bigr)(x)\right]\right\|_{{\rm HS}}\leq C\|\nabla\phi\|_{\infty}\left(
    1+\frac{1}{ \sqrt{l_{t}}}
    \right).
    \end{align*}
\end{lemma}
\begin{proof}
By the chain rule, for $i,j=1,\dots,d$,
$$
\partial_{j}\partial_{i}
    \phi\bigl(X_{t}^{x;l^{\epsilon}}\bigr)
    =\sum_{k=1}^{d}\partial_{k}\phi\bigl(X_{t}^{x;l^{\epsilon}}\bigr)\partial_{j}\partial_{i}X_{t;k}^{x;l^{\epsilon}}+
    \sum_{p=1}^{d}\sum_{k=1}^{d}\partial_{p}\partial_{k}
    \phi\bigl(X_{t}^{x;l^{\epsilon}}\bigr)\partial_{j}X_{t;p}^{x;l^{\epsilon}}\partial_{i}
    X_{t;k}^{x;l^{\epsilon}},
$$
where $X_{t;k}^{x;l^{\epsilon}}$ is the $k$-th component of the $d$-dimensional vector $X_{t}^{x;l^{\epsilon}}$, $k=1,2,\dots,d$.
Using Lemma \ref{B-E-L} with $d=1$, $G=\partial_{i}X_{t;k}^{x;l^{\epsilon}}$ and $f=\partial_{k}\phi$, we have
\begin{align*}
&\mathbb{E}\left[\partial_{p}\partial_{k}\phi\bigl(X_{t}^{x;l^{\epsilon}}\bigr)\partial_{j}X_{t;p}^{x;l^{\epsilon}}\partial_{i}X_{t;k}^{x;l^{\epsilon}}\right]\\
&\qquad=\frac{1}{l^\epsilon_{t}-l^\epsilon_{0}}\,\mathbb{E}\left[\partial_{k}\phi(X_{t}^{x;l^\epsilon})
\left(\partial_{i}X_{t;k}^{x;l^{\epsilon}}\int_{l^\epsilon_{0}}^{l^\epsilon_{t}}\partial_{j}X_{\gamma^{\epsilon}_{s};p}^{x;l^\epsilon}
\,\dif W_{s-l^\epsilon_{0};p}
+\int_{l^\epsilon_{0}}^{l^\epsilon_{t}}\partial_{j}X_{\gamma^{\epsilon}_{s};p}^{x;l^\epsilon}D_{s}\partial_{i}X_{t;k}^{x;l^{\epsilon}}\,\dif s\right)\right],
\end{align*}
where $W_{s-l^\epsilon_{0};p}$ is the $p$-th component of the $d$-dimensional Brownian motion $W_{s-l^\epsilon_{0}}$, $p=1,\dots,d$.
Then we obtain that
\begin{align*}
&\left|\Ee \left[\partial_{j}\partial_{i}
    \phi\bigl(X_{t}^{x;l^{\epsilon}}\bigr)\right]\right|
\leq\|\nabla\phi\|_{\infty}\sum_{k=1}^{d}\mathbb{E}\left|\partial_{j}\partial_{i}X_{t;k}^{x;l^{\epsilon}}\right|\\
&\qquad\qquad+
\frac{\|\nabla\phi\|_{\infty}}{l^\epsilon_{t}-l^\epsilon_{0}}
\sum_{p=1}^{d}\sum_{k=1}^{d}\mathbb{E}\left[\left|\partial_{i}X_{t;k}^{x;l^{\epsilon}}
\int_{l^\epsilon_{0}}^{l^\epsilon_{t}}\partial_{j}X_{\gamma^{\epsilon}_{s};p}^{x;l^\epsilon}\,\dif W_{s-l^\epsilon_{0};p}\right|+\left|\int_{l^\epsilon_{0}}^{l^\epsilon_{t}}\partial_{j}
X_{\gamma^{\epsilon}_{s};p}^{x;l^\epsilon}D_{s}\partial_{i}X_{t;k}^{x;l^{\epsilon}}\,\dif s\right|\right].
\end{align*}
In addition, by a similar argument as in the proof of Lemma \ref{suppleSLS}, it is not hard to verify that for
any $t\in(0,1]$, $s\in[l^\epsilon_{0},l^\epsilon_{t}]$ and $i,j,k=1,\dots,d$, the derivatives
$\partial_{j}\partial_{i}X_{t;k}^{x;l^{\epsilon}}$,
$\partial_{i}X_{t;k}^{x;l^{\epsilon}}$ and $D_{s}\partial_{i}X_{t;k}^{x;l^{\epsilon}}$ are all bounded.
Then it follows from the Cauchy--Schwarz inequality and It\^{o}'s isometry that
\begin{align*}
\left|\Ee \left[\partial_{j}\partial_{i}
    \phi\bigl(X_{t}^{x;l^{\epsilon}}\bigr)\right]\right|
    &\leq C\|\nabla\phi\|_{\infty}+\frac{C\|\nabla\phi\|_{\infty}}{l^\epsilon_{t}-l^\epsilon_{0}}
    \left(
    \sqrt{l^\epsilon_{t}-l^\epsilon_{0}}+l^\epsilon_{t}-l^\epsilon_{0}
    \right)\\
    &\leq C\|\nabla\phi\|_{\infty}\left(
    1+\frac{1}{ \sqrt{l^\epsilon_{t}-l^\epsilon_{0}}}
    \right),
\end{align*}
which further implies
\begin{align*}
\left\|\Ee \left[\nabla^{2}_{x}
    \phi\bigl(X_{t}^{x;l^{\epsilon}}\bigr)\right]\right\|_{{\rm HS}}\leq C\|\nabla\phi\|_{\infty}\left(
    1+\frac{1}{ \sqrt{l^\epsilon_{t}-l^\epsilon_{0}}}
    \right).
\end{align*}
By the same argument as in the proof of \cite[Lemma 2.2]{Z}, one can prove that
\begin{equation}
\begin{aligned}
    \lim_{\epsilon\downarrow0}\Ee \left[\nabla^{2}_{x}
    \phi\bigl(X_{t}^{x;l^{\epsilon}}\bigr)\right]
    = \Ee \left[\nabla^{2}_{x}
    \phi\bigl(X_{t}^{x;l}\bigr)\right].
\end{aligned}
\end{equation}
It remains to let $\epsilon\downarrow0$ to get the desired estimate.
\end{proof}

With the help of Lemma \ref{supplelem}, we can prove Lemma \ref{regular} as in the proof
presented in Subsection \ref{suppleimp}.
}
\end{appendix}

\noindent
\textbf{Acknowledgement.} We would like to gratefully thank the associate editor and the two anonymous referees for their very
helpful and constructive comments. The research of L.\ Xu is supported in part by NSFC No.\ 12071499,  Macao S.A.R.\ grant FDCT  0090/2019/A2 and University of Macau grant  MYRG2020-00039-FST. R.L.\ Schilling was supported through the joint Polish--German NCN--DFG `Beethoven 3' grant (NCN 2018/31/G/ST1/02252; DFG SCHI 419/11-1). C.-S.\ Deng is supported by
Natural Science Foundation of Hubei Province of China (2022CFB129).
P.\ Chen is supported by the NSF of Jiangsu Province grant BK20220867 and the Initial Scientific Research Fund of Young Teachers in Nanjing University of Aeronautics and Astronautics (1008-YAH21111).

\bibliographystyle{amsplain}

\begin{thebibliography}{99}\frenchspacing

\bibitem{BT96}
V.\ Bally and D.\ Talay (1996):
The law of the Euler scheme for stochastic differential equations.
\textit{Probability Theory and Related Fields}  \textbf{104}, 43--60.

\bibitem{BF75}
C.\ Berg and G.\ Forst (1975):
\textit{Potential Theory on Locally Compact Abelian Groups}.
Springer, Ergebnisse der Mathematik und ihrer Grenzgebiete. II.~Ser.\ Bd.~\textbf{87}, Berlin.

\bibitem{BG60}
R.M.\ Blumenthal and R.K.\ Getoor (1960):
Some theorems on stable processes. \textit{Transactions of the American Mathematical Society} \textbf{95}, 263--273.


\bibitem{BSW}
B.\ B\"{o}ttcher, R. L.\ Schilling and J.\ Wang (2013):
\textit{L\'evy-Type Processes: Construction, Approximation and Sample Path Properties}.
Lecture Notes in Mathematics \textbf{2099}, L\'evy Matters III, Springer, Cham.

\bibitem{CMS76}
J.M.\ Chambers, C.L.\ Mallows and B.\ Stuck (1976):
A method for simulating stable random variables. \emph{Journal of the American Statistical Association} \textbf{71}, 340--344.

\bibitem{CNXY19} P.\ Chen, I.\ Nourdin, L.\ Xu and X.\ Yang (2019):
Multivariate stable approximation in Wasserstein distance by Stein's method. Preprint \textit{arXiv}: 1911.12917.


\bibitem{CLX21}
P.\ Chen, J.\ Lu and L.\ Xu (2022):
Approximation to stochastic variance reduced gradient Langevin dynamics by stochastic delay differential equations.
\emph{Applied Mathematics \& Optimization} \textbf{85}, 1--40.

\bibitem{CX19}
P.\ Chen and L.\ Xu (2019):
Approximation to stable law by the Lindeberg principle.
\textit{Journal of Mathematical Analysis and Applications} \textbf{480}, 123338.


\bibitem{DG20}
K.\ Dareiotis and M.\ Gerencs (2020):
On the regularisation of the noise for the Euler-Maruyama scheme with irregular drift.
\textit{Electronic Journal of Probability}  \textbf{25}, 1--18.

\bibitem{DSS17}
C.-S.\ Deng, R.L.\ Schilling and Y.-H.\ Song (2017):
Subgeometric rates of convergence for Markov processes under subordination.
\textit{Advances in Applied Probability} \textbf{49}, 162--181.

\bibitem{EG19}
A.\ Eberle and A.\ Guillin (2019):
Couplings and quantitative contraction rates for Langevin dynamics.
\textit{The Annals of Probability}  \textbf{47}, 1982--2010.

\bibitem{EGZ19}
A.\ Eberle, A.\ Guillin and R.\ Zimmer (2019):
Quantitative Harris-type theorems for diffusions and McKean-Vlasov processes.
\textit{Transactions of the American Mathematical Society}  \textbf{371}, 7135--7173.

\bibitem{FSX19}
X.\ Fang, Q.-M.\ Shao and L.\ Xu (2019):
Multivariate approximations in Wasserstein distance by Stein's method and Bismut's formula.
\textit{Probability Theory and Related Fields} \textbf{174}, 945--979.

\bibitem{FG16}
W.\ Fang and M.B.\ Giles (2016):
Adaptive Euler-Maruyama method for SDEs with non-globally Lipschitz drift.
In: \textit{International Conference on Monte Carlo and Quasi-Monte Carlo Methods in Scientific Computing}. Springer, Cham, pp.\ 217--234.

\bibitem{GW17}
Y.\ Guan and J.\ Wu (2017):
Exponential ergodicity for non-Lipschitz multivalued stochastic differential equations with L\'evy jumps.
\textit{Infinite Dimensional Analysis, Quantum Probability and Related Topics}  \textbf{20}, 1750002.

\bibitem{Hal81}
P.\ Hall (1981):
Two-sided bounds on the rate of convergence to a stable law.
\emph{Zeitschrift fur Wahrscheinlichkeitstheorie und Verwandte Gebiete} \textbf{57}, 349--364.

\bibitem{Jac04}
J.\ Jacod (2004):
The Euler scheme for L\'evy driven stochastic differential equations: limit theorems.
\textit{The Annals of Probability}  \textbf{32}, 1830--1872.

\bibitem{JPXX21}
X.\ Jin, G.\ Pang, L.\ Xu and X.\ Xu (2021):
An approximation to steady-state of $M/Ph/n+ M$ queue.
Preprint \textit{arXiv}:2109.03623.

\bibitem{JMW96}
A.\ Janicki, Z.\ Michna and A.\ Weron (1996):
Approximation of stochastic differential equations driven by $\alpha$-stable L\'evy motion.
\textit{Applicationes Mathematicae} \textbf{24}, 149--168.

\bibitem{KS19}
F.\ K\"{u}hn and R.L.\ Schilling (2019):
Strong convergence of the Euler-Maruyama approximation for a class of L\'evy-driven SDEs.
\textit{Stochastic Processes and their Applications}  \textbf{129}, 2654--2680.

\bibitem{LW20}
M.\ Liang and J.\ Wang (2020):
Gradient estimates and ergodicity for SDEs driven by multiplicative L\'evy noises via coupling.
\textit{Stochastic Processes and their Applications}  \textbf{130}, 3053--3094.

\bibitem{Lem07}
V.\ Lemaire (2007):
An adaptive scheme for the approximation of dissipative systems.
\textit{Stochastic Processes and their Applications}  \textbf{117}, 1491--1518.


\bibitem{LMYY18}
X.\ Li, Q.\ Ma, H.\ Yang and C.\ Yuan (2018):
The numerical invariant measure of stochastic differential equations with Markovian switching.
\textit{SIAM Journal on Numerical Analysis}  \textbf{56}, 1435--1455.

\bibitem{LTX20}
J.\ Lu, Y.\ Tan and L.\ Xu (2020):
Central limit theorem and self-normalized Cram\'er-type moderate deviation for Euler-Maruyama scheme.
\emph{Bernoulli} \textbf{28}, 937--964.

\bibitem{LW19}
D.\ Luo and J.\ Wang: (2019):
Refined basic couplings and Wasserstein-type distances for SDEs with L\'evy noises.
\textit{Stochastic Processes and their Applications}  \textbf{129}, 3129--3173.

\bibitem{M-T2}
S.P.\ Meyn and R.L.\ Tweedie (1992):
Stability of Markovian processes I: Criteria for discrete-time chains.
\textit{Advances in Applied Probability}  \textbf{24}, 542--574.

\bibitem{M-T3}
S.P.\ Meyn and R.L.\ Tweedie (1993):
Stability of Markovian processes III: Foster-Lyapunov criteria for continuous-time processes.
\textit{Advances in Applied Probability}  \textbf{25}, 518--548.

\bibitem{MX18}
R.\ Mikulevi\v{c}ius and F.\ Xu (2018):
On the rate of convergence of strong Euler approximation for SDEs driven by L\'evy processes.
\textit{Stochastics}  \textbf{90}, 569--604.

\bibitem{MN94}
R.\ Modarres and J.P.\ Nolan (1994):
A method for simulating stable random vectors.
\emph{Computational Statistics} \textbf{9}, 11--19.

\bibitem{NSR19}
T. H.\ Nguyen, U.\ Simsekli and G.\ Richard (2019):
Non-asymptotic analysis of Fractional Langevin Monte Carlo for non-convex optimization.
In: \textit{International Conference on Machine Learning}. Proceedings of Machine Learning Research, pp.\ 4810--4819.



\bibitem{Nol08}
J.P.\ Nolan (2008):
An overview of multivariate stable distributions.
Online: \url{http://hdl.handle.net/1961/auislandora:68717} (accessed January 12, 2023).

\bibitem{Nolbook}
J.P.\ Nolan (2020): \textit{Univariate stable distributions: models for heavy tailed data}. Springer Series in Operations
Research and Financial Engineering, Springer, Cham.




\bibitem{Nor86}
J.\ Norris (1986):
Simplified Malliavin calculus.
In: \textit{S\'{e}minaire de Probabilit\'{e}s XX 1984/85}.
Springer, Berlin, pp.\ 101--130.

\bibitem{Nua06} D.\ Nualart (2006): \textit{The Malliavin Calculus and Related Topics} (2nd ed). Springer, Berlin.

\bibitem{PP20}
G. Pag\`es and F.\ Panloup (2020):
Unadjusted Langevin algorithm with multiplicative noise: Total variation and Wasserstein bounds.
Preprint \textit{arXiv}:2012.14310.


\bibitem{Pan08}
F.\ Panloup (2008):
Recursive computation of the invariant measure of a stochastic differential equation driven by a L\'{e}vy process.
\textit{The Annals of Applied Probability}  \textbf{18}, 379--426.


\bibitem{PT17}
O.M.\ Pamen and D.\ Taguchi (2017):
Strong rate of convergence for the Euler-Maruyama approximation of SDEs with H\"older continuous drift coefficient.
\textit{Stochastic Processes and their Applications}  \textbf{127}, 2542--2559.

\bibitem{Pro04}
P.E.\ Protter (2004):
\textit{Stochastic Integration and Differential Equations} 2nd ed. Springer, Berlin.

\bibitem{PT97}
P.\ Protter and D.\ Talay (1997):
The Euler scheme for L\'evy driven stochastic differential equations.
\textit{The Annals of Probability}  \textbf{25}, 393--423.



\bibitem{SZ21}
J.M.\ Sanz-Serna and K.C.\ Zygalakis (2021):
Wasserstein distance estimates for the distributions of numerical approximations to ergodic stochastic differential equations.
\textit{Journal of Machine Learning Research}  \textbf{22}, 1--37.

\bibitem{sato}
K.\ Sato (1999):
\textit{L\'evy processes and infinitely divisible distributions}.
Cambridge University Press, Cambridge.

\bibitem{Sha18}
J.\ Shao (2018):
Weak convergence of Euler-Maruyama's approximation for SDEs under integrability condition.
Preprint \textit{arXiv}:1808.07250.

\bibitem{SSG19}
U.\ Simsekli, L.\ Sagun and M.\ Gurbuzbalaban (2019):
A tail-index analysis of stochastic gradient noise in deep neural networks.
In: \textit{International Conference on Machine Learning}. Proceedings of Machine Learning Research, pp. 5827--5837.

\bibitem{Tal90}
D.\ Talay (1990):
Second-order discretization schemes of stochastic differential systems for the computation of the invariant law.
\textit{Stochastics: An International Journal of Probability and Stochastic Processes}  \textbf{29}, 13--36.

\bibitem{TA18}
B.\ Tarami and M.\ Avaji (2018):
Convergence of Euler-Maruyama Method for Stochastic Differential Equations Driven by $\alpha$-stable L\'evy Motion.
\textit{Journal of Mathematical Extension} \textbf{12}, 31--53.

\bibitem{Wan20}
F. Y.\ Wang (2020):
Exponential contraction in Wasserstein distances for diffusion semigroups with negative curvature.
\textit{Potential Analysis}  \textbf{53}, 1123--1144.

\bibitem{W08}
J.\ Wang (2008):
Criteria for ergodicity of L\'evy type operators in dimension one.
\textit{Stochastic processes and their applications}  \textbf{118}, 1909-1928.

\bibitem{Wan16}
J.\ Wang (2016):
$L^{p}$-Wasserstein distance for stochastic differential equations driven by L\'evy processes.
\textit{Bernoulli}  \textbf{22}, 1598--1616.

\bibitem{Xu19}
L.\ Xu (2019):
Approximation of stable law in Wasserstein-1 distance by Stein's method.
\textit{Annals of Applied Probability} \textbf{29}, 458--504.


\bibitem{Z}
X.\ Zhang (2013):
Derivative formulas and gradient estimates for SDEs driven by $\alpha$-stable processes.
\textit{Stochastic Processes and their Applications}  \textbf{123}, 1213--1228.

\bibitem{ZFM20}
P.\ Zhou, J.\ Feng, C.\ Ma, C.\ Xiong, S.C.H.\ Hoi (2020):
Towards theoretically understanding why sgd generalizes better than adam in deep learning.
\textit{Advances in Neural Information Processing Systems} \textbf{33}, 21285--21296.

\end{thebibliography}

\end{document}